\documentclass{article}
\usepackage{mathrsfs}
\usepackage[utf8]{inputenc}
\usepackage[letterpaper,margin=1.0in]{geometry}

\usepackage{amsmath}\allowdisplaybreaks
\usepackage{amsfonts,bm}
\usepackage{amssymb}

\def\eqref#1{equation~(\ref{#1})}

\def\1{\bf{1}}

\usepackage{amsthm}
\theoremstyle{plain}

\makeatletter
\def\Ddots{\mathinner{\mkern1mu\raise\p@
\vbox{\kern7\p@\hbox{.}}\mkern2mu
\raise4\p@\hbox{.}\mkern2mu\raise7\p@\hbox{.}\mkern1mu}}
\makeatother

\makeatletter
\newcommand*{\rom}[1]{\expandafter\@slowromancap\romannumeral #1@}
\makeatother

\usepackage{color}
\usepackage{hyperref}
\usepackage{enumerate}
\usepackage{enumitem}
\usepackage{algorithm}
\usepackage{algorithmic}
\usepackage[numbers,sort,compress]{natbib}
\usepackage{url}
\usepackage{microtype}
\usepackage{algorithm}
\usepackage{algorithmic}
\usepackage{hyperref}
\usepackage{url}
\usepackage{microtype}
\usepackage{enumitem}
\usepackage{graphicx}
\usepackage{xspace}
\usepackage{booktabs} 
\usepackage{makecell}

\theoremstyle{plain}
\newtheorem{theorem}{Theorem}[section]

\newtheorem{lemma}[theorem]{Lemma}

\theoremstyle{definition}
\newtheorem{definition}[theorem]{Definition}
\newtheorem{assumption}[theorem]{Assumption}
\newtheorem{remark}[theorem]{Remark}

\title{Decentralized Non-convex Stochastic Optimization with Heterogeneous Variance}
\author{
Hongxu Chen, Ke Wei, and Luo Luo \\[0.5em]
School of Data Science, Fudan University, Shanghai, China.}

\begin{document}

\maketitle

\begin{abstract}
Decentralized optimization is critical for solving large-scale machine learning problems over distributed networks, where multiple nodes collaborate through local communication. In practice, the variances of stochastic gradient estimators often differ across nodes, yet their impact on algorithm design and complexity remains unclear. To address this issue, we propose D-NSS, a decentralized algorithm with node-specific sampling, and establish its sample complexity depending on the arithmetic mean of local standard deviations, achieving tighter bounds than existing methods that rely on the worst-case or quadratic mean. We further derive a matching sample complexity lower bound under heterogeneous variance, thereby proving the optimality of this dependence. Moreover, we extend the framework with a variance reduction technique and develop D-NSS-VR, which under the mean-squared smoothness assumption attains an improved sample complexity bound while preserving the arithmetic-mean dependence. Finally, numerical experiments validate the theoretical results and demonstrate the effectiveness of the proposed algorithms.
\end{abstract}

\section{Introduction}

With the increasing demand for data processing and computation, distributed optimization has become an essential tool for large-scale machine learning problems. 
In particular, decentralized optimization allows nodes to communicate with their neighbors, improving robustness and avoiding communication bottlenecks.
In this paper, we consider the following decentralized non-convex stochastic optimization problem:
\begin{align}
\label{eq_problem}
\min_{x \in \mathbb{R}^d} f(x) := \frac{1}{m} \sum_{i=1}^m f_i(x),
\end{align}
where $f_i(x) = \mathbb{E}_{\xi_i \sim \mathcal{D}_i} \left[ F(x; \xi_i) \right]$ is a possibly non-convex objective defined by the local data distribution $\mathcal{D}_i$ at node $i$ and $\xi_i$ is the corresponding random index.

Heterogeneity in the local data distributions $\mathcal{D}_i$ leads to differences in the variances of stochastic gradient estimators across nodes.
Specifically, for first-order methods applied to problem (\ref{eq_problem}), node $i$ can only access an unbiased stochastic gradient estimator $g_i(x; \xi_i)$ satisfying 
\begin{align}
\label{eq_g}
\mathbb{E}_{\xi_i} \left[ \|g_i(x; \xi_i) - \nabla f_i(x)\|^2 \right] \leq \sigma_i^2,
\end{align}
where $\sigma_i > 0$ may vary significantly across nodes due to the non-IID data distribution.
However, most existing decentralized optimization methods \cite{lian2017can,tang2018d,assran2019stochastic,yuan2022revisiting,lu2023decentralized} typically assume uniform variance across nodes, resulting in complexity bounds that depend on the worst-case standard deviation $\sigma_{\max}$.
Xin et al. \cite{xin2021improved,xin2021hybrid,xin2021stochastic} analyze node-wise noise and derive bounds based on the quadratic mean of standard deviations, $\bar{\sigma}_{\mathrm{QM}} := \sqrt{(\sigma_1^2 + \dots + \sigma_m^2)/m}$, but their algorithms still assume identical sample sizes across nodes.
Consequently, how variance heterogeneity affects the convergence and complexity of decentralized algorithms remains unclear, raising two fundamental questions:
\begin{itemize}
    \item[(i)] \textit{How to design efficient decentralized algorithms under heterogeneous variance?}
    \item[(ii)] \textit{What is the optimal sample complexity in this setting?}
\end{itemize}

To answer these questions, we investigate the problem from a sampling perspective and propose a more efficient decentralized algorithm with node-specific sampling, called D-NSS (Decentralized Optimization with Node-Specific Sampling), which achieves the sample complexity (i.e., the total number of stochastic gradient evaluations) of
\begin{align*}
O \left( \frac{\Delta L\, \bar{\sigma}_{\mathrm{AM}}^2}{\epsilon^4} + \frac{m \Delta L}{\epsilon^2} \right)
\end{align*}
for finding the $\epsilon$-stationary point, 
where $\bar{\sigma}_{\mathrm{AM}} = \frac{1}{m}\sum_{i=1}^m\sigma_i$ denotes the arithmetic mean of standard deviations.
This dependence is tighter than the worst-case or quadratic-mean bounds in previous works. In highly heterogeneous regimes such  $\bar{\sigma}_{\mathrm{QM}}= \Theta(\sqrt{m} \bar{\sigma}_{\mathrm{AM}})$), it yields an $O(m)$ improvement in sample complexity.
In addition, we establish a lower bound for decentralized optimization with heterogeneous variance, thereby proving the optimality of D-NSS.
Furthermore, under the mean-squared smoothness assumption, we incorporate variance reduction into the proposed scheme and develop D-NSS-VR, which achieves a sample complexity of
\begin{align*}
O \left( \frac{\Delta \bar{L} \bar{\sigma}_{\mathrm{AM}}}{\epsilon^3} + \frac{\bar{\sigma}_{\mathrm{AM}}^2}{\epsilon^2} +  \frac{ \sqrt{m}\Delta \bar{L}}{\epsilon^2} + m \right),
\end{align*}
where $\Delta$ denotes the initial optimality gap, $L$ and $\bar{L}$ are smoothness constants.

\subsection{Related Work}
Decentralized optimization has developed rapidly over the past decade. 
A classical method is decentralized gradient descent (DGD), which performs gradient descent on local objectives followed by one round of communication.
Under data heterogeneity, DGD requires diminishing step sizes to guarantee convergence, which results in slow convergence rates \cite{nedic2009distributed,yuan2016convergence}.
Gradient tracking techniques \cite{di2016next,pu2021distributed,qu2017harnessing,scutari2019distributed} improve convergence by enabling each node to track the global gradient through additional communication of gradient information.
\citet{shi2015extra} propose EXTRA, a first-order method that introduces a correction term to achieve exact convergence with constant step sizes.
\citet{nedic2017achieving} combine gradient tracking with inexact gradient methods and establish the linear convergence of the DIGing algorithm under strong convexity and smoothness over time-varying graphs. 
To further improve communication efficiency, Chebyshev-accelerated multi-step communication methods have been developed to achieve optimal communication complexity in decentralized optimization \cite{scaman2017optimal, kovalev2020optimal, ye2023multi}.

For decentralized non-convex stochastic optimization, \citet{lian2017can} show that decentralized stochastic gradient descent can find an $\epsilon$-stationary point (i.e., $\mathbb{E}[\|\nabla f(x)\|^2] \le \epsilon^2$) with a sample complexity of $O(\epsilon^{-4})$. The same complexity guarantees are obtained for primal-dual algorithms by \citet{yi2022primal}. Further improvements are obtained by incorporating multi-step communication techniques, as shown in the works of \citet{lu2021optimal, lu2023decentralized} and \citet{yuan2022revisiting}, resulting in nearly optimal sample and communication complexities.
When the stochastic gradients additionally satisfy the mean-squared smoothness condition, variance reduction methods yield improved sample complexity. 
\citet{sun2020improving} propose D-GET by integrating variance reduction with gradient tracking, while \citet{pan2020d} extend the SPIDER-SFO method \cite{fang2018spider} to the decentralized setting. \citet{xin2021hybrid} develop GT-HSGD, a single-loop method based on STORM \cite{cutkosky2019momentum}, and establish a network-independent $O(\epsilon^{-3})$ sample complexity.

However, most of these results rely on the assumption of uniform variance across nodes, and their complexity bounds depend on the worst-case variance $\sigma_{\max}$. Xin et al. \cite{xin2021improved,xin2021hybrid,xin2021stochastic} consider heterogeneous variance in decentralized stochastic optimization and derive an upper bound that depends on the quadratic mean of standard deviations $\bar{\sigma}_{\mathrm{QM}}$, while \citet{deng2025decentralized} obtain similar results in the context of manifold optimization. It remains an open problem whether more efficient algorithms can be designed under heterogeneous variance.

Moreover, theoretical lower bounds are essential for characterizing the performance limits of algorithms. 
For distributed optimization, \citet{arjevani2015communication} derive iteration lower bounds under deterministic convex setting. \citet{scaman2017optimal} analyze the lower complexity bounds for both centralized and decentralized algorithms in the smooth and strongly convex setting. 
In the non-convex case, \citet{arjevani2023lower} establish the complexity lower bound for single-machine non-convex stochastic optimization. \citet{lu2021optimal, lu2023decentralized} extend this result to the decentralized setting, though their analysis relies on a specific linear communication topology. \citet{yuan2022revisiting} construct more general network structures and prove lower bounds under broader conditions, including the special case where the objective satisfies the Polyak–Łojasiewicz (PL) condition. \citet{huang2022optimal} extend the analysis to the time-varying networks. 
In addition, \citet{huang2022lower} and \citet{he2023lower} investigate lower bounds under communication compression.

\section{D-NSS Algorithm}
In this section, we design D-NSS (Decentralized Optimization with Node-Specific Sampling), a sample-efficient algorithm under variance heterogeneity, and analyze its sample and communication complexities.

\subsection{Algorithm Design}
The design of the algorithm is motivated by formulating each iteration as a sample minimization problem with a target accuracy. If each node draws $B_i$ stochastic gradients per iteration, the mean squared error of the global averaged gradient estimator is given by
\begin{align}
\label{eq_2.1}
&\quad \mathbb{E} \left[ \left\| \frac{1}{m} \sum_{i=1}^m \frac{1}{B_i} \sum_{j=1}^{B_i} g_i(x^{(t)}; \xi_{ij}) - \nabla f(x^{(t)}) \right\|^2 \right] \notag\\
&= \frac{1}{m^2} \sum_{i=1}^m \frac{1}{B_i^2} \sum_{j=1}^{B_i} \mathbb{E} \left[ \left\| g_i(x^{(t)}; \xi_{ij}) - \nabla f_i(x^{(t)}) \right\|^2 \right] \notag \\
& \leq \frac{1}{m^2} \sum_{i=1}^m \frac{\sigma_i^2}{B_i},
\end{align}
where the last step holds by \eqref{eq_g}.
We desire to achieve the estimation accuracy of $\epsilon^2$ based on \eqref{eq_2.1}.
A simple choice for batch size setting is $B_i = \sigma_i^2 / (m\epsilon^2)$, yielding a total sample size of $\bar{\sigma}_{\mathrm{QM}}^2 / \epsilon^2$. Although \citet{xin2021stochastic} use a different choice with $B_i = \bar{\sigma}_{\mathrm{QM}}^2 / (m\epsilon^2)$, the total sample size per iteration remains the same. A natural question is how to minimize the total number of samples under the given accuracy. We formulate it as the following optimization problem:
\begin{align}
\label{eq_2.2}
\min_{B_i} \quad & \sum_{i=1}^m B_i \quad \text{s.t.} \quad \frac{1}{m^2} \sum_{i=1}^m \frac{\sigma_i^2}{B_i} \leq \epsilon^2, \quad B_i > 0.
\end{align}
It is a convex problem and its optimal solution can be characterized by the KKT conditions. To handle the inequality constraint, we introduce the Lagrange multiplier $\lambda \ge 0$ and define the Lagrangian as
\begin{align*}
\mathcal{L}(B,\lambda) = \sum_{i=1}^{m} B_i + \lambda \left( \sum_{i=1}^{m} \frac{\sigma_i^{2}}{B_i} - m^{2}\epsilon^{2} \right).
\end{align*}
By the dual feasibility and first-order optimality, we have
\begin{align}
\label{eq_Bstar}
B_i^{\star} = \frac{\sigma_i \sum_{j=1}^{m} \sigma_j}{m^2 \epsilon^2}, \qquad \text{where}\quad i = 1, \dots, m.
\end{align}
Therefore, the minimal total number of samples to achieve an $\epsilon$-accurate estimation is $\sum_{i=1}^m B_i^{\star} = \bar{\sigma}_{\mathrm{AM}}^2 / \epsilon^2$, which depends on the arithmetic mean of the standard deviations.

As discussed above, existing decentralized optimization algorithms \cite{lian2017can,yi2022primal,lu2021optimal, lu2023decentralized, yuan2022revisiting, xin2021stochastic} typically adopt uniform sampling across nodes, which ignores substantial variance heterogeneity and leads to suboptimal dependence in sample complexity.

\begin{algorithm}[t]
\caption{D-NSS}
\label{algo:d-nss}
\textbf{Input}: Initial point $x^0 \in \mathbb{R}^d$, step size $\eta > 0$, communication matrix $W$, communication rounds $R_t$ at iteration $t$, batch size $B_i$ at node $i$ \\
\textbf{Initialize}: $s_i^{-1} = y_i^{-1} = 0$ for all $i$
\begin{algorithmic}[1]
\FOR{$t = 0, 1, \dots, T - 1$}
  \FOR{$i = 1, \dots, m$ \textbf{in parallel}}
    \STATE Sample i.i.d. mini-batch $\mathcal{S}_i^t = \{\xi_{i,1}, \dots, \xi_{i,B_i}\}$
    \STATE $y_i^t = \frac{1}{B_i} \sum_{j=1}^{B_i} g_i(x_i^t; \xi_{i,j})$
    \STATE $s_i^t = \texttt{FastMix}(\{s_i^{t-1} + y_i^t - y_i^{t-1}\}_{i=1}^m, W, R_t)$
    \STATE $x_i^{t+1} = \texttt{FastMix}(\{x_i^t - \eta s_i^t\}_{i=1}^m, W, R_t)$
  \ENDFOR
\ENDFOR
\end{algorithmic}
\textbf{Output}: $x_{i,\text{out}}$ uniformly sampled from $\{x_i^0, x_i^1, \dots, x_i^T\}$
\end{algorithm}

\begin{algorithm}[t]
\caption{\texttt{FastMix}$(\{\phi_i\}_{i=1}^m, W, R)$}
\label{algo:fastmix}
\textbf{Input}: Initial value $\{\phi_i\}_{i=1}^m$, communication matrix $W$, number of rounds $R$ \\
\textbf{Initialize}: $\eta = \frac{1 - \sqrt{1 - \lambda_2^2(W)}}{1 + \sqrt{1 - \lambda_2^2(W)}}$,\nolinebreak\ $z_i^0 = z_i^{-1} = \phi_i$.
\begin{algorithmic}[1]
\FOR{$r = 0, 1, \dots, R - 1$}
  \STATE $z_i^{r+1} = (1 + \eta) \sum_{j=1}^m W_{ij} z_j^r - \eta z_i^{r-1}$
\ENDFOR
\end{algorithmic}
\textbf{Output}: $z_i^R$
\end{algorithm}

Based on the optimal sampling strategy given in \eqref{eq_Bstar}, we propose D-NSS, a decentralized algorithm that allocates node-specific batch sizes, described in Algorithm~\ref{algo:d-nss}. At iteration $t$, node $i$ holds a local variable $x_i^t \in \mathbb{R}^d$ and computes a stochastic gradient estimator $y_i^t$ using a mini-batch of size $B_i$, where $B_i$ is determined by the local gradient noise standard deviation $\sigma_i$.  The local variable $x_i^t$ is then updated using a gradient tracking variable $s_i^t$, which combines $y_i^t$ with information communicated from the neighboring nodes $\mathcal{N}_i$. Fast consensus is achieved through the multi-consensus step \texttt{FastMix} (Algorithm~\ref{algo:fastmix}). 
With this design, D-NSS attains the theoretically optimal sample complexity and is suited for decentralized settings with heterogeneous variance.

\subsection{Complexity Analysis of D-NSS}
To analyze the complexity of the D-NSS algorithm, we first present several standard assumptions in decentralized stochastic optimization.
\begin{assumption}
\label{ass_2.1}
The objective function $f$ is lower bounded, i.e.,
$\inf_{x\in{\mathbb R}^d} f(x) > -\infty$.
\end{assumption}

In addition, we denote the initial optimal function value gap by $\Delta = f(x^0) - \inf_{x\in{\mathbb R}^d} f(x)$, where $x^0$ is the initial point of the algorithm.
It holds $\Delta < +\infty$ under Assumption~\ref{ass_2.1}.

\begin{assumption}
\label{ass_2.2}
For each node $i$, the stochastic gradient $g_i(x, \xi)$ satisfies unbiasedness and bounded variance as
\begin{align*}
& \mathbb{E}_{\xi} [g_i(x; \xi)] = \nabla f_i(x) \quad\text{and} \quad \mathbb{E}_{\xi} \left[ ||g_i(x; \xi) - \nabla f_i(x)||^2 \right] \leq \sigma_i^2,
\end{align*}
where $\sigma_i > 0$.
\end{assumption}
Assumption~\ref{ass_2.2} captures the variance heterogeneity across nodes, which is central to the problem studied in this work.

\begin{assumption}
\label{ass_2.3}
The global objective $f$ is $L$-smooth, and each local function $f_i$ is $mL$-smooth, i.e., for all $x, y \in \mathbb{R}^d$, we have
\begin{align*}
\|\nabla f(x) - \nabla f(y)\|^2 \leq L^2 \|x - y\|^2,
\end{align*}
and for any node $i$ and $x, y \in \mathbb{R}^d$, we have
\begin{align*}
\|\nabla f_i(x) - \nabla f_i(y)\|^2 \leq m^2 L^2 \|x - y\|^2.
\end{align*}
\end{assumption}
\begin{remark} 
We emphasize our Assumption~\ref{ass_2.3} is weaker than the smoothness condition in existing works that requires every~$f_i$ satisfying
$\|\nabla f_i(x) - \nabla f_i(y)\|^2 \leq L^2\|x-y\|^2$ \cite{lu2021optimal, lu2023decentralized, yuan2022revisiting, xin2021stochastic}.
See Appendix~\ref{app_a} for details.
\end{remark}

Finally, we introduce the assumption on the communication matrix $W$, which is standard in the decentralized optimization \cite{ye2023multi,bai2024complexity}.
\begin{assumption}
\label{ass_2.4}
Let $W \in \mathbb{R}^{m \times m}$ be the communication matrix, satisfying:
\begin{itemize}
\item[(a)] $W$ is symmetric and element-wise nonnegative, with $W_{ij} \ne 0$ if and only if nodes $i$ and $j$ are connected or $i = j$;
\item[(b)] $0 \preceq W \preceq I$ and $W^\top \mathbf{1}_m = W \mathbf{1}_m = \mathbf{1}_m$; moreover, the null space of $(I - W)$ is $\operatorname{span}(\mathbf{1})$.
\end{itemize}
\end{assumption}
Assumption~\ref{ass_2.4} indicates that $1-\lambda_2(W) > 0$, where $\lambda_2(W)$ is the second largest eigenvalue of $W$.
We define $\chi := 1-\lambda_2(W)$.

Following theorem provides upper bounds on the sample and communication complexity required by Algorithm~\ref{algo:d-nss} to reach an $\epsilon$-stationary point.

\begin{theorem}
\label{thm_2.1}
Under Assumptions~\ref{ass_2.1}--\ref{ass_2.4}, consider Algorithm~\ref{algo:d-nss} with the following parameter choices:
\begin{align*}
& \eta = \frac{1}{2L}, \quad
B_i = \left\lceil \frac{16\sigma_i \sum_{j=1}^{m} \sigma_j}{m^2 \epsilon^2} \right\rceil, \quad 
T = \left\lceil \frac{32 \Delta L}{\epsilon^2} \right\rceil, \\
& R_0 = O \left( \frac{1}{\sqrt{\chi}} \log\left(\frac{m}{\epsilon}\right) \right), \;
\text{and} \; R_t = O \left( \frac{1}{\sqrt{\chi}} \log(m) \right),
\end{align*}
then the output of the algorithm is an $\epsilon$-stationary point satisfying
$\mathbb{E} \left[ \|\nabla f(x_{i,\mathrm{out}})\|^2 \right] \leq \epsilon^2$.
The sample complexity is upper bounded by
\begin{align*}
O \left( \frac{\Delta L \bar{\sigma}_{\mathrm{AM}}^2}{\epsilon^4} + \frac{m \Delta L}{\epsilon^2} \right),
\end{align*}
and the communication complexity is bounded by
\begin{align*}
\tilde{O} \left( \frac{\Delta L}{\sqrt{\chi} \epsilon^2} \right).
\end{align*}
\end{theorem}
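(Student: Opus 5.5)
The plan is the standard gradient-tracking-with-multi-consensus argument of \citet{ye2023multi,lu2023decentralized}, adapted so that only the averaged noise enters. Let $\bar x^t=\frac1m\sum_i x_i^t$, $\bar s^t$, $\bar y^t$ be the network averages, and write stacked matrices $X^t,S^t,Y^t\in\mathbb{R}^{m\times d}$. \texttt{FastMix} preserves averages, and by induction $\bar s^t=\bar y^t$ from the initialization $s^{-1}=y^{-1}=0$. Hence $\bar x^{t+1}=\bar x^t-\eta\bar y^t$. We also use the Chebyshev contraction of \texttt{FastMix}: $\|Z^R-\mathbf 1\bar z\|\le \rho_R\|Z^0-\mathbf 1\bar z\|$ with $\rho_R\le \sqrt{14}(1-\sqrt{\chi/2})^{R}$ (or a comparable bound). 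With $R_t=O(\chi^{-1/2}\log m)$ this gives $\rho:=\rho_{R_t}\le c/m$ for a small constant $c$.

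\textbf{Step 1 (descent on the average).} Apply $L$-smoothness of $f$ at $\bar x^t$:
\[
f(\bar x^{t+1})\le f(\bar x^t)-\tfrac{\eta}{2}\|\nabla f(\bar x^t)\|^2-\tfrac{\eta}{2}(1-L\eta)\|\bar y^t\|^2+\tfrac{\eta}{2}\|\bar y^t-\nabla f(\bar x^t)\|^2 .
\]
Split $\bar y^t-\nabla f(\bar x^t)$ into a noise part and a bias part. The noise part is $\frac1m\sum_i(y_i^t-\nabla f_i(x_i^t))$; conditionally its terms are independent across nodes, so by the computation in \eqref{eq_2.1} and the choice of $B_i$ its second moment is at most $\frac1{m^2}\sum_i\sigma_i^2/B_i\le\epsilon^2/16$. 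This is the only place the arithmetic mean enters. The bias part is $\frac1m\sum_i(\nabla f_i(x_i^t)-\nabla f_i(\bar x^t))$, and by Cauchy--Schwarz and the $mL$-smoothness of each $f_i$ it is bounded by $\frac1m\cdot m^2L^2\|X^t-\mathbf 1\bar x^t\|^2=mL^2\|X^t-\mathbf 1\bar x^t\|^2$.

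\textbf{Step 2 (consensus errors).} Set $E_x^t=\|X^t-\mathbf 1\bar x^t\|^2$ and $E_s^t=\|S^t-\mathbf 1\bar s^t\|^2$. The recursions give
\[
E_x^{t+1}\le 2\rho^2(E_x^t+\eta^2E_s^t),\qquad E_s^{t+1}\le 2\rho^2\bigl(E_s^t+\|Y^{t+1}-Y^t\|^2\bigr),
\]
where
\[
\mathbb E\|Y^{t+1}-Y^t\|^2\lesssim \sum_i\sigma_i^2/B_i+m^2L^2\bigl(E_x^{t+1}+E_x^t+m\eta^2\|\bar y^t\|^2\bigr).
\]
Because $\rho\le c/m$, the factor $\rho^2\cdot m^2L^2\eta^2$ is $O(c^2)$. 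Moreover $\rho^2\sum_i\sigma_i^2/B_i\le \rho^2m^2\epsilon^2/16=O(\epsilon^2)$, and $\rho^2m\eta^2\cdot m^2L^2\|\bar y^t\|^2$ is $O(c^2)\,m\|\bar y^t\|^2$. That last term must be paid for by the $-\frac{\eta}{4}\|\bar y^t\|^2$ in Step 1 after multiplication by $mL^2$, which is why the constant in $\rho\lesssim 1/m$ matters. We then build the potential $\Phi^t=f(\bar x^t)+a\,mL\,E_x^t+b\,m\eta^2L\,E_s^t$ with small constants $a,b$, and show
\[
\mathbb E\Phi^{t+1}\le\mathbb E\Phi^t-\tfrac{\eta}{4}\mathbb E\|\nabla f(\bar x^t)\|^2+O(\eta\epsilon^2).
\]
The initial terms $E_s^0$ involve $\|Y^0\|^2$, which can be as large as $\sum_i\|\nabla f_i(x^0)\|^2$ plus noise. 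The extra $\log(1/\epsilon)$ in $R_0$ makes $\rho_{R_0}^2 E_s^0\lesssim\epsilon^2/(mL^2\eta^2)$, so these terms contribute only $O(\Delta)$.

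\textbf{Step 3 (conclude).} Telescoping over $T$ iterations gives $\frac1T\sum_t\mathbb E\|\nabla f(\bar x^t)\|^2\le \frac{8\Delta}{\eta T}+O(\epsilon^2)$. We then transfer to local iterates via $\|\nabla f(x_i^t)\|^2\le2\|\nabla f(\bar x^t)\|^2+2L^2\|x_i^t-\bar x^t\|^2$, and the averaged $E_x^t$ is already controlled. With $\eta=1/(2L)$ and $T=\lceil32\Delta L/\epsilon^2\rceil$, and after tuning constants, the bound is $\le\epsilon^2$.

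For the sample count, $\sum_iB_i\le 16m^2\bar\sigma_{\mathrm{AM}}^2/(m^2\epsilon^2)+m$ per iteration; multiplying by $T$ gives the stated bound. Communication is $\sum_tR_t=\tilde O(T/\sqrt\chi)$.

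\textbf{Main obstacle.} The hard step is Step 2 under the weak $mL$-smoothness of each $f_i$. Every $\|Y^{t+1}-Y^t\|^2$ term carries an $m^2L^2$ factor, so the coupled recursion closes only if $\rho=O(1/m)$ with sufficiently small constants. The error terms must be arranged so that the noise enters only through $\rho^2\sum\sigma_i^2/B_i$ and is never multiplied by $m$ without a matching $\rho^2$.
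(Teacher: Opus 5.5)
Your plan follows the paper's proof closely: descent on $\bar x^t$, the noise/bias split giving $\frac{1}{m^2}\sum_i\sigma_i^2/B_i$, the coupled recursions for $E_x^t,E_s^t$, and the extra $\log(1/\epsilon)$ in $R_0$. Your potential is in fact the paper's $f(\bar x^t)+2mL^2\eta\,C(t)$ with $a=b=1$. However, your claim that the recursion closes once $\rho\le c/m$ (with $a,b$ constants) is false, because you dropped the factor $m$ carried by the weight on $E_s$.

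With $\eta=1/(2L)$ that weight is $b\,m\eta^2L=\Theta(bm\eta)$. The $\|\bar y^t\|^2$ part of $\|Y^{t+1}-Y^t\|^2$ therefore enters $\Phi^{t+1}$ as
\[
b\,m\eta^2L\cdot 2\rho^2\cdot\Theta(m^3L^2\eta^2)\,\|\bar y^t\|^2=\Theta(b\rho^2m^4)\,\eta\,\|\bar y^t\|^2 .
\]
Absorbing this into $-\tfrac{\eta}{4}\|\bar y^t\|^2$ requires $\rho\lesssim m^{-2}$, not $m^{-1}$.

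The noise term has the same problem. Your estimate $\rho^2\sum_i\sigma_i^2/B_i\le\rho^2m^2\epsilon^2/16$ is correct, but it gets multiplied by $\Theta(bm\eta)$. The per-step error is then $\Theta(b\rho^2m^3)\,\eta\epsilon^2$, which equals $\Theta(bc^2m)\,\eta\epsilon^2$ when $\rho=c/m$, not $O(\eta\epsilon^2)$. Bringing it back to $O(\eta\epsilon^2)$ would require enlarging every $B_i$ by a factor $m$, which destroys the $\bar\sigma_{\mathrm{AM}}^2$ bound.

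These two requirements are exactly the paper's conditions $\rho_t^2\le 1/(9m^4)$ and $\rho_t^2\le 1/(6m^3)$. That is why the paper takes $R_t=\lceil\frac{2+\sqrt2}{2\sqrt\chi}\log(14\max\{9m^4,70m^2,6m^3\})\rceil$.

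The repair is cheap and leaves the theorem intact. You have two options:
\begin{itemize}
\item Take $\rho_t\le 1/(3m^2)$, as the paper does; this still gives $R_t=O(\chi^{-1/2}\log m)$.
\item Keep $\rho\le c/m$ but shrink the $E_s$-weight to $b\asymp\rho^2$. That weight only has to dominate the term $2\rho^2\eta^2\cdot a\,mL\,E_s^t$ leaking out of the $E_x$-recursion. With $b=4a\rho^2$ and $c$ small, all coefficient conditions close, including the two above.
\end{itemize}

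One smaller point concerns constants. The theorem fixes $T=\lceil 32\Delta L/\epsilon^2\rceil$ and the constant $16$ in $B_i$, so you should not give up half of $-\tfrac{\eta}{2}\|\nabla f(\bar x^t)\|^2$. The paper keeps that term whole and absorbs every error term into $-\tfrac{\eta}{4}\|\bar y^t\|^2$ and the decrease of $C(t)$. With your bound $\tfrac{8\Delta}{\eta T}$, the factor-$2$ transfer to $x_i^t$ already uses up the entire $\epsilon^2$ budget.
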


In Table~\ref{table_1}, we compare the result with representative existing algorithms. Since all these methods achieve near-optimal (up to a logarithmic factor) communication complexity, we focus on their sample complexity. D-NSS achieves a dependence on the arithmetic mean of standard deviations $\bar{\sigma}_{\mathrm{AM}}$, which is tighter than the worst-case or quadratic-mean bounds in previous methods. Under significant variance heterogeneity, the parameters can satisfy the relation $\sigma_{\max} = \Theta(\sqrt{m} \, \bar{\sigma}_{\mathrm{QM}}) = \Theta(m \, \bar{\sigma}_{\mathrm{AM}})$. Therefore, in such scenarios, our D-NSS algorithm achieves an $O(m)$ improvement in sample efficiency over existing methods.

\section{Lower Bounds Under Heterogeneous Variance}
In the previous section, we established upper bounds for decentralized non-convex stochastic optimization under heterogeneous variance. 
Although the proposed algorithm achieves a sample complexity that depends on the arithmetic mean $\bar{\sigma}_{\mathrm{AM}}$, it remains unclear whether this dependence can be further reduced through more refined algorithmic designs. 
To address this, we investigate the lower bounds under heterogeneous variance. 
We begin by formally defining the class of algorithms.

\begin{definition}[Decentralized first-order algorithm class]
\label{def_3.1}
A decentralized first-order algorithm is defined over a network of nodes and satisfies the following constraints:
\begin{itemize}
\item \textbf{Local memory}: At time $t$, each node $i$ maintains a local memory $\mathcal{M}_{i,t} \subset \mathbb{R}^d$ that stores previously accessed or generated information. The memory is updated through either local computation or communication, i.e.,
\begin{align*}
\mathcal{M}_{i,t} \subseteq \mathcal{M}^{\text{comp}}_{i,t} \cup \mathcal{M}^{\text{comm}}_{i,t},
\end{align*}
where $\mathcal{M}^{\text{comp}}_{i,t}$ and $\mathcal{M}^{\text{comm}}_{i,t}$ represent the computational and communication memories, respectively.

\item \textbf{Local computation}: At time $t$, node $i$ can query a local first-order stochastic oracle to access $g_i(x; \xi_i)$ for any $x \in \mathcal{M}_{i,t-1}$. The computational memory is given by
\begin{align*}
\mathcal{M}^{\text{comp}}_{i,t} = \operatorname{Span} \left( \left\{ x,\, g_i(x; \xi_i) : x \in \mathcal{M}_{i,t-1} \right\} \right).
\end{align*}

\item \textbf{Local communication}: At time $t$, node $i$ can receive information from its neighbours $\mathcal{N}(i)$. The communication memory is defined as
\begin{align*}
\mathcal{M}^{\text{comm}}_{i,t} = \operatorname{Span} \left( \bigcup_{j \in \mathcal{N}(i)} \mathcal{M}_{j,t-\tau} \right),
\end{align*}
where $\tau < t$ denotes the communication delay parameter.

\item \textbf{Output value}: At time $t$, node $i$ must output one vector from its local memory as its current output, i.e., $x_i^t \in \mathcal{M}_{i,t}$.
\end{itemize}
\end{definition}

Establishing lower bounds for decentralized stochastic optimization under heterogeneous variance presents two key challenges.
First, existing results \cite{lu2021optimal, lu2023decentralized, yuan2022revisiting} rely on constructions in which all local objectives are identical, i.e., $f_1 = f_2 = \dots = f_m$. Such constructions are suitable for uniform-variance settings but fail to capture the difficulties introduced by heterogeneous variance. Motivated by lower bounds for finite-sum problems \cite{zhou2019lower}, we address this limitation by constructing orthogonal local functions that satisfy $\langle \nabla f_i(x), \nabla f_j(x) \rangle = 0$ for all $i \ne j$.
Second, heterogeneous variance breaks the symmetry across nodes, as algorithms typically allocate different numbers of samples to different nodes. This asymmetry introduces additional challenges in the analysis that are not addressed by existing lower bound techniques. Our construction explicitly incorporates this sample allocation asymmetry.
The detailed construction and proof are provided in the appendix.

\begin{table}[ht]
\centering
\caption{Comparison of sample complexity for first-order decentralized methods in the non-convex stochastic setting with heterogeneous variance.} \vskip0.2cm
\begin{tabular}{cc}
\hline
\textbf{Algorithm} & \textbf{Sample Complexity} \\
\hline\addlinespace
\makecell[c]{DeTAG \cite{lu2023decentralized}} &
$O\left( \dfrac{\Delta L \sigma_{\max}^2}{\epsilon^4} + \dfrac{m \Delta L}{\epsilon^2} \right)$ \\\addlinespace

\makecell[c]{MG-DSGD \cite{yuan2022revisiting}} &
$O\left( \dfrac{\Delta L \sigma_{\max}^2}{\epsilon^4} + \dfrac{m \Delta L}{\epsilon^2} \right)$ \\\addlinespace

\makecell[c]{GT-SA \cite{xin2021stochastic}} &
$O\left( \dfrac{\Delta L \bar{\sigma}_{\mathrm{QM}}^2}{\epsilon^4} + \dfrac{m \Delta L}{\epsilon^2} \right)$ \\\addlinespace

\makecell[c]{\textbf{D-NSS}\\\small Theorem~\ref{thm_2.1}} &
$O\left( \dfrac{\Delta L \bar{\sigma}_{\mathrm{AM}}^2}{\epsilon^4} + \dfrac{m \Delta L}{\epsilon^2} \right)$ \\\addlinespace

\hline\addlinespace
\makecell[c]{Lower Bound\\\small Theorem~\ref{thm_3.1}} &
$\Omega\left( \dfrac{\Delta L \bar{\sigma}_{\mathrm{AM}}^2}{\epsilon^4} + \dfrac{m \Delta L}{\epsilon^2} \right)$ \\\addlinespace
\hline
\end{tabular}
\label{table_1}
\end{table}

We state the main result as follows.
\begin{theorem}
\label{thm_3.1}
For any algorithm $\mathcal{A}$ satisfying Definition~\ref{def_3.1}, there exists a distributed objective function of the form $f(x) = \frac{1}{m} \sum_{i=1}^m f_i(x)$ with corresponding stochastic gradients satisfying Assumptions~\ref{ass_2.1}--\ref{ass_2.3}, such that, for sufficiently small $\epsilon$, the number of samples required to find an $\epsilon$-stationary point is at least
\begin{align*}
    \Omega\left( \frac{\Delta L \bar{\sigma}_{\mathrm{AM}}^2}{\epsilon^4} + \frac{m \Delta L}{\epsilon^2} \right).
\end{align*}
\end{theorem}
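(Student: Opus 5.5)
We prove the two terms of the lower bound separately and then take the maximum; $\max\{a,b\}\ge (a+b)/2$ gives the stated sum.

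\emph{The deterministic term $\Omega(m\Delta L/\epsilon^2)$.} We use the orthogonal construction inspired by \citet{zhou2019lower}. Let $\bar f_K:\mathbb{R}^K\to\mathbb{R}$ be the zero-chain function of \citet{arjevani2023lower}. It satisfies $\bar f_K(0)-\inf \bar f_K\le \gamma_0 K$ and is $\ell_1$-smooth. It has the progress property: if $\mathrm{prog}(x)<K$ then $\|\nabla \bar f_K(x)\|>1$, and $\nabla \bar f_K(x)$ reveals at most one new coordinate. Split $\mathbb{R}^d$ into $m$ orthogonal blocks of dimension $K$ and set
\begin{align*}
f_i(x)=\frac{\lambda^2 m}{\ell_1}\,\bar f_K\!\left(\frac{x^{(i)}}{\lambda}\right),
\end{align*}
where $x^{(i)}$ is the $i$-th block. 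Then $\nabla f_i$ is supported on block $i$, so $\langle\nabla f_i,\nabla f_j\rangle=0$ for $i\ne j$. Each $f_i$ is $mL$-smooth with $L=\lambda^0$-scaled appropriately. Because the blocks are orthogonal, $f=\frac1m\sum_i f_i$ is $L$-smooth, so Assumption~\ref{ass_2.3} holds. In this part the oracles are exact, so $\sigma_i$ can be anything.

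We also have $\|\nabla f(x)\|^2=\frac1{m^2}\sum_i\|\nabla f_i(x)\|^2$. If at least half of the blocks are unfinished, then $\|\nabla f(x)\|\gtrsim \lambda$. Choosing $\lambda\asymp\epsilon/L$ and $K\asymp \Delta L/(m\epsilon^2)$ gives the right gap $\Delta$. Only node $i$ can advance block $i$, and each oracle call advances it by at most one coordinate; moving vectors between nodes by communication does not create progress. Hence at least $mK/2=\Omega(m\Delta L/\epsilon^2)$ samples are needed.

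\emph{The stochastic term $\Omega(\Delta L\bar\sigma_{\mathrm{AM}}^2/\epsilon^4)$.} We keep the same block construction and equip node $i$ with the probabilistic zero-chain oracle of \citet{arjevani2023lower}. This oracle reveals the next coordinate only with probability $p_i$, and its variance is $\sigma_i^2\asymp (\text{scale of }\nabla f_i)^2(1-p_i)/p_i$. Since $\|\nabla f_i\|\asymp m\epsilon$ near unfinished points, we get $p_i\asymp m^2\epsilon^2/\sigma_i^2$. This is valid whenever $\sigma_i\gtrsim m\epsilon$, which holds for sufficiently small $\epsilon$. To place the chains on nodes we have two options: give the noise a block-dependent scale, or use a common $\lambda$ with per-node chain lengths $K_i$. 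We choose $K_i$ to reflect the asymmetric sample allocation.

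A union/Chernoff bound shows that completing block $i$ requires $\gtrsim K_i/p_i$ samples at node $i$ with constant probability. Moreover, $\|\nabla f(x)\|\le\epsilon$ forces a fixed constant fraction of the weight $\sum_i$ to be completed. The key optimisation is then: we must minimise $\sum_i K_i\sigma_i^2/(m^2\epsilon^2)$ subject to the requirement that a prescribed amount of progress be achieved, and the adversary (us) chooses the weights $K_i$, subject to $\sum_i K_i \asymp \Delta L/\epsilon^2$, to make this minimum large.

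Taking $K_i\propto \sigma_i^{-1}$-type weights reproduces the dual of problem~(\ref{eq_2.2}). The dual value is $\Delta L(\sum_j\sigma_j)^2/(m^2\epsilon^4)=\Delta L\bar\sigma_{\mathrm{AM}}^2/\epsilon^4$, mirroring the primal computation behind \eqref{eq_Bstar}. Equivalently, Cauchy--Schwarz gives $\sum_i B_i\cdot\sum_i \sigma_i^2/B_i\ge(\sum_i\sigma_i)^2$.

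The main obstacle is this asymmetry step. The algorithm may allocate samples to nodes adaptively, so we cannot fix a single node as the bottleneck. We must show that every allocation $(N_i)$ of total budget below the bound leaves, with constant probability, enough unfinished mass to keep $\|\nabla f\|>\epsilon$. We plan to handle this by the Cauchy--Schwarz/Lagrangian inequality applied to the expected progress $\sum_i \min(K_i, p_iN_i)$. A constant-probability concentration argument, per block and then over blocks via Markov's inequality, will convert this expected progress into a bound on $\mathbb{E}\|\nabla f\|^2$.
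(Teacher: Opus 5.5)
Your proposal has a genuine gap in the stochastic term, and a scaling slip that also breaks the deterministic term.

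\emph{The scaling slip.} Since $\|\nabla f\|^2=\frac{1}{m^2}\sum_i\|\nabla f_i\|^2$ and an unfinished block has $\|\nabla f_i\|\gtrsim mL\lambda$, having half the blocks unfinished gives $\|\nabla f\|\gtrsim\sqrt{m}\,L\lambda$, not $\lambda$. With your choice $\lambda\asymp\epsilon/L$, the gap constraint forces $K\asymp\Delta L/(m\epsilon^2)$. Then $mK/2\asymp\Delta L/\epsilon^2$, so the deterministic term loses its factor $m$. The paper instead takes $\lambda=2\epsilon\ell/(\sqrt{m}L)$ and $D=\lfloor\Delta L/(4\Delta_0\ell\epsilon^2)\rfloor$; this is where the $mL$-smoothness of the $f_i$ is used.

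\emph{The stochastic term.} Your parametrization there is the same one: $\|\nabla f_i\|\asymp m\epsilon$, $1/p_i\asymp\sigma_i^2/(m^2\epsilon^2)$, $\sum_iK_i\asymp\Delta L/\epsilon^2$. Then completing \emph{every} block costs only $\sum_iK_i\sigma_i^2/(m^2\epsilon^2)\le\Delta L\sigma_{\max}^2/(m^2\epsilon^4)$. This is smaller than your claimed dual value $\Delta L(\sum_j\sigma_j)^2/(m^2\epsilon^4)$, because a weighted average of the $\sigma_i^2$ cannot exceed $(\sum_j\sigma_j)^2$. When all $\sigma_i$ are equal it is a factor $m^2$ below the target.

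Correcting the scale to $\lambda\asymp\epsilon/(\sqrt m L)$ does not rescue a \emph{common} $\lambda$. Every unfinished block then has the same weight in $\|\nabla f\|^2$, so stationarity only requires completing a constant fraction of the blocks, and the minimizing allocation completes the cheap ones. For example, take your $K_i\propto\sigma_i^{-1}$ with $\sigma_1=M$ and $\sigma_j=\delta\ll M/m$ for $j\ge 2$. Completing the $\delta$-blocks costs $O(\Delta L\delta^2/\epsilon^4)\ll\Delta L\bar\sigma_{\mathrm{AM}}^2/\epsilon^4$. The Cauchy--Schwarz inequality you cite describes the algorithm's optimal batch design; it does not by itself produce a hard instance.

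\emph{The missing idea.} You need to build $\sigma_i$ into each block's weight. The paper takes $\lambda_i=\frac{2\epsilon\ell}{\sqrt{m}L}\sqrt{\sigma_i/\bar\sigma_{\mathrm{AM}}}$ and $1/p_i=\frac{\sigma_i\bar\sigma_{\mathrm{AM}}}{4ma^2\epsilon^2}+1$. Then the variance is exactly $\sigma_i^2$, and $\|\nabla f\|^2=\frac{4\epsilon^2}{m}\sum_i\frac{\sigma_i}{\bar\sigma_{\mathrm{AM}}}\|\nabla F_{D_i}(U_ix/\lambda_i)\|^2$. Moreover $\sigma_ip_i$ is essentially the same for all nodes, so removing a unit of gradient weight costs the same number of samples at every node and no allocation can cherry-pick.

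The paper then handles the asymmetric allocation as follows. It writes $\mathbb{E}[\#\mathrm{SFO}_i]=c_i\,\#\mathrm{SFO}$ and chooses $D_i\propto mc_i\bar\sigma_{\mathrm{AM}}/\sigma_i$, so node $i$ needs about $c_i\Delta L\bar\sigma_{\mathrm{AM}}^2/\epsilon^4$ samples. Markov's inequality and the zero-chain lemma (Lemma~\ref{lemma_2.3}), applied node by node, give $\mathbb{P}([U_ix]_{D_i}=0)\ge 1/4$ for every $i$. Linearity of expectation then yields $\mathbb{E}\|\nabla f\|^2>\epsilon^2$.

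With this weighting, your expected-progress plan also goes through with a common chain length $D$. One has $\mathbb{P}(\text{block } i \text{ completed})\le p_i\mathbb{E}[N_i]/D$. Because $\sigma_ip_i$ is uniform, $\sum_i\sigma_i\mathbb{P}(\text{block } i\text{ completed})$ is bounded by a multiple of the total expected sample count. This route also avoids letting the instance depend on the algorithm's allocation.
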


The result in Theorem~\ref{thm_3.1} shows that the D-NSS algorithm achieves the optimal sample complexity, and its dependence on the arithmetic mean $\bar{\sigma}_{\mathrm{AM}}$ is tight.
Moreover, since the construction of the communication lower bound does not involve stochastic gradients, the lower bound matches that in \citet{yuan2022revisiting} and is given by $\Omega (\epsilon^{-2} \Delta L/\sqrt{\chi})$.
Therefore, the D-NSS algorithm achieves nearly optimal communication complexity (up to a logarithmic factor).

\section{D-NSS-VR Algorithm}
In non-convex stochastic optimization, the optimal sample complexity is $O(\epsilon^{-4})$ under the standard bounded-variance assumption. When the mean-squared smoothness condition holds, this complexity can be improved to $O(\epsilon^{-3})$ \cite{arjevani2023lower}. A variety of variance-reduction methods, such as SARAH, SPIDER, PAGE, and their decentralized extensions, have been developed to attain this improved rate.

We have established a complexity bound that depends on the arithmetic mean $\bar{\sigma}_{\mathrm{AM}}$ in the general setting.
In contrast, decentralized variance-reduction methods involve more intricate structures, such as recursive gradient updates, nested inner–outer loops, and synchronization of local information, which require more delicate parameter tuning.
Whether a similar dependence on $\bar{\sigma}_{\mathrm{AM}}$ can be preserved within such a framework remains an open question.

In this section, we address this question by extending the sampling strategy to the variance-reduced setting. We propose D-NSS-VR (Decentralized Node-Specific Sampling with Variance Reduction), a decentralized algorithm that incorporates node-specific sampling into a SARAH-type variance reduction framework \cite{nguyen2017sarah}. We show that D-NSS-VR also achieves sample complexity that depends on the arithmetic mean $\bar{\sigma}_{\mathrm{AM}}$.

\subsection{Algorithm Design}

\begin{algorithm}[ht]
\caption{D-NSS-VR}
\label{algo:d-nss-vr}
\textbf{Input}: Initial point $x^0 \in \mathbb{R}^d$, step size $\eta > 0$, communication matrix $W$, communication rounds $R_t$ at iteration $t$, batch sizes $B_i$ and $b$, probability parameters $p, q \in (0,1]$ 
\begin{algorithmic}[1]
\STATE For all $i$, sample large batch $\mathcal{S}_i^0 = \{\xi_{i,1}, \dots, \xi_{i,B_i}\}$ \\
\STATE $y_i^0 = \dfrac{1}{B_i} \sum\limits_{j=1}^{B_i} g_i(x_i^0; \xi_{i,j})$; \\
\STATE $s_i^0 = \texttt{FastMix}(\{y_i^0\}_{i=1}^m, W, R_0)$; \\
\STATE $x_i^1 = \texttt{FastMix}(\{x_i^0 - \eta s_i^0\}_{i=1}^m, W, R_0)$
\FOR{$t = 1, 2, \dots, T - 1$}
  \STATE Sample $\zeta_t \sim \text{Bernoulli}(p)$
  \FOR{$i = 1, \dots, m$ \textbf{in parallel}}
    \IF{$\zeta_t = 1$}
      \STATE Sample large batch $\mathcal{S}_i^t = \{\xi_{i,1}, \dots, \xi_{i,B_i}\}$
      \STATE $y_i^t = \dfrac{1}{B_i} \sum\limits_{j=1}^{B_i} g_i(x_i^t; \xi_{i,j})$
    \ELSE
      \STATE Sample $\omega_i^t \sim \text{Bernoulli}(q)$
      \IF{$\omega_i^t = 1$}
        \STATE Sample mini-batch $\mathcal{S}_i^t = \{\xi_{i,1}, \dots, \xi_{i,b}\}$
        \STATE $y_i^t = y_i^{t-1} + \dfrac{\omega_i^t}{b q} \sum\limits_{j=1}^b \Big( g_i(x_i^t; \xi_{i,j})$ \\ $ - g_i(x_i^{t-1}; \xi_{i,j}) \Big)$
      \ELSE
        \STATE $y_i^t = y_i^{t-1}$
      \ENDIF
    \ENDIF
    \STATE $s_i^t = \texttt{FastMix}(\{s_i^{t-1} + y_i^t - y_i^{t-1}\}_{i=1}^m, W, R_t)$
    \STATE $x_i^{t+1} = \texttt{FastMix}(\{x_i^t - \eta s_i^t\}_{i=1}^m, W, R_t)$
  \ENDFOR
\ENDFOR
\end{algorithmic}
\textbf{Output}: $x_{i,\text{out}}$ uniformly sampled from $\{x_i^0, x_i^1, \dots, x_i^T\}$
\end{algorithm}

\noindent The core idea of variance reduction is to construct gradient estimates recursively using historical information and small mini-batches in most iterations, thereby significantly reducing sample usage while maintaining estimation accuracy.

As presented in Algorithm~\ref{algo:d-nss-vr}, D-NSS-VR integrates node-specific sampling with recursive gradient updates. During the large-batch update phase, the algorithm follows the design of D-NSS by allocating to each node $i$ a batch size $B_i$ proportional to its local noise level $\sigma_i$. This estimate serves as the initialization for subsequent recursive updates. In the inner update phase, a fixed mini-batch size $b$ is employed across all nodes to construct variance-reduced gradient estimates recursively, as specified in line 15 of Algorithm~\ref{algo:d-nss-vr}, which further improves the sample complexity.

Moreover, inspired by the PAGE method \cite{li2021page}, the algorithm employs a probabilistic update scheme, making the overall framework simpler and more unified from an analytical perspective. At the implementation level, D-NSS-VR also introduces a skip variable $w_i^t$, which allows each node to reuse the previous gradient estimate in certain iterations, thereby reducing computational overhead. In particular, as the noise level tends to zero, the algorithm reduces to a variance reduction method as in the finite-sum setting.

\subsection{Complexity Analysis of D-NSS-VR}

\begin{table*}[ht]
\centering
\caption{Comparison of sample and communication complexity for decentralized variance reduction methods under mean-squared smoothness and heterogeneous variance.}\vskip0.2cm
\begin{tabular}{ccc}
\hline
\textbf{Algorithm} & \textbf{Sample Complexity} & \textbf{Communication Rounds} \\
\hline\addlinespace
\makecell[c]{GT-HSGD \cite{xin2021hybrid}} & 
$O\left( \dfrac{(\Delta \bar{L} + \bar{\sigma}_{\mathrm{QM}}^2)^{3/2}}{\epsilon^3} \right)$ & 
$O\left( \dfrac{(\Delta \bar{L} + \bar{\sigma}_{\mathrm{QM}}^2)^{3/2}}{\epsilon^3} \right)$ \\\addlinespace

\makecell[c]{GT-SR-O \cite{xin2021stochastic}} & 
$O\left( \dfrac{\Delta \bar{L} \bar{\sigma}_{\mathrm{QM}}}{\epsilon^3} + \dfrac{\bar{\sigma}_{\mathrm{QM}}^2}{\epsilon^2} + \dfrac{m\Delta \bar{L}}{\epsilon^2} + m \right)$ & 
$\tilde{O}\left( \dfrac{\Delta \bar{L}}{\sqrt{\chi} \epsilon^2} + \dfrac{\bar{\sigma}_{\mathrm{QM}}}{\sqrt{\chi} \epsilon} \right)$ \\\addlinespace

\makecell[c]{\textbf{D-NSS-VR}\\\small Theorem~\ref{thm_4.1}} & 
$O\left( \dfrac{\Delta \bar{L} \bar{\sigma}_{\mathrm{AM}}}{\epsilon^3} + \dfrac{\bar{\sigma}_{\mathrm{AM}}^2}{\epsilon^2} + \dfrac{\sqrt{m} \Delta \bar{L}}{\epsilon^2} + m \right)$ & 
$\tilde{O}\left( \dfrac{\Delta \bar{L}}{\sqrt{\chi} \epsilon^2} + \dfrac{\bar{\sigma}_{\mathrm{AM}}}{\sqrt{\chi} \epsilon} \right)$ \\\addlinespace
\hline\addlinespace
\makecell[c]{Lower Bound\\\small Theorem~\ref{thm_4.2}} & 
$\Omega\left( \dfrac{\Delta \bar{L} \bar{\sigma}_{2/3}}{\epsilon^3} + \dfrac{\bar{\sigma}_{\mathrm{AM}}^2}{\epsilon^2} + \dfrac{\sqrt{m} \Delta \bar{L}}{\epsilon^2} + m \right)$ & 
$\Omega\left( \dfrac{\Delta \bar{L}}{\sqrt{\chi} \epsilon^2} \right)$ \\\addlinespace
\hline
\end{tabular}
\label{table_2}
\end{table*}

The convergence analysis of variance reduction methods relies on the following mean-squared smoothness assumption.
\begin{assumption}[Mean-Squared Smoothness]
\label{ass_2.5}
There exists a constant $\bar{L} > 0$ such that for any $x, y \in \mathbb{R}^d$,
\begin{align*}
\frac{1}{m} \sum_{i=1}^m \mathbb{E}_{\xi_i} \left\| g_i(x, \xi_i) - g_i(y, \xi_i) \right\|^2 \leq \bar{L}^2 \|x - y\|^2.
\end{align*}
\end{assumption}
Assumption~\ref{ass_2.5} is weaker than that in previous works \cite{pan2020d,sun2020improving,xin2021hybrid,xin2021stochastic}, which require mean-squared smoothness of the stochastic gradient for each local function. Moreover, this assumption also implies that the global objective function $f$ is $\bar{L}$-smooth. 

The following theorem provides upper bounds on the sampling and communication complexity of Algorithm~\ref{algo:d-nss-vr} for finding an $\epsilon$-stationary point.

\newpage
\begin{theorem}
\label{thm_4.1}
Under Assumptions~\ref{ass_2.1}, \ref{ass_2.2}, \ref{ass_2.4}, and \ref{ass_2.5}, consider Algorithm~\ref{algo:d-nss-vr} with the following parameter choices:
\begin{align*}
& \eta = \frac{1}{48\bar{L}}, \quad
B_i = \max\left\{ \left\lceil \frac{32 \sigma_i \sum_{j=1}^m \sigma_j}{m^2\epsilon^2} \right\rceil, 1 \right\}, \quad b = \left\lceil \frac{\sqrt{\sum_{i=1}^m B_i}}{m} \right\rceil,\quad
q = \frac{\sqrt{\sum_{i=1}^m B_i}}{b m},\\
& p = \frac{b q}{b q + \sum_{i=1}^m B_i / m},\quad T = \left\lceil \frac{384 \Delta \bar{L}}{\epsilon^2} + \frac{2}{p} \right\rceil, \quad R_0 = O\left( \frac{1}{\sqrt{\chi}} \log\left( \frac{m}{\epsilon} \right) \right), \;\text{and}\;
R_t = O\left( \frac{1}{\sqrt{\chi}} \log m \right),
\end{align*}
then the output $x_{i,\mathrm{out}}$ of the algorithm is an $\epsilon$-stationary point satisfying $\mathbb{E} \left[ \|\nabla f(x_{i,\mathrm{out}})\|^2 \right] \leq \epsilon^2$. The sample complexity is upper bounded by
\begin{align*}
O \left( \frac{\Delta \bar{L} \bar{\sigma}_{\mathrm{AM}}}{\epsilon^3} + \frac{\bar{\sigma}_{\mathrm{AM}}^2}{\epsilon^2} + \sqrt{m} \frac{ \Delta \bar{L}}{\epsilon^2} + m \right),
\end{align*}
and the communication complexity is bounded by
\begin{align*}
\tilde{O} \left( \frac{\Delta \bar{L}}{\sqrt{\chi} \epsilon^2} + \frac{\bar{\sigma}_{\text{AM}}}{\sqrt{\chi} \epsilon}\right).
\end{align*}
\end{theorem}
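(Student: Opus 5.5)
The plan follows the standard Lyapunov analysis for gradient tracking with multi-consensus (as in DeTAG / MG-DSGD-type proofs), combined with a PAGE/SARAH estimator-error recursion. The node-specific batches $B_i$ enter only through the large-batch error $\frac{1}{m^2}\sum_i \sigma_i^2/B_i$.

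\textbf{Averaged dynamics.} Write $\bar x^t=\frac1m\sum_i x_i^t$, $\bar s^t$, $\bar y^t$ for the network averages. Since \texttt{FastMix} preserves averages under Assumption~\ref{ass_2.4}, and $s_i^{-1}=y_i^{-1}=0$, we have $\bar s^t=\bar y^t$ and $\bar x^{t+1}=\bar x^t-\eta\bar y^t$. The standard Chebyshev guarantee gives contraction of consensus errors by a factor $\rho_t\lesssim (1-\sqrt{\chi})^{R_t}$. With $R_t=O(\chi^{-1/2}\log m)$ we can take $\rho_t\le c/m^2$ for a small absolute $c$; $R_0$ is larger to kill the initial tracking error $\|s^0-\mathbf 1\bar s^0\|$, which scales with the gradient magnitudes.

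\textbf{Descent step.} Because $f$ is $\bar L$-smooth, we get
\[
f(\bar x^{t+1})\le f(\bar x^t)-\tfrac{\eta}{2}\|\nabla f(\bar x^t)\|^2-\tfrac{\eta}{4}\|\bar y^t\|^2+\eta\|\bar y^t-\bar v^t\|^2+\eta\|\bar v^t-\nabla f(\bar x^t)\|^2,
\]
where $\bar v^t=\frac1m\sum_i\nabla f_i(x_i^t)$. The last term should be bounded by $\bar L^2\frac1m\|x^t-\mathbf 1\bar x^t\|^2$. Here we need Assumption~\ref{ass_2.5}: Jensen gives $\frac1m\sum_i\|\nabla f_i(x)-\nabla f_i(y)\|^2\le\bar L^2\|x-y\|^2$, which is exactly what is needed for $\|\frac1m\sum_i(\nabla f_i(x_i)-\nabla f_i(\bar x))\|^2\le\frac{\bar L^2}{m}\sum_i\|x_i-\bar x\|^2$.

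\textbf{Estimator error.} Let $e^t=\|\bar y^t-\bar v^t\|^2$. At a refresh step, by \eqref{eq_2.1} with the chosen $B_i$,
\[
\mathbb E e^t\le \frac{1}{m^2}\sum_i\frac{\sigma_i^2}{B_i}\le \frac{\epsilon^2}{32}.
\]
Otherwise, by independence across nodes and unbiasedness of each node's increment, the conditional variance adds over nodes:
\[
\mathbb E e^t\le e^{t-1}+\frac{1}{m^2}\sum_i\frac{1}{bq}\,\mathbb E\|g_i(x_i^t;\xi)-g_i(x_i^{t-1};\xi)\|^2\le e^{t-1}+\frac{\bar L^2}{mbq}\cdot\frac1m\sum_i\|x_i^t-x_i^{t-1}\|^2 .
\]
Combining both cases,
\[
\mathbb E e^t\le(1-p)\Big(e^{t-1}+\tfrac{\bar L^2}{mbq}\,\Xi^t\Big)+p\,\tfrac{\epsilon^2}{32},
\]
where $\Xi^t$ is bounded by $2\|\eta\bar y^{t-1}\|^2$ plus consensus errors. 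The parameter choices give $bqm=\sqrt{\sum_iB_i}$. Since $\sum_i B_i\gtrsim m$, we have $mbq\ge \sqrt m$, and the choice of $p$ makes $\frac{1-p}{p}\cdot\frac{1}{mbq}\lesssim 1$. With $\eta=1/(48\bar L)$, the term $\frac{\eta^2\bar L^2}{p\,mbq}\|\bar y\|^2$ is absorbed by $-\frac\eta4\|\bar y\|^2$. We then form the Lyapunov function $\Phi^t=f(\bar x^t)+\frac{\eta}{p}e^t+\gamma\cdot\text{(consensus errors of }x\text{ and }s)$.

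\textbf{Consensus errors.} We also need the consensus recursions. Write $\|x^{t+1}-\mathbf 1\bar x^{t+1}\|^2\le\rho(\|x^t-\mathbf 1\bar x^t\|^2+\eta^2\|s^t-\mathbf 1\bar s^t\|^2)$ and $\|s^t-\mathbf 1\bar s^t\|^2\le\rho(\|s^{t-1}-\mathbf 1\bar s^{t-1}\|^2+\|y^t-y^{t-1}\|^2)$.

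The main obstacle is bounding $\mathbb E\|y^t-y^{t-1}\|^2$. At refresh steps it involves $\sum_i\sigma_i^2/B_i$ without the $1/m^2$ factor, plus $m\|\nabla f\|^2$-type terms. In the inner steps it is $\frac{1}{bq}\sum_i\bar L_i^2\|x_i^t-x_i^{t-1}\|^2$, which may be as large as $m\bar L^2$ times the movement. The choice $\rho\le c/m^2$ is made precisely to cancel these factors of $m$, and this is why $R_t$ carries a $\log m$. I would try to show that the multiplier $\rho/p$ stays small enough that the refresh-step variance costs at most $O(\epsilon^2)$.

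\textbf{Conclusion.} Summing the Lyapunov inequality gives
\[
\frac1T\sum_t\mathbb E\|\nabla f(\bar x^t)\|^2\le \frac{\Phi^0}{\eta T}+O(\epsilon^2).
\]
Here $\Phi^0-\inf f\le\Delta+\eta\epsilon^2/(32p)$, so $T\ge 384\Delta\bar L/\epsilon^2+2/p$ makes the right side at most $\epsilon^2/2$. Adding the consensus error transfers this bound to $x_{i,\mathrm{out}}$.

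\textbf{Sample and communication counts.} The expected number of samples is
\[
\sum_iB_i+T\Big(p\sum_iB_i+(1-p)mqb\Big)\approx \sum_iB_i+2T\sqrt{\textstyle\sum_iB_i},
\]
using $p\sum_iB_i\approx mbq=\sqrt{\sum B_i}$. We have $\sum_iB_i\le 32\bar\sigma_{\mathrm{AM}}^2/\epsilon^2+m$ and $1/p\approx\sqrt{\sum B_i}/m$. Hence $T\sqrt{\sum B_i}\lesssim\frac{\Delta\bar L}{\epsilon^2}\big(\frac{\bar\sigma_{\mathrm{AM}}}{\epsilon}+\sqrt m\big)$, and the $1/p$ part of $T$ contributes $\sum B_i/m\cdot$, which is dominated. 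This yields the stated sample bound. The communication bound is $T R_t$, which gives $\tilde O\big(\chi^{-1/2}(\Delta\bar L\epsilon^{-2}+1/p)\big)$ with $1/p\lesssim \bar\sigma_{\mathrm{AM}}/(\sqrt m\,\epsilon)+1$, within the claimed bound.
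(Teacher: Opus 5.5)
The step you label ``the main obstacle'' is where the argument fails to close. Your suggested remedy (keeping a multiplier $\rho/p$ small so that refresh-step variance costs $O(\epsilon^2)$) does not touch the real difficulty.

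At a refresh step, $y_i^t-y_i^{t-1}=(y_i^t-\nabla f_i(x_i^t))+(\nabla f_i(x_i^t)-\nabla f_i(x_i^{t-1}))+(\nabla f_i(x_i^{t-1})-y_i^{t-1})$. The first piece is the fresh $\sigma_i^2/B_i$ noise and the second is a smoothness/movement term. The third, however, is the \emph{un-averaged} local SARAH error accumulated over the roughly $1/p$ inner steps since the previous refresh. So the tracking recursion contains a term of order $p\rho_t^2\eta^2\|Y^{t-1}-\nabla\mathbf{f}(X^{t-1})\|^2$.

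As written, your Lyapunov function $f(\bar x^t)+\frac{\eta}{p}e^t+\gamma\cdot(\text{consensus})$ cannot absorb this term, for two reasons:
\begin{itemize}
\item $e^t$ measures only the network-averaged error, and relating it to the local errors needs an extra argument.
\item Its coefficient has no slack: the descent step contributes $\eta e^t$ and the recursion contributes $\frac{\eta(1-p)}{p}e^t$, which sum to exactly $\frac{\eta}{p}$.
\end{itemize}

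The paper closes this by adding a second potential $V(t)=\frac1m\|Y^t-\nabla\mathbf{f}(X^t)\|^2$ with weight $\frac{\eta}{mp}$. It proves a separate $(1-p)$-contraction for $V$ (Lemma~\ref{lemma_V}) and chooses $R_t$ so that $\rho_t^2\lesssim 1/m^2$, which gives $\frac{\eta(1-p)}{mp}+6\rho_t^2\eta m\le\frac{\eta}{mp}$.

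A fix closer to your setup would also work. Sampling is conditionally independent across nodes, so the per-node errors are orthogonal martingale sums and $\mathbb{E}\|Y^t-\nabla\mathbf{f}(X^t)\|^2=m^2\,\mathbb{E}e^t$. Raising the weight on $e^t$ to $2\eta/p$ then frees a slack $\eta\,\mathbb{E}e^t$ that absorbs the term once $\rho_t^2\lesssim 1/m^2$. Without one of these two ingredients, the Lyapunov inequality does not close.

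There are also two smaller issues.

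First, Assumption~\ref{ass_2.5} controls $\frac1m\sum_i\mathbb{E}\|g_i(x;\xi_i)-g_i(y;\xi_i)\|^2$ only at points $x,y$ shared by all nodes. It therefore does not justify your bound $\|\frac1m\sum_i(\nabla f_i(x_i)-\nabla f_i(\bar x))\|^2\le\frac{\bar L^2}{m}\sum_i\|x_i-\bar x\|^2$. Nor does it justify the factor $\frac{\bar L^2}{mbq}\cdot\frac1m\sum_i\|x_i^t-x_i^{t-1}\|^2$ in your estimator recursion, where each node is evaluated at its own iterates. The paper instead:
\begin{itemize}
\item splits $x_i^{t+1}\to\bar x^{t+1}\to\bar x^t\to x_i^t$;
\item applies Assumption~\ref{ass_2.5} only to the common-point piece;
\item uses the per-node bound $\mathbb{E}\|g_i(x;\xi)-g_i(y;\xi)\|^2\le m\bar L^2\|x-y\|^2$ on the consensus pieces.
\end{itemize}
This loses a factor $m$ on the consensus terms, which the $\log m$ in $R_t$ absorbs.

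Second, your own identities give $\frac{1-p}{p}=mbq=\sqrt{\sum_iB_i}$, i.e.\ $1/p=1+\sqrt{\sum_iB_i}$, not $\sqrt{\sum_iB_i}/m$. The $2/p$ part of $T$ therefore costs $O(\sum_iB_i)$ samples and $\tilde O(\chi^{-1/2}(\bar\sigma_{\mathrm{AM}}/\epsilon+\sqrt m))$ communication. This does not change the final orders for small $\epsilon$.
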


Theorem \ref{thm_4.1} shows that the variance reduction method can also achieve a dependence on the arithmetic mean of standard deviations.
As shown in Table~\ref{table_2}, we compare the sample and communication complexity of D-NSS-VR with state-of-the-art decentralized variance reduction methods \cite{xin2021hybrid,xin2021stochastic}. D-NSS-VR exhibits better dependence on the variance parameters, yielding an $O(\sqrt{m})$ improvement in sample complexity under strong heterogeneity. Moreover, when all variances $\sigma_i \to 0$, the sample complexity of D-NSS-VR matches that of variance reduction method for the finite-sum problem \cite{li2021page}, which is not achieved by the compared methods.

\subsection{Lower Bounds with Mean-Squared Smoothness}
We establish the following lower bound on the sample complexity for variance reduction algorithms.
\begin{theorem}
\label{thm_4.2}
For any algorithm $\mathcal{A}$ satisfying Definition~\ref{def_3.1}, there exists a distributed objective function of the form $f(x) = \frac{1}{m} \sum_{i=1}^m f_i(x)$ with corresponding stochastic gradients satisfying Assumptions~\ref{ass_2.1}, \ref{ass_2.2}, and \ref{ass_2.5}, such that, for sufficiently small $\epsilon$, the number of stochastic gradient samples required to find an $\epsilon$-stationary point is at least
\begin{align*}
    \Omega\left( \frac{\Delta \bar{L} \bar{\sigma}_{2/3}}{\epsilon^3} + \frac{\bar{\sigma}_{\mathrm{AM}}^2}{\epsilon^2} + \frac{\sqrt{m} \Delta \bar{L}}{\epsilon^2} + m \right),
\end{align*}
where $\bar{\sigma}_{2/3} = ((\sigma_1^{2/3} + \dots + \sigma_m^{2/3})/m)^{3/2}$.
\end{theorem}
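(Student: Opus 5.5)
Since a maximum of lower bounds is within a factor four of their sum, I will prove the four terms separately. Each term gets its own hard instance, and all instances satisfy Assumptions~\ref{ass_2.1}, \ref{ass_2.2} and \ref{ass_2.5} with the prescribed $\Delta,\bar L,\sigma_i$.

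The terms $m$ and $\sqrt{m}\Delta\bar L/\epsilon^2$ come from the noiseless finite-sum lower bound. For the first, observe that every node must query its oracle at least once: if node $i$ never queries, the information about $f_i$ reaching any memory is zero. A function $f_i$ whose gradient carries an $\Omega(\epsilon)$ component invisible to the other nodes then forces a non-stationary output. For the second, I will follow \citet{zhou2019lower} with orthogonal components, $f_i(x)=\frac{\lambda m}{?}\,h(U_i x/\beta)$, where $h$ is the \citet{arjevani2023lower} zero-chain and the $U_i$ have orthogonal row spaces. Mean-squared smoothness in Assumption~\ref{ass_2.5} averages squared Lipschitz constants, so each $f_i$ may have smoothness about $\sqrt{m}\bar L$. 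This is exactly the \citet{zhou2019lower} scaling that yields $\Omega(\sqrt m\Delta\bar L/\epsilon^2)$ gradient queries. Only progress along the chains matters, and in Definition~\ref{def_3.1} progress on chain $i$ requires a query at node $i$, so communication does not help.

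The term $\bar\sigma_{\mathrm{AM}}^2/\epsilon^2$ comes from a statistical estimation argument. I take linear local functions $f_i(x)=\langle v_i,x\rangle$ plus a quadratic term to keep $f$ bounded below and smooth, so that $\nabla f(x)$ has a component $\frac1m\sum_i v_i$. Node $i$ observes $v_i$ corrupted by Gaussian noise of variance $\sigma_i^2$. Finding $x$ with $\|\nabla f(x)\|\le\epsilon$ then requires estimating $\frac1m\sum_i v_i$ to accuracy $\epsilon$. By a two-point (Le Cam) argument, with $n_i$ samples at node $i$ the error is $\Omega\big(\frac1{m^2}\sum_i\sigma_i^2/n_i\big)$. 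Minimizing $\sum n_i$ subject to this being at most $\epsilon^2$ is exactly problem~\eqref{eq_2.2}, so $\sum n_i\ge \bar\sigma_{\mathrm{AM}}^2/\epsilon^2$ by the KKT computation preceding~\eqref{eq_Bstar}. The algorithm chooses its allocation adaptively, but since any allocation must satisfy the constraint, the minimum bounds the total.

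The main obstacle is the $\Delta\bar L\bar\sigma_{2/3}/\epsilon^3$ term. The plan is a heterogeneous version of the \citet{arjevani2023lower} mean-squared-smooth construction on orthogonal subspaces. Node $i$ gets its own chain of length $T_i$ and scale parameters, and its stochastic oracle reveals the next coordinate only with probability $p_i$. This costs node $i$ about $T_i/p_i$ samples. The parameters must respect three constraints: the variance at node $i$ is at most $\sigma_i^2$, which forces roughly $p_i\gtrsim(\text{gradient scale})^2/\sigma_i^2$; the average of squared smoothness is at most $\bar L^2$; and the total gap $\sum_i\Delta_i/m$ is at most $\Delta$. Finding an $\epsilon$-stationary point requires completing a constant fraction of the chains in an averaged sense, so the total cost is a sum of per-node costs under budget constraints. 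By Hölder's inequality, the constrained sum should evaluate to $\Delta\bar L\,(\frac1m\sum\sigma_i^{2/3})^{3/2}/\epsilon^3$, which explains why $\sigma_{2/3}$ appears. The delicate points are twofold. First, the algorithm may unevenly allocate samples across nodes, and I plan to handle this by requiring that the gradient norm stays large unless every chain, weighted appropriately, is advanced. Second, the per-node variance, scale and smoothness parameters must be tuned jointly so that the Hölder optimum is attained. The condition ``$\epsilon$ sufficiently small'' will guarantee chain lengths of at least one.
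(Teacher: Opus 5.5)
Your four-way decomposition, and your treatment of $\sqrt m\Delta\bar L/\epsilon^2+m$ (via \citet{zhou2019lower} with orthogonal blocks) and of $\bar\sigma_{\mathrm{AM}}^2/\epsilon^2$ (a Gaussian two-point argument plus the allocation problem~(\ref{eq_2.2})), match the paper.

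One caution on the second term. A two-point bound of order $\frac{1}{m^2}\sum_i\sigma_i^2/n_i$ needs hypotheses tuned to the $n_i$. So ``any allocation must satisfy the constraint'' does not by itself handle adaptive allocation. It does if you fix $v_i=s\gamma\sigma_i e_1$ with $\gamma\asymp\epsilon/\bar\sigma_{\mathrm{AM}}$ in advance: every sample at every node then carries KL divergence $2\gamma^2$, so the total is $2\gamma^2\,\mathbb{E}[\sum_i n_i]$ whatever the allocation.

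The genuine gap is the leading term. Your ingredients there are the paper's: orthogonal blocks, per-node $p_i$, and the mean-squared-smooth oracle of \citet{arjevani2023lower}. But the argument is only sketched, and the one quantitative constraint you state is the wrong one. With orthogonal blocks, Assumption~\ref{ass_2.5} reads $\frac1m\sum_i L_i^2\|U_i(x-y)\|^2\le\bar L^2\|x-y\|^2$. Taking $x-y$ inside a single block shows the binding condition is $L_i\le\sqrt m\,\bar L$ for every $i$, which your $\sqrt m$-paragraph implicitly uses. It is not $\frac1m\sum_iL_i^2\le\bar L^2$.

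This distinction is exactly what produces $\bar\sigma_{2/3}$. Normalize the gradient weights by $\frac1m\sum_iw_i=1$, so that unfinished chains of total weight at least $m/4$ force $\|\nabla f\|>\epsilon$. The variance constraint then forces $p_i\asymp m w_i\epsilon^2/\sigma_i^2$. With chain costs $C_i=D_i/p_i$, the gap budget becomes $\sum_i w_i^{3/2}C_i/(\sigma_iL_i)\lesssim\Delta/(\sqrt m\,\epsilon^3)$.

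Under an averaged smoothness budget, Hölder moves smoothness across nodes. The best this bookkeeping then yields is of order $\Delta\bar L\bar\sigma_{\mathrm{AM}}/(\sqrt m\,\epsilon^3)$. That is never larger than the target, since $\bar\sigma_{\mathrm{AM}}\le\sqrt m\,\bar\sigma_{2/3}$ (equivalently $\|\sigma\|_1\le\|\sigma\|_{2/3}$), and it is a factor $\sqrt m$ smaller when the $\sigma_i$ are equal. The target is reached only by saturating $L_i=\sqrt m\,\bar L$ at every node (the paper's prefactor $\sqrt{mp_i}\,\bar L/\ell$) and taking $w_i=(\sigma_i/\bar\sigma_{2/3})^{2/3}$. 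These weights make $w_i^{3/2}/\sigma_i\equiv 1/\bar\sigma_{2/3}$, so the budget becomes $\sum_iC_i\lesssim\Delta\bar L\bar\sigma_{2/3}/\epsilon^3$.

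You also leave open how uneven allocation is neutralized, and ``unless every chain is advanced'' is not the right condition. There are two ways to close this.

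\emph{The paper's route.} Take $D_i$ proportional to node $i$'s expected share $c_i=\mathbb{E}[\#\mathrm{SFO}_i]/\#\mathrm{SFO}$, so that the threshold $(D_i-1)/(2p_i)$ of Lemma~\ref{lemma_2.3} is twice node $i$'s expected sample count at the budget. The gap budget holds for every $c$ because $w_i^{3/2}/\sigma_i$ is constant. Markov's inequality and Lemma~\ref{lemma_2.3} then leave each chain unfinished with probability at least $1/4$, so $\mathbb{E}\|\nabla f\|^2>\frac{4\epsilon^2}{m}\sum_iw_i\cdot\frac14=\epsilon^2$.

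\emph{A version of your ``weighted'' idea.} Take $C_i=\kappa w_i$; this does not let the instance depend on the algorithm. The same Markov and zero-chain argument gives $\frac1m\sum_iw_i\,\mathbb{P}(\text{chain } i \text{ unfinished})\ge\frac12-2\,\mathbb{E}[\#\mathrm{SFO}]/(m\kappa)$, so $\mathbb{E}[\#\mathrm{SFO}]\ge m\kappa/8$ is necessary. The budget permits $m\kappa\asymp\Delta\bar L\bar\sigma_{2/3}/\epsilon^3$, because $\sum_iw_i^{5/2}/\sigma_i=m/\bar\sigma_{2/3}$ at these weights. That is precisely the Hölder minimum you anticipated.

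Without the per-node smoothness cap, these explicit scalings, and one of these two cost--weight matchings, the claim that Hölder ``should'' produce $\bar\sigma_{2/3}$ is not yet an argument.
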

Under mean-squared smoothness assumption, the communication lower bound remains $\Omega (\epsilon^{-2} \Delta L/\sqrt{\chi})$ as its construction does not rely on the stochastic gradients \cite{yuan2022revisiting}.

\begin{remark}
Theorems~\ref{thm_4.1} and \ref{thm_4.2} show that while several terms in the upper bounds match the corresponding lower bounds, a gap remains in the leading term. Due to the inherent difficulty of constructing lower bounds, particularly under the mean-squared smoothness assumption, closing this gap is left as an open problem for future work.
Although the upper and lower bounds do not fully match, our result demonstrates that the complexity of variance reduction methods can depend on $\bar{\sigma}_{\mathrm{AM}}$. Finally, the communication complexity is nearly optimal in the small-$\epsilon$ regime.
\end{remark}

\section{Numerical Experiments}

\begin{figure*}[t]
\centering
\begin{tabular}{ccc}
\includegraphics[scale=0.2]{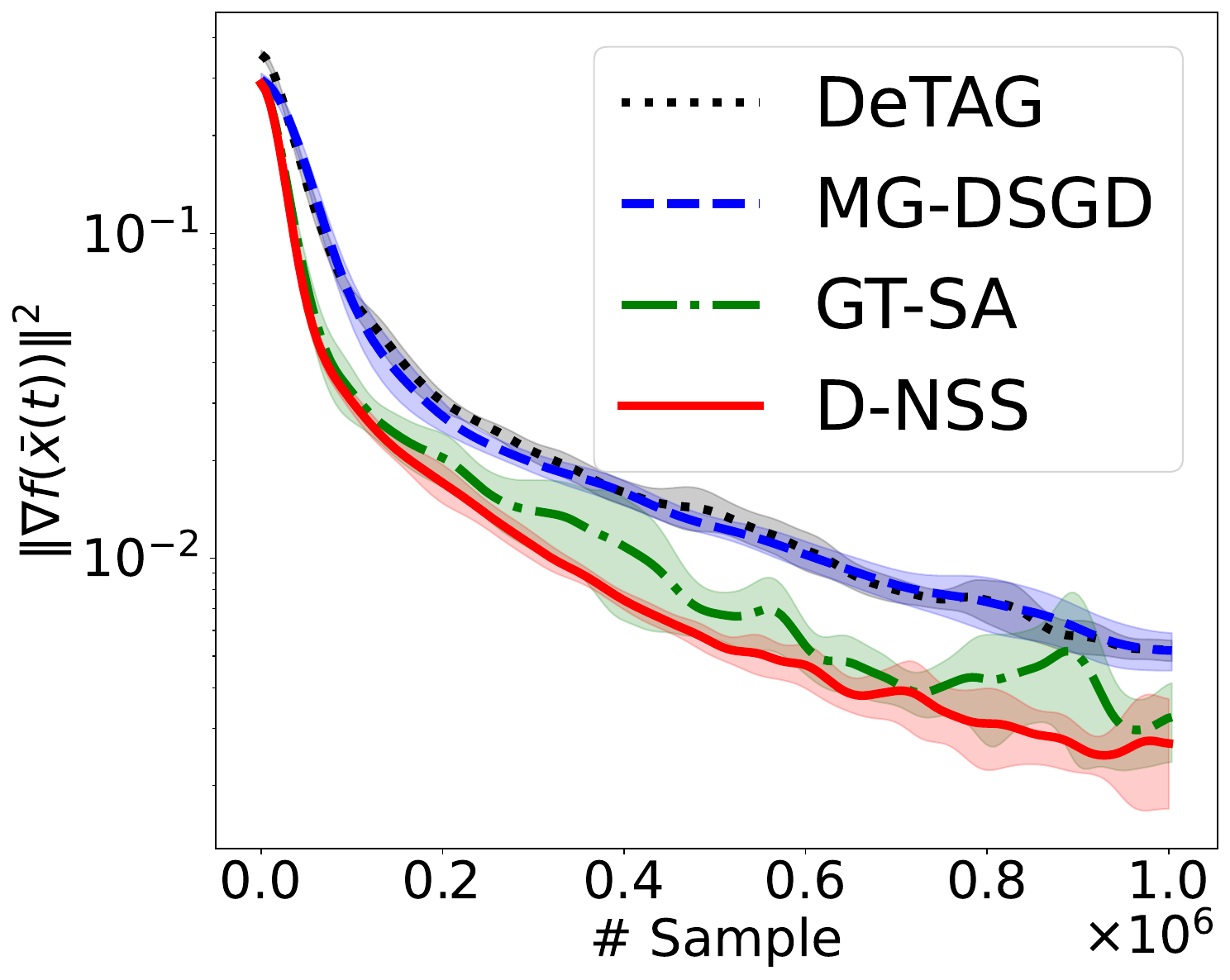} & 
\includegraphics[scale=0.2]{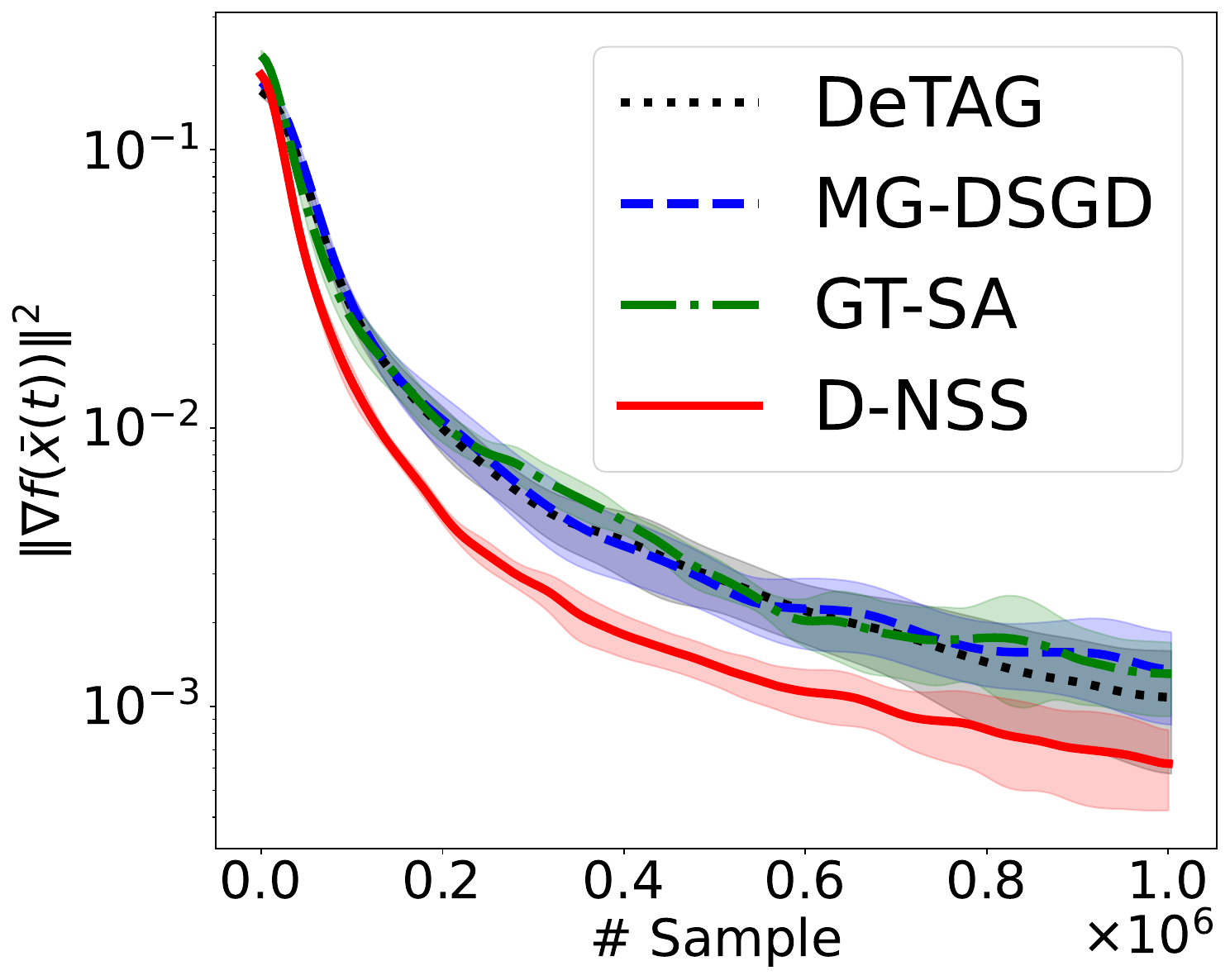}    & 
\includegraphics[scale=0.2]{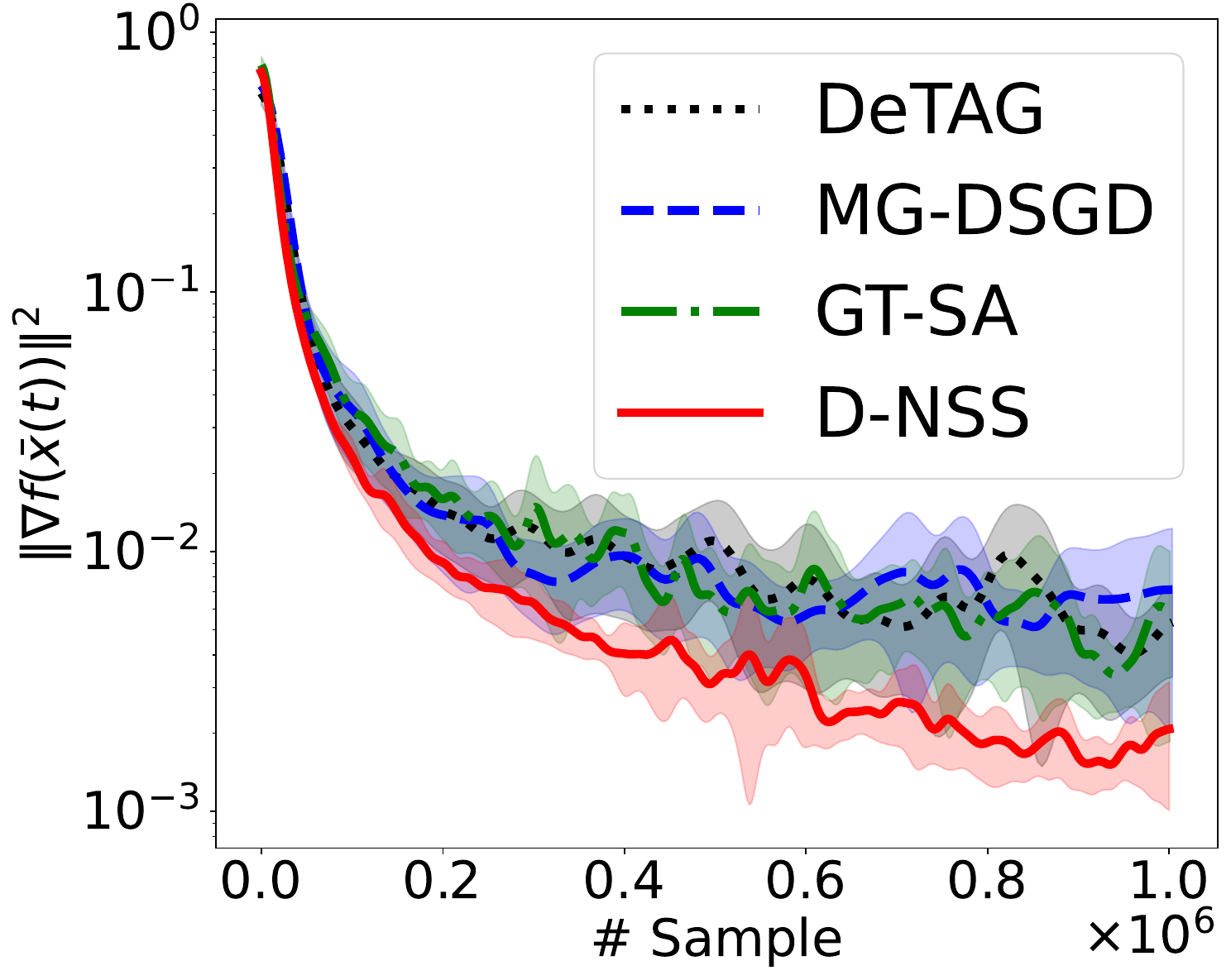} \\
(a) \texttt{a9a} & (b) \texttt{w8a} & (c) \texttt{MNIST}
\end{tabular} 
\caption{Performance comparison of decentralized algorithms in terms of the number of samples on datasets \texttt{a9a}, \texttt{w8a}, and \texttt{mnist}. The lines represent averages over 5 runs, and the shaded regions denote the standard deviations.}\label{fig1} 
\vskip0.2cm
\end{figure*}

\begin{figure*}[t]
\centering
\begin{tabular}{ccc}
\includegraphics[scale=0.2]{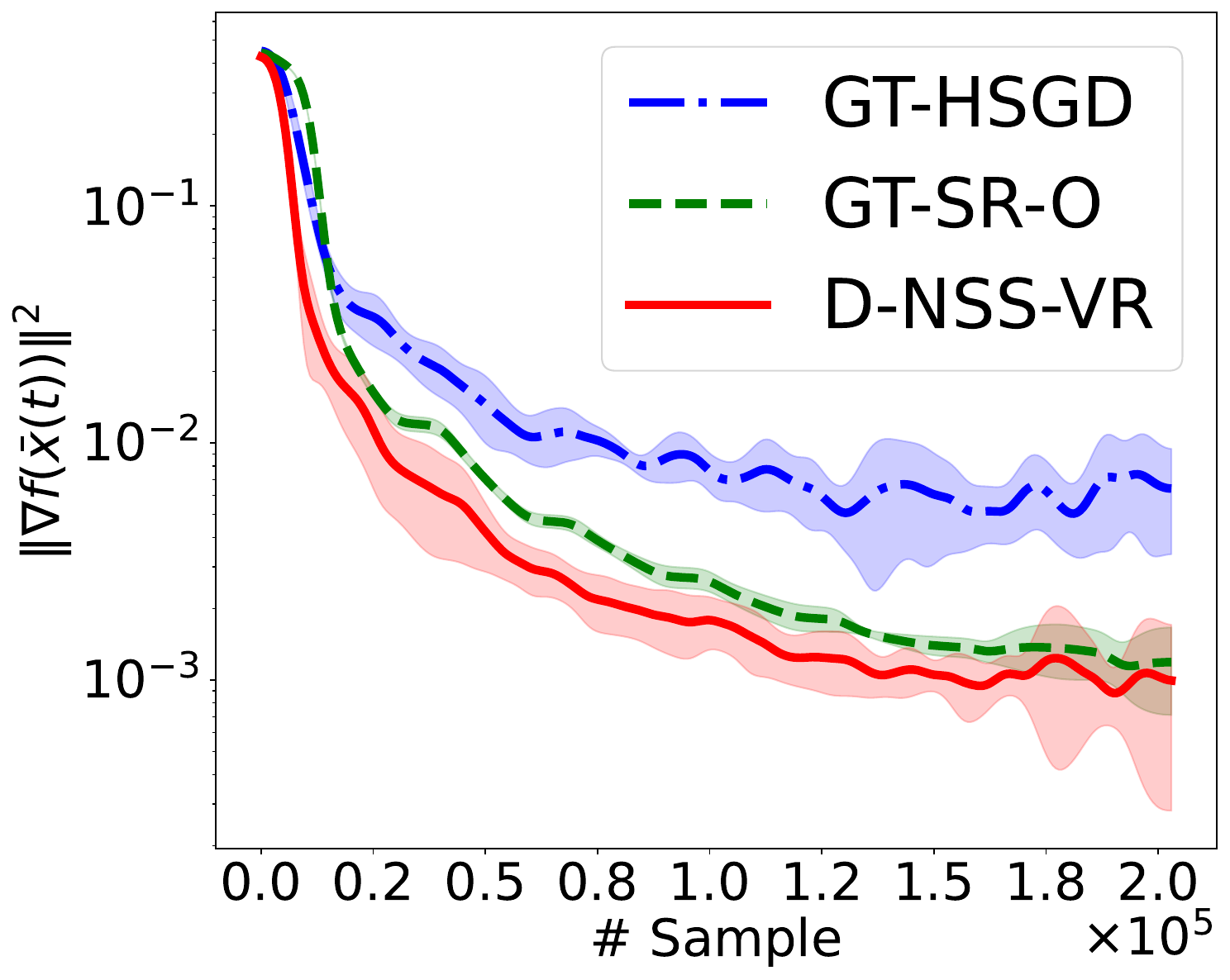} & 
\includegraphics[scale=0.2]{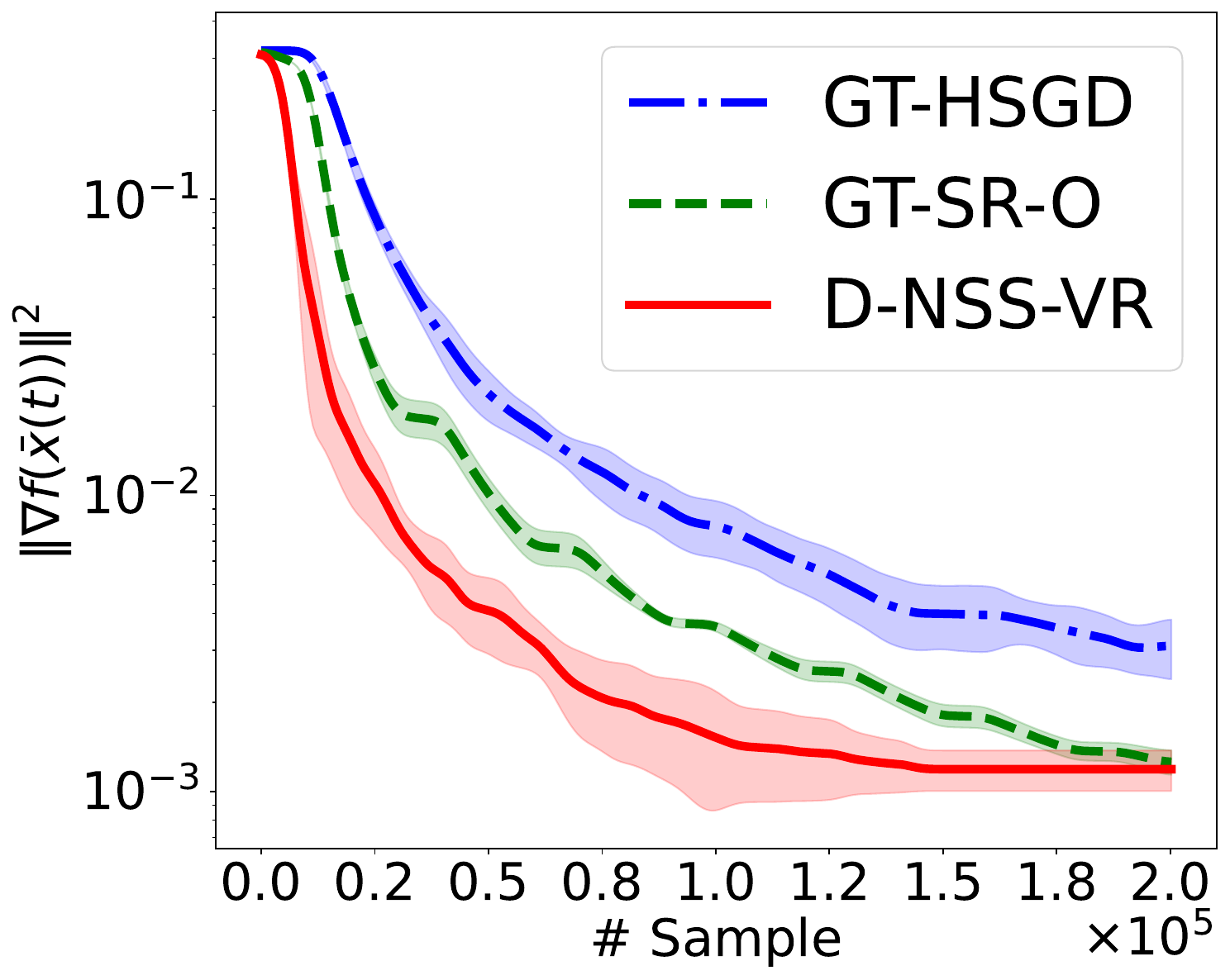}    & 
\includegraphics[scale=0.2]{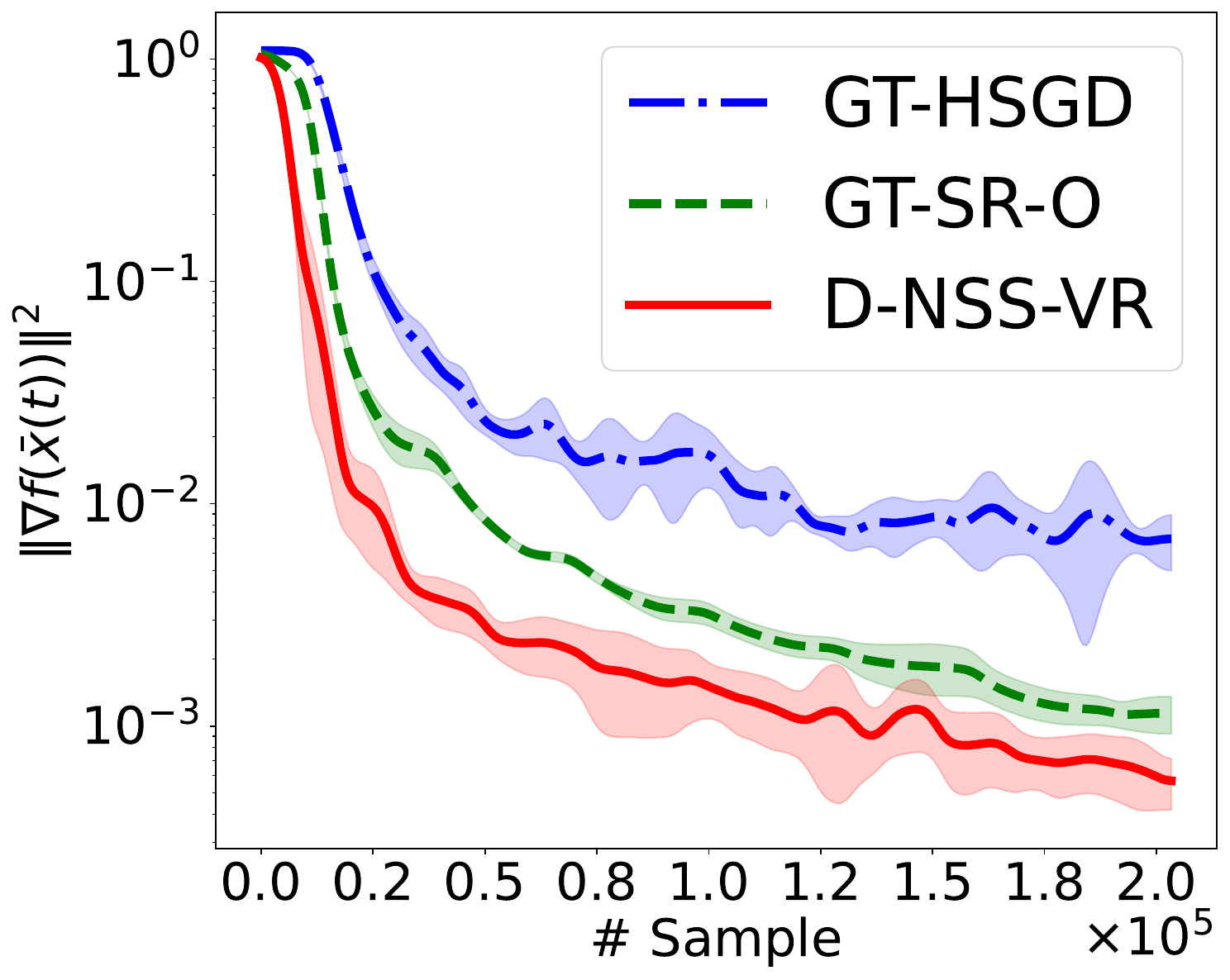} \\
(a) \texttt{a9a} & (b) \texttt{w8a} & (c) \texttt{MNIST}
\end{tabular} 
\caption{Performance comparison of decentralized variance reduction algorithms in terms of the number of samples on datasets \texttt{a9a}, \texttt{w8a}, and \texttt{mnist}. The lines represent averages over 5 runs, and the shaded regions denote the standard deviations.}\label{fig2} 
\end{figure*}

In this section, we validate the convergence of the proposed algorithms through numerical experiments on multiple datasets and compare them with existing methods. Specifically, we consider a widely used regularized logistic regression problem for binary classification in the decentralized setting~\cite{xin2021hybrid,luo2022on,gao2023distributed}:
\begin{equation*}
    f_i(x) = \frac{1}{N_i}\sum_{j=1}^{N_i} \log \left(1+\exp \left(-b_{ij} a_{ij}^{\top} x\right)\right)+ r \sum_{k=1}^d \frac{[x]_k^2}{1+[x]_k^2},
\end{equation*}
where $a_{ij} \in \mathbb{R}^d$ is the feature vector of the $j$-th sample at node $i$, $b_{ij} \in \{\pm 1\}$ is the corresponding binary label, and $r > 0$ is the regularization parameter, which is set to $10^{-4}$ in our experiments. Each node $i$ holds $N_i$ local samples, which may vary across nodes. To explicitly model heterogeneous variance across nodes, Gaussian noise with node-specific variances is added to the stochastic gradients.

We conduct numerical experiments on three real-world datasets: \texttt{a9a}, \texttt{w8a}, and \texttt{MNIST}. The datasets \texttt{a9a} ($N=32,561$, $d=123$) and \texttt{w8a} ($N=49,749$, $d=300$) can be downloaded from the LIBSVM repository \cite{chang2011libsvm}. For the \texttt{MNIST} dataset \cite{lecun2002gradient}, the digits 4 and 5 are utilized for the classification task, with $N=11,263$ and $d=784$.
We set the number of nodes $m = 20 $.
Regarding the communication network, a random graph with $\chi = 0.41$ is used and Metropolis-Hastings weights \cite{xiao2005scheme} are applied to construct the communication matrix $W$.

We first compare D-NSS (Algorithm~\ref{algo:d-nss}) with three baseline algorithms: DeTAG \cite{lu2023decentralized}, MG-DSGD \cite{yuan2022revisiting}, and GT-SA \cite{xin2021stochastic}. For each baseline, the batch sizes and step sizes are tuned to achieve their best empirical performance. In D-NSS, each $\sigma_i$ is estimated by computing the variance on a small batch (50 on each node) of stochastic gradients and broadcast at initialization. The additional cost of this variance estimation and communication is negligible compared with the overall cost.
The experimental results are presented in Figure~\ref{fig1}. It can be observed that the proposed D-NSS consistently outperforms the baseline methods across all three datasets, which demonstrates the effectiveness of the node-specific sampling strategy.

For the variance-reduced method D-NSS-VR (Algorithm~\ref{algo:d-nss-vr}), we compare it with the state-of-the-art variance-reduced algorithms GT-HSGD \cite{xin2021hybrid} and GT-SR-O \cite{xin2021stochastic}. The results are presented in Figure~\ref{fig2}. It can be observed that D-NSS-VR also consistently achieves the best performance across all three datasets. Although the incorporation of randomness (lines 6 and 12 in Algorithm~\ref{algo:d-nss-vr}) increases the variance, the combination with the node-specific sampling strategy leads to a significant advantage in practical performance.

\section{Conclusion}
This work studies decentralized non-convex stochastic optimization under heterogeneous variance. We propose D-NSS, a node-specific sampling algorithm that allocates samples according to local noise levels, and establish a sample complexity bound depending on the arithmetic mean of local standard deviations. A matching lower bound is derived, demonstrating the optimality of the proposed approach. Furthermore, by incorporating variance reduction, we develop D-NSS-VR, which achieves improved sample complexity under mean-squared smoothness while maintaining the arithmetic-mean dependence. Numerical experiments on multiple datasets validate our theoretical results and demonstrate the practical advantages of the proposed algorithms. Future work includes closing the remaining gap in the lower bound for variance reduction methods and investigating the performance of higher-order algorithms under heterogeneous variance.

\bibliographystyle{plainnat}
\bibliography{reference}

\newpage
\appendix
\section{Discussion on the Smoothness Assumptions}
\label{app_a}
In this section, we discuss the relationships between different smoothness assumptions.
We begin by recalling smoothness Assumption~\ref{ass_2.3}:
the global objective $f$ is $L$-smooth, and each local function $f_i$ is $mL$-smooth, i.e., for all $x, y \in \mathbb{R}^d$, we have
\begin{align}
\label{eq_smooth_g}
\|\nabla f(x) - \nabla f(y)\|^2 \leq L^2 \|x - y\|^2,
\end{align}
and for any node $i$ and $x, y \in \mathbb{R}^d$, we have
\begin{align}
\label{eq_smooth_ml}
\|\nabla f_i(x) - \nabla f_i(y)\|^2 \leq m^2 L^2 \|x - y\|^2.
\end{align}
In contrast, existing works \cite{lu2021optimal, lu2023decentralized, yuan2022revisiting} commonly assume that each local function $f_i$ is $L$‑smooth, i.e.,
\begin{align}
\label{eq_smooth_l}
\|\nabla f_i(x) - \nabla f_i(y)\|^2 \leq L^2 \|x-y\|^2.
\end{align}
We then show that the assumption in \eqref{eq_smooth_l} implies \eqref{eq_smooth_g} and \eqref{eq_smooth_ml}. 
Noting that
\begin{align*}
\|\nabla f(x) - \nabla f(y)\|^2 &= \left\|\frac{1}{m}\sum_{i=1}^m \left( \nabla f_i(x) -\nabla f_i(y) \right)\right\|^2 \\
& \leq \frac{1}{m}\sum_{i=1}^m \|\nabla f_i(x) - \nabla f_i(y)\|^2 \\
& \leq \frac{1}{m}\sum_{i=1}^m L^2 \|x-y\|^2 \\
& = L^2 \|x-y\|^2,
\end{align*}
and 
\begin{align*}
\|\nabla f_i(x) - \nabla f_i(y)\|^2 \leq L^2 \|x-y\|^2 \leq m^2L^2 \|x-y\|^2.
\end{align*}
Therefore, Assumption~\ref{ass_2.3} is weaker than the smoothness condition in existing works \cite{lu2021optimal, lu2023decentralized, yuan2022revisiting}.

For the mean-square smoothness Assumption~\ref{ass_2.5}, there exists a constant $\bar{L} > 0$ such that for any $x, y \in \mathbb{R}^d$,
\begin{align}
\label{eq_smooth_ms}
\frac{1}{m} \sum_{i=1}^m \mathbb{E}_{\xi_i} \left\| g_i(x, \xi_i) - g_i(y, \xi_i) \right\|^2 \leq \bar{L}^2 \|x - y\|^2.
\end{align}
In contrast, existing variance reduction methods \cite{pan2020d,sun2020improving,xin2021hybrid,xin2021stochastic} commonly assume that
\begin{align}
\label{eq_smooth_lms}
\mathbb{E}_{\xi_i} \left\| g_i(x, \xi_i) - g_i(y, \xi_i) \right\|^2 \leq \bar{L}^2 \|x - y\|^2.
\end{align}
Note that if \eqref{eq_smooth_lms} holds, then
\begin{align*}
\frac{1}{m} \sum_{i=1}^m \mathbb{E}_{\xi_i} \left\| g_i(x, \xi_i) - g_i(y, \xi_i) \right\|^2
\leq \frac{1}{m} \sum_{i=1}^m \bar{L}^2 \|x - y\|^2
= \bar{L}^2 \|x - y\|^2,
\end{align*}
which implies \eqref{eq_smooth_ms}. Therefore, Assumption~\ref{ass_2.5} is weaker.

Finally, we show that Assumption~\ref{ass_2.5} implies that the global objective function $f$ is $\bar{L}$‑smooth. 
For any $x, y \in \mathbb{R}^d$, we have
\begin{align*}
    \|\nabla f(x) - \nabla f(y)\|^2 & = \left\| \frac{1}{m} \sum_{i=1}^m \mathbb{E}_{\xi_i} \left[ g_i(x;\xi_i) - g_i(y;\xi_i) \right]\right\|^2\\
    & \leq \frac{1}{m} \sum_{i=1}^m \left\| \mathbb{E}_{\xi_i}\left[ g_i(x, \xi_i) - g_i(y, \xi_i) \right] \right\|^2 \\
    & \leq \frac{1}{m} \sum_{i=1}^m \mathbb{E}_{\xi_i} \left\| g_i(x, \xi_i) - g_i(y, \xi_i) \right\|^2\\
    &\leq \bar{L}^2 \|x - y\|^2,
\end{align*}
where the second inequality follows from Jensen’s inequality, and the last inequality holds by the mean‑square smoothness assumption.

\section{Some Basic Results}

In this section, we present several basic results that will be used in the subsequent proofs.
First, we recall the notation used in the paper. The variable $x_i^t$ denotes the local variable at node $i$ in iteration $t$, $y_i^t$ denotes the gradient estimate, and $s_i^t$ denotes the gradient tracking variable. We define the averaged variables as $\bar{x}^t := \frac{1}{m} \sum_{i=1}^m x_i^t$, $\bar{y}^t := \frac{1}{m} \sum_{i=1}^m y_i^t$, and $\bar{s}^t := \frac{1}{m} \sum_{i=1}^m s_i^t$.
Uppercase letters are used to denote matrices stacking the corresponding vectors, e.g.,  
\begin{align*}
X^t =
\begin{bmatrix}
x_1^t \\
\vdots \\
x_m^t
\end{bmatrix}, \quad
Y^t =
\begin{bmatrix}
y_1^t \\
\vdots \\
y_m^t
\end{bmatrix},\quad \text{and} \quad
S^t =
\begin{bmatrix}
s_1^t \\
\vdots \\
s_m^t
\end{bmatrix}
\in \mathbb{R}^{m \times d}.
\end{align*}
For $\bar{x} \in \mathbb{R}^d$, define $\mathbf{1}\bar{x} \in \mathbb{R}^{m\times d}$ as the matrix whose rows are all equal to $\bar{x}$. The notation $\nabla \mathbf{f}(X^t) \in \mathbb{R}^{m \times d}$ refers to the matrix whose $i$‑th row is $\nabla f_i(x_i^t) \in \mathbb{R}^d$.
The notation $\| \cdot \|$ represents the Euclidean norm of a vector or the Frobenius norm of a matrix.

Then we present some useful lemmas.
\begin{lemma}
Under Assumption~\ref{ass_2.5}, for any $x,y \in \mathbb{R}^d$, it holds that
\begin{align*}
\mathbb{E}_{\xi_i} \left[ \| g_i(x;\xi_i) - g_i(y;\xi_i) \|^2 \right] \le m \bar{L}^2 \|x-y \|^2.
\end{align*}
\end{lemma}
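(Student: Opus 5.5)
The inequality follows from Assumption~\ref{ass_2.5} by dropping all but one term of a nonnegative sum. Fix an index $i$ and points $x,y\in\mathbb{R}^d$. For every node $j$, the quantity $\mathbb{E}_{\xi_j}\left[\|g_j(x;\xi_j)-g_j(y;\xi_j)\|^2\right]$ is nonnegative, since it is the expectation of a squared norm. Hence the single term with index $i$ is bounded by the whole sum:
\begin{align*}
\mathbb{E}_{\xi_i}\left[\|g_i(x;\xi_i)-g_i(y;\xi_i)\|^2\right] \le \sum_{j=1}^m \mathbb{E}_{\xi_j}\left[\|g_j(x;\xi_j)-g_j(y;\xi_j)\|^2\right].
\end{align*}

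Next we rewrite the right-hand side as $m$ times the average $\frac{1}{m}\sum_{j=1}^m \mathbb{E}_{\xi_j}\|g_j(x,\xi_j)-g_j(y,\xi_j)\|^2$. Assumption~\ref{ass_2.5} bounds this average by $\bar{L}^2\|x-y\|^2$. Combining the two displays gives
\begin{align*}
\mathbb{E}_{\xi_i}\left[\|g_i(x;\xi_i)-g_i(y;\xi_i)\|^2\right] \le m\bar{L}^2\|x-y\|^2,
\end{align*}
which is the claimed inequality.

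No step here is a real obstacle. The only point to check is that each summand is well defined and nonnegative, so that discarding the other $m-1$ terms only decreases the sum. This holds because each expectation is of a squared Euclidean norm, and Assumption~\ref{ass_2.5} guarantees these expectations are finite.

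The bound is loose by a factor of $m$ compared with assuming mean-squared smoothness at each node separately. This looseness is the price of the weaker averaged assumption, and it is consistent with the $mL$ per-node smoothness allowed in Assumption~\ref{ass_2.3}.
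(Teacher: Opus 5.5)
Your proof is correct and is exactly the paper's argument: bound the $i$-th nonnegative term by the full sum over nodes, then apply Assumption~\ref{ass_2.5} to $m$ times the average. Your use of a separate summation index $j$ is slightly cleaner than the paper's version, which reuses $i$ for the sum.
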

\begin{proof}
By Assumption~\ref{ass_2.5}, we have
\begin{align*}
\mathbb{E}_{\xi_i} \left[ \| g_i(x;\xi_i) - g_i(y;\xi_i) \|^2 \right] \le \sum_{i=1}^m \mathbb{E}_{\xi_i} \left[ \| g_i(x;\xi_i) - g_i(y;\xi_i) \|^2 \right] \le m \bar{L}^2 \|x-y \|^2,
\end{align*}
which completes the proof.
\end{proof}

The following proposition \cite{ye2023multi} characterizes the convergence of Algorithm~\ref{algo:fastmix}.
\begin{lemma}[{\citet[Proposition 1]{ye2023multi}}]
\label{lemma_1.1}
    Under Assumption~\ref{ass_2.4}, the output $Z^R = \mathtt{FastMix}(\{z^0_i\}_{i=1}^m, W, R)$ of Algorithm~\ref{algo:fastmix} holds that
    \begin{align*}
    \frac{1}{m} \mathbf{1}^\top Z^{R} = \bar{z}^0
    \end{align*}
    and
    \begin{align*}        
    \| Z^R - \mathbf{1} \bar{z}^0 \| \leq c_1 \big(1 - c_2 \sqrt{1 - \lambda_2(W)}\,\big)^R \| Z^0 - \mathbf{1} \bar{z}^0 \|,
    \end{align*}
    where $\bar{z}^0 = \frac{1}{m} \mathbf{1}^\top Z^0, c_1 = \sqrt{14}$, and $c_2 = 1 - 1/\sqrt{2}.$
\end{lemma}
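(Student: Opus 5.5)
The plan is to work with the error matrix $E^r := Z^r - \mathbf{1}\bar z^0$, show that it satisfies the same linear three-term recursion as $Z^r$, and then bound each eigencomponent of $W$ separately through its scalar recurrence. The delicate point will be making the final constant independent of $\chi$.

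\textbf{Mean preservation.} Multiply the update in Algorithm~\ref{algo:fastmix} on the left by $\frac1m\mathbf{1}^\top$. Using $\mathbf{1}^\top W=\mathbf{1}^\top$ (Assumption~\ref{ass_2.4}), the averages satisfy
\[
\bar z^{r+1}=(1+\eta)\bar z^r-\eta \bar z^{r-1}.
\]
Since $z_i^{-1}=z_i^0$, we have $\bar z^{-1}=\bar z^0$, and induction gives $\bar z^r=\bar z^0$ for all $r$. This is the first claim.

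\textbf{Reduction to scalar recurrences.} Because $W\mathbf{1}=\mathbf{1}$ and the coefficients satisfy $(1+\eta)-\eta=1$, the error obeys
\[
E^{r+1}=(1+\eta)WE^r-\eta E^{r-1},\qquad E^{-1}=E^0,
\]
and every column of $E^r$ is orthogonal to $\mathbf{1}$.

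Write $W=\sum_k\lambda_k u_ku_k^\top$ with orthonormal eigenvectors, $u_1=\mathbf{1}/\sqrt m$, and $0\le\lambda_k\le\lambda_2<1$ for $k\ge2$. This uses $0\preceq W$ and the null-space condition in Assumption~\ref{ass_2.4}. Projecting onto $u_k$ for $k\ge 2$, each coefficient (row vector) $e_k^r=u_k^\top E^r$ satisfies the scalar recurrence
\[
e^{r+1}=(1+\eta)\lambda e^r-\eta e^{r-1},\qquad e^{-1}=e^0 .
\]
By orthogonality, it then suffices to prove $|e_k^r|\le c_1(1-c_2\sqrt{\chi})^R|e_k^0|$ uniformly in $\lambda\in[0,\lambda_2]$.

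\textbf{The choice of $\eta$.} Set $s=\sqrt{1-\lambda_2^2}$. A direct computation gives $1+\eta=2/(1+s)$ and $4\eta/(1+\eta)^2=(1-s)(1+s)=\lambda_2^2$. Hence the characteristic polynomial $x^2-(1+\eta)\lambda x+\eta$ has nonpositive discriminant for every $\lambda\le\lambda_2$, so both roots have modulus exactly $\sqrt\eta$. Moreover $s\ge\sqrt{1-\lambda_2}=\sqrt\chi$, so
\[
\sqrt\eta\le\sqrt{\frac{1-\sqrt\chi}{1+\sqrt\chi}}\le 1-\sqrt\chi\cdot\Theta(1).
\]
This gives the base rate.

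\textbf{Closed form and the main obstacle.} Write $(1+\eta)\lambda=2\sqrt\eta\cos\theta$ with $\theta\in[0,\pi/2]$; the range uses $\lambda\ge0$. The solution is
\[
e^r=\eta^{r/2}\Big(\cos r\theta+\alpha\,\tfrac{\sin r\theta}{\sin\theta}\Big)e^0,
\qquad \alpha=\frac{1-\sqrt\eta\cos\theta}{\sqrt\eta}\cdot(\text{explicit factor}),
\]
obtained by matching $e^{-1}=e^0$.

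The hard step is controlling $\sin(r\theta)/\sin\theta$, which can be as large as $r$ near the double root $\lambda=\lambda_2$. Such a factor $r\,\eta^{r/2}$ cannot simply be absorbed into a constant independent of $\chi$. I would try first to exploit that the coefficient $\alpha$ is small exactly when $\theta$ is small: near $\lambda=\lambda_2$, the quantity $1-\sqrt\eta\cos\theta$ is of order $1-\sqrt\eta\approx\sqrt\chi$. The product of the two factors is then of order
\[
r\sqrt\chi\,\eta^{r/2}\le r\sqrt\chi\,(1-\sqrt\chi)^{r}.
\]
Write $(1-\sqrt\chi)^r=(1-c_2\sqrt\chi)^r\,(1-(1-c_2)\sqrt\chi)^r$. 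The piece $r\sqrt\chi\,(1-(1-c_2)\sqrt\chi)^r$ is bounded by the absolute constant $1/(e(1-c_2))$. This is exactly where the slack between $\sqrt\eta$ and the slower stated rate $1-c_2\sqrt\chi$, with $c_2=1-1/\sqrt2$, is spent.

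Squaring and summing the resulting bound over $k$ yields the Frobenius estimate. Tracking the constants in the two regimes, $\theta$ small versus $\theta$ bounded away from $0$, should produce $c_1=\sqrt{14}$. Alternatively, one may bound the norm of powers of the $2\times2$ companion matrix directly, which is how \citet{ye2023multi} obtain it.
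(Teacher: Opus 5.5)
Your proposal follows the right route, but the decisive uniform estimate is not closed as written.

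The paper itself gives no proof of this lemma. It imports Proposition~1 of \citet{ye2023multi}, which rests on the same spectral reduction you use: diagonalize $W$ and control the recurrences $e^{r+1}=(1+\eta)\lambda e^r-\eta e^{r-1}$ uniformly over $\lambda\in[0,\lambda_2]$. Your mean-preservation argument, the error recursion, the eigen-decomposition and the discriminant computation are all correct. Solving the scalar recurrence in closed form with the specific start $e^{-1}=e^0$ is a good idea. The problems are in the final estimate:
\begin{itemize}
\item The formula you give for $\alpha$ is wrong.
\item The bound $\eta^{r/2}\le(1-\sqrt\chi)^r$ that you later use does not follow from your chain, since $\sqrt{(1-a)/(1+a)}\ge 1-a$.
\item The split ``$\theta$ small versus $\theta$ bounded away from $0$'' cannot be made uniform in $\chi$. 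The natural threshold $\arccos\sqrt\eta$, where $\alpha$ changes sign, tends to $0$ with $\chi$. For intermediate angles such as $\theta\approx\chi^{1/8}$, neither $|\alpha|=O(\sqrt\chi)$ nor $1/\sin\theta=O(1)$ holds. Moreover $|\alpha|$ can be as large as $\sqrt\eta\approx 1$ (e.g.\ $\alpha=-\sqrt\eta$ at $\lambda=0$). What is needed there is a bound on the ratio $|\alpha|/\sin\theta$.
\end{itemize}

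Here is how to close it. Matching $e^{-1}=e^0$ gives $\alpha=\cos\theta-\sqrt\eta$, i.e.
\[
e_k^R=\eta^{R/2}\Bigl(\cos R\theta+(\cos\theta-\sqrt\eta)\,\tfrac{\sin R\theta}{\sin\theta}\Bigr)e_k^0 ,
\]
with $e_k^R=\eta^{R/2}\bigl(1+R(1-\sqrt\eta)\bigr)e_k^0$ at the double root.

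Split by the sign of $\alpha$:
\begin{itemize}
\item If $\cos\theta\ge\sqrt\eta$, then $0\le\alpha\le 1-\sqrt\eta$. Together with $|\sin R\theta|\le R\sin\theta$ this gives $\|e_k^R\|\le\eta^{R/2}\bigl(1+R(1-\sqrt\eta)\bigr)\|e_k^0\|$.
\item If $\cos\theta<\sqrt\eta$, consider the map $c\mapsto(\sqrt\eta-c)/\sqrt{1-c^2}$. Its derivative has the sign of $\sqrt\eta\,c-1<0$, so on $[0,\sqrt\eta)$ it is at most $\sqrt\eta\le 1$. This step uses $\lambda\ge 0$, i.e.\ $W\succeq 0$. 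Hence $\|e_k^R\|\le 2\eta^{R/2}\|e_k^0\|$.
\end{itemize}

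For the rates, use the exact identity $\sqrt\eta=\lambda_2/(1+\sqrt{1-\lambda_2^2})$. With $u=\sqrt\chi$, one checks that it gives $\sqrt\eta\le 1-u$ and $1-\sqrt\eta\le 2u$. Since $1-c_2=1/\sqrt2$, we have $1-u\le(1-c_2u)(1-u/\sqrt2)$. Also $2uR(1-u/\sqrt2)^R\le\sup_{x\ge0}2x\exp(-x/\sqrt2)=2\sqrt2\exp(-1)$.

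Both cases then give $\|e_k^R\|\le\bigl(1+2\sqrt2\exp(-1)\bigr)(1-c_2\sqrt\chi)^R\|e_k^0\|$. Because $1+2\sqrt2\exp(-1)\approx 2.04<\sqrt{14}$, summing the squares over $k\ge2$ proves the lemma.
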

To characterize the convergence of the multi-consensus steps, define
\begin{align*}
\rho_t := \sqrt{14} (1 - (1 - 1/\sqrt{2}) \sqrt{1 - \lambda_2(W)})^{R_t}.
\end{align*}
Then we have 
\begin{align*}        
    \| \mathtt{FastMix}(A, W, R_t) - \frac{1}{m}\mathbf{1}\mathbf{1}^\top \mathtt{FastMix}(A, W, R_t) \| \leq \rho_t \| A - \frac{1}{m}\mathbf{1}\mathbf{1}^\top A \|
\end{align*}
for any $A \in \mathbb{R}^{m\times d}$.

The following lemma shows a crucial property of gradient tracking.
\begin{lemma}
    For Algorithms 1 and 3, it holds that 
    \begin{align}
    \label{eq_tracking}
    \bar{s}^t = \bar{y}^t.
    \end{align} 
\end{lemma}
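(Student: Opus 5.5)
We prove $\bar{s}^t = \bar{y}^t$ by induction on $t$, using only the averaging-preservation property of \texttt{FastMix} from Lemma~\ref{lemma_1.1}: for any input $Z^0$, the output satisfies $\frac{1}{m}\mathbf{1}^\top Z^R = \frac{1}{m}\mathbf{1}^\top Z^0$. This is exact, not approximate. It holds because $W\mathbf{1}=\mathbf{1}$ and $W$ is symmetric, so $\mathbf{1}^\top W = \mathbf{1}^\top$. Hence the recursion $Z^{r+1} = (1+\eta)WZ^r - \eta Z^{r-1}$ preserves the column average, with $Z^0 = Z^{-1}$.

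For Algorithm~\ref{algo:d-nss}, the base case uses the initialization $s_i^{-1}=y_i^{-1}=0$, so $\bar{s}^{-1}=\bar{y}^{-1}=0$. For the inductive step, suppose $\bar{s}^{t-1}=\bar{y}^{t-1}$. The tracking update feeds \texttt{FastMix} the input $\{s_i^{t-1}+y_i^t-y_i^{t-1}\}_{i=1}^m$, and by Lemma~\ref{lemma_1.1} the output average equals the input average. Therefore
\begin{align*}
\bar{s}^t = \bar{s}^{t-1} + \bar{y}^t - \bar{y}^{t-1} = \bar{y}^t .
\end{align*}

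For Algorithm~\ref{algo:d-nss-vr}, the base case is $t=0$. Here $s^0$ is \texttt{FastMix} applied to $\{y_i^0\}$, so $\bar{s}^0=\bar{y}^0$ directly. For $t\ge 1$, the update $s_i^t=\texttt{FastMix}(\{s_i^{t-1}+y_i^t-y_i^{t-1}\},W,R_t)$ has the same form in every branch of the algorithm (large batch, recursive mini-batch, or skip with $y_i^t=y_i^{t-1}$). The telescoping argument above therefore applies verbatim. The way $y_i^t$ is formed is irrelevant, since only the averaging identity is used.

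No step is a real obstacle. The only point to check is that the averaging identity holds exactly for the momentum-type recursion in Algorithm~\ref{algo:fastmix}. Multiplying the recursion by $\frac{1}{m}\mathbf{1}^\top$ gives $\bar{z}^{r+1}=(1+\eta)\bar{z}^r-\eta\bar{z}^{r-1}$. Starting from $\bar{z}^0=\bar{z}^{-1}$, this yields $\bar{z}^r=\bar{z}^0$ for all $r$, by a trivial induction.
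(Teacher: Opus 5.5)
Your proof is correct and follows the paper's argument. Both use the exact average preservation of \texttt{FastMix} (Lemma~\ref{lemma_1.1}) to obtain $\bar{s}^t = \bar{s}^{t-1} + \bar{y}^t - \bar{y}^{t-1}$ and then conclude by induction from $\bar{s}^0 = \bar{y}^0$; your direct check of the averaging identity from the recursion and your separate base cases for the two algorithms are only elaborations of that same proof.
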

\begin{proof}
From Lemma~\ref{lemma_1.1}, combined with the update of $S^t$ in Algorithms 1 and 3, we have
\begin{align*}
    \bar{s}^t & = \frac{1}{m} \mathbf{1}^\top S^t \\
    & = \frac{1}{m} \mathbf{1}^\top \mathtt{FastMix}(S^{t-1} + Y^t-Y^{t-1}, W, R) \\
    & = \bar{s}^{t-1} + \bar{y}^t - \bar{y}^{t-1}.
\end{align*}
Note that $\bar{s}^0 = \bar{y}^0$. 
By induction, it is straightforward to verify that $\bar{s}^t = \bar{y}^t$ holds for $t\ge 0$.
\end{proof}

\section{The Proof of Theorem~\ref{thm_2.1}}
We first define the Lyapunov function
\begin{align*}
\Phi(t) := f(\bar{x}^t) + 2 m L^2 \eta C(t),
\end{align*}
where
$C(t) := \left\| X^t - \mathbf{1} \bar{x}^t \right\|^2 + \eta^2 \left\| S^t - \mathbf{1} \bar{s}^t \right\|^2$
represents the consensus error term.  We then derive upper bounds for $f(\bar{x}^t)$ and $C(t)$ in the Lyapunov function.

By the $L$‑smoothness of $f$, the following inequality holds for any $t \ge 0$:
\begin{align}
\label{eq_2.4}
f(\bar{x}^{t+1}) 
&\le f(\bar{x}^t) + \langle \nabla f(\bar{x}^t), \bar{x}^{t+1} - \bar{x}^t \rangle + \frac{L}{2} \| \bar{x}^{t+1} - \bar{x}^t \|^2 \notag\\
&= f(\bar{x}^t) - \langle \nabla f(\bar{x}^t), \eta \bar{y}^t \rangle + \frac{L \eta^2}{2} \| \bar{y}^t \|^2 \notag\\
&= f(\bar{x}^t) + \frac{\eta}{2} \left( \| \bar{y}^t - \nabla f(\bar{x}^t) \|^2 - \| \nabla f(\bar{x}^t) \|^2 - \| \bar{y}^t \|^2 \right) + \frac{L \eta^2}{2} \| \bar{y}^t \|^2 \notag\\
&= f(\bar{x}^t) - \frac{\eta}{2} \| \nabla f(\bar{x}^t) \|^2 - \left( \frac{\eta}{2} - \frac{L \eta^2}{2} \right) \| \bar{y}^t \|^2 + \frac{\eta}{2} \| \bar{y}^t - \nabla f(\bar{x}^t) \|^2.
\end{align}
Consider the gradient estimation error
$\mathbb{E}\left[\left\| \bar{y}^t - \nabla f(\bar{x}^t) \right\|^2\right]$
in the last term of \eqref{eq_2.4}. By the definition of $\bar{y}^t$, it follows that
\begin{align}
\label{eq_2.5}
\mathbb{E} \left[ \left\| \bar{y}^t - \nabla f(\bar{x}^t) \right\|^2 \right] 
&= \mathbb{E} \left[ \left\| \frac{1}{m} \sum_{i=1}^m \frac{1}{B_i} \sum_{j=1}^{B_i} g_i(x_i^t; \xi_{ij}) - \nabla f(\bar{x}^t) \right\|^2 \right] \notag\\
&\le 2 \, \mathbb{E} \left[ \left\| \frac{1}{m} \sum_{i=1}^m \left( \frac{1}{B_i} \sum_{j=1}^{B_i} g_i(x_i^t; \xi_{ij}) - \nabla f_i(x_i^t) \right) \right\|^2 \right] \notag\\
&\quad + 2 \, \mathbb{E} \left[ \left\| \frac{1}{m} \sum_{i=1}^m \left( \nabla f_i(x_i^t) - \nabla f_i(\bar{x}^t) \right) \right\|^2 \right] \notag\\
&= \frac{2}{m^2} \sum_{i=1}^m \frac{1}{B_i^2} \sum_{j=1}^{B_i} \mathbb{E} \left[ \left\| g_i(x_i^t; \xi_{ij}) - \nabla f_i(x_i^t) \right\|^2 \right] \notag\\
&\quad + 2 \, \mathbb{E} \left[ \left\| \frac{1}{m} \sum_{i=1}^m  \left( \nabla f_i(x_i^t) - \nabla f_i(\bar{x}^t) \right) \right\|^2 \right] \notag\\
&\le \frac{2}{m^2} \sum_{i=1}^m \frac{\sigma_i^2}{B_i} + \frac{2}{m} \sum_{i=1}^m \mathbb{E} \left[ \left\| \nabla f_i(x_i^t) - \nabla f_i(\bar{x}^t) \right\|^2 \right] \notag\\
&\le \frac{2}{m^2} \sum_{i=1}^m \frac{\sigma_i^2}{B_i} + 2mL^2 \, \mathbb{E} \left[ \left\| X^t - \mathbf{1} \bar{x}^t \right\|^2 \right],
\end{align}
where the second inequality follows from Assumption~\ref{ass_2.2} and the last inequality follows from Assumption~\ref{ass_2.3}.

We next establish the upper bound for the consensus error term $\| X^t - \mathbf{1} \bar{x}^t \|^2$.
Using the convergence property of the $\mathtt{FastMix}$ algorithm stated in Lemma \ref{lemma_1.1}, define
$\rho_t := \sqrt{14} (1 - (1 - 1/\sqrt{2}) \sqrt{1 - \lambda_2(W)})^{R_t}$.
Then 
\begin{align}
\label{eq_2.6}
& \quad \left\| X^{t+1} - \mathbf{1} \bar{x}^{t+1} \right\|^2 \notag\\
&= \left\| \mathtt{FastMix} \left( X^t - \eta S^t , W, R_t\right) - \frac{1}{m} \mathbf{1} \mathbf{1}^\top \mathtt{FastMix} \left( X^t - \eta S^t, W, R_t\right) \right\|^2 \notag\\
&\le \rho_t^2 \left\| X^t - \eta S^t - \frac{1}{m} \mathbf{1} \mathbf{1}^\top \left( X^t - \eta S^t \right) \right\|^2 \notag\\
&= \rho_t^2 \left\| X^t - \mathbf{1} \bar{x}^t - \eta \left( S^t - \mathbf{1} \bar{s}^t \right) \right\|^2 \notag\\
&\le 2 \rho_t^2 \left\| X^t - \mathbf{1} \bar{x}^t \right\|^2 + 2 \rho_t^2 \eta^2 \left\| S^t - \mathbf{1} \bar{s}^t \right\|^2,
\end{align}
where the first inequality follows from Lemma \ref{lemma_1.1}, the second from the fact that
$\|A - \tfrac{1}{m}\mathbf{1}\mathbf{1}^\top A\| \le \|A\|$ for any $A \in \mathbb{R}^{m\times d}$,
and the last inequality from Young’s inequality.

Furthermore, we bound the consensus error term of the gradient tracking variables
$\left\| S^t - \mathbf{1} \bar{s}^t \right\|^2$
in a similar manner:
\begin{align}
\label{eq_2.7}
& \quad \left\| S^{t+1} - \mathbf{1} \bar{s}^{t+1} \right\|^2 \notag\\
&= \left\| \mathtt{FastMix} \left( S^t + Y^{t+1} - Y^t, W, R_t \right) - \frac{1}{m} \mathbf{1} \mathbf{1}^\top \mathtt{FastMix} \left( S^t + Y^{t+1} - Y^t, W, R_t \right) \right\|^2 \notag\\
&\le \rho_t^2 \left\| S^t + Y^{t+1} - Y^t - \frac{1}{m} \mathbf{1} \mathbf{1}^\top \left( S^t + Y^{t+1} - Y^t \right) \right\|^2 \notag\\
&\le 2 \rho_t^2 \left\| S^t - \mathbf{1} \bar{s}^t \right\|^2 + 2 \rho_t^2 \left\| Y^{t+1} - Y^t - \frac{1}{m} \mathbf{1} \mathbf{1}^\top \left( Y^{t+1} - Y^t \right) \right\|^2 \notag\\
&\le 2 \rho_t^2 \left\| S^t - \mathbf{1} \bar{s}^t \right\|^2 + 2 \rho_t^2 \left\| Y^{t+1} - Y^t \right\|^2.
\end{align}
The last term $\mathbb{E} \left[ \left\| Y^{t+1} - Y^t \right\|^2 \right]$ of  \eqref{eq_2.7} has the upper bound
\begin{align}
\label{eq_2.8}
&\quad \mathbb{E} \left[ \left\| Y^{t+1} - Y^t \right\|^2 \right] \notag\\
&= \sum_{i=1}^m \mathbb{E} \left[ \left\| y_i^{t+1} - y_i^t \right\|^2 \right] \notag\\
&= \sum_{i=1}^m \mathbb{E} \left[ \left\| y_i^{t+1} - \nabla f_i(x_i^{t+1}) + \nabla f_i(x_i^{t+1}) - \nabla f_i(x_i^t) + \nabla f_i(x_i^t) - y_i^t \right\|^2 \right] \notag\\
&\le 3 \sum_{i=1}^m \mathbb{E} \left[ \left\| y_i^{t+1} - \nabla f_i(x_i^{t+1}) \right\|^2 \right]
+ 3 \sum_{i=1}^m \mathbb{E} \left[ \left\| \nabla f_i(x_i^{t+1}) - \nabla f_i(x_i^t) \right\|^2 \right] \notag\\
&\quad + 3 \sum_{i=1}^m \mathbb{E} \left[ \left\| y_i^t - \nabla f_i(x_i^t) \right\|^2 \right] \notag\\
&\le 6 \sum_{i=1}^m \frac{\sigma_i^2}{B_i}
+ 3 \sum_{i=1}^m m^2 L^2 \mathbb{E} \left[ \left\| x_i^{t+1} - x_i^t \right\|^2 \right] \notag\\
&\le 6 \sum_{i=1}^m \frac{\sigma_i^2}{B_i}
+ 9 m^2 L^2 \mathbb{E} \left[ \left\| X^{t+1} - \mathbf{1} \bar{x}^{t+1} \right\|^2 \right]
+ 9 m^2 L^2 \mathbb{E} \left[ \left\| X^t - \mathbf{1} \bar{x}^t \right\|^2 \right] \notag\\
&\quad + 9 m^3 L^2 \mathbb{E} \left[ \left\| \bar{x}^{t+1} - \bar{x}^t \right\|^2 \right] \notag\\
&\le 6 \sum_{i=1}^m \frac{\sigma_i^2}{B_i}
+ \left( 18 m^2 L^2 \rho_t^2 + 9 m^2 L^2 \right) \mathbb{E} \left[ \left\| X^t - \mathbf{1} \bar{x}^t \right\|^2 \right] \\
&\quad + 18 m^2 L^2 \rho_t^2 \eta^2 \mathbb{E} \left[ \left\| S^t - \mathbf{1} \bar{s}^t \right\|^2 \right]
+ 9 m^3 L^2 \eta^2 \mathbb{E} \left[ \left\| \bar{y}^t \right\|^2 \right]. \notag
\end{align}
Combining the results of \eqref{eq_2.7} and \eqref{eq_2.8}, we obtain
\begin{align}
\label{eq_2.9}
\mathbb{E} \left[ \left\| S^{t+1} - \mathbf{1} \bar{s}^{t+1} \right\|^2 \right]
&\le 2 \rho_t^2 \, \mathbb{E} \left[ \left\| S^t - \mathbf{1} \bar{s}^t \right\|^2 \right] 
+ 2 \rho_t^2 \, \mathbb{E} \left[ \left\| Y^{t+1} - Y^t \right\|^2 \right] \notag\\
&\le \left( 2 \rho_t^2 + 36 m^2 L^2 \eta^2 \rho_t^4 \right) \mathbb{E} \left[ \left\| S^t - \mathbf{1} \bar{s}^t \right\|^2 \right] \\
&\quad + \left( 36 m^2 L^2 \rho_t^4 + 18 m^2 L^2 \rho_t^2 \right) \mathbb{E} \left[ \left\| X^t - \mathbf{1} \bar{x}^t \right\|^2 \right] \notag\\
&\quad + 18 m^3 L^2 \eta^2 \rho_t^2 \, \mathbb{E} \left[ \left\| \bar{y}^t \right\|^2 \right]
+ 12 \rho_t^2 \sum_{i=1}^m \frac{\sigma_i^2}{B_i}.\notag
\end{align}

From \eqref{eq_2.6} and \eqref{eq_2.9}, the following relation holds for $C(t)$ in the Lyapunov function
\begin{align}
\label{eq_2.10}
\mathbb{E} \left[ C(t+1) \right]
&= \mathbb{E} \left[ \left\| X^{t+1} - \mathbf{1} \bar{x}^{t+1} \right\|^2 \right] 
+ \eta^2 \, \mathbb{E} \left[ \left\| S^{t+1} - \mathbf{1} \bar{s}^{t+1} \right\|^2 \right] \notag\\
&\le 2 \rho_t^2 \, \mathbb{E} \left[ \left\| X^t - \mathbf{1} \bar{x}^t \right\|^2 \right]
+ 2 \rho_t^2 \eta^2 \, \mathbb{E} \left[ \left\| S^t - \mathbf{1} \bar{s}^t \right\|^2 \right] \notag\\
&\quad + \left( 2 \rho_t^2 + 36 m^2 L^2 \eta^2 \rho_t^4 \right) \eta^2 \, \mathbb{E} \left[ \left\| S^t - \mathbf{1} \bar{s}^t \right\|^2 \right] \notag\\
&\quad + \left( 36 m^2 L^2 \rho_t^4 + 18 m^2 L^2 \rho_t^2 \right) \eta^2 \, \mathbb{E} \left[ \left\| X^t - \mathbf{1} \bar{x}^t \right\|^2 \right] \notag\\
&\quad + 18 m^3 L^2 \eta^4 \rho_t^2 \, \mathbb{E} \left[ \left\| \bar{y}^t \right\|^2 \right]
+ 12 \rho_t^2 \eta^2 \sum_{i=1}^m \frac{\sigma_i^2}{B_i} \notag\\
&\le 2 \rho_t^2 \left( 1 + 27 m^2 L^2 \eta^2 \right) \mathbb{E} \left[ \left\| X^t - \mathbf{1} \bar{x}^t \right\|^2 \right] \notag\\
&\quad + 2 \rho_t^2 \eta^2 \left( 2 + 18 m^2 L^2 \eta^2 \right) \mathbb{E} \left[ \left\| S^t - \mathbf{1} \bar{s}^t \right\|^2 \right] \notag\\
&\quad + 18 m^3 L^2 \eta^4 \rho_t^2 \, \mathbb{E} \left[ \left\| \bar{y}^t \right\|^2 \right]
+ 12 \rho_t^2 \eta^2 \sum_{i=1}^m \frac{\sigma_i^2}{B_i} \notag\\
&= 2 \rho_t^2 \left( 27 m^2 L^2 \eta^2 + 2 \right) \mathbb{E} \left[ C(t)\right] 
+ 18 m^3 L^2 \eta^4 \rho_t^2 \, \mathbb{E} \left[ \left\| \bar{y}^t \right\|^2 \right] \\
& \quad + 12 \rho_t^2 \eta^2 \sum_{i=1}^m \frac{\sigma_i^2}{B_i}. \notag
\end{align}

Combining the results of \eqref{eq_2.4}, \eqref{eq_2.5}, and \eqref{eq_2.10}, we obtain the recursive relation of the Lyapunov function $\Phi(t)$:
\begin{align*}
\mathbb{E} \left[ \Phi(t+1) \right] 
&= \mathbb{E} \left[ f(\bar{x}^{t+1}) \right] + 2 m L^2 \eta \, \mathbb{E} \left[ C(t+1) \right] \\
&\le \mathbb{E} \left[ f(\bar{x}^t) \right] - \frac{\eta}{2} \mathbb{E} \left[ \left\| \nabla f(\bar{x}^t) \right\|^2 \right]
- \left( \frac{\eta}{2} - \frac{L \eta^2}{2} \right) \mathbb{E} \left[ \left\| \bar{y}^t \right\|^2 \right] \\
&\quad + \frac{\eta}{m^2} \sum_{i=1}^m \frac{\sigma_i^2}{B_i} + \eta m L^2 \mathbb{E} \left[ C(t) \right] \\
&\quad + 4 m L^2 \eta \rho_t^2 \left( 27 m^2 L^2 \eta^2 + 2 \right) \mathbb{E} \left[ C(t) \right] 
+ 36 m^4 L^4 \eta^5 \rho_t^2 \mathbb{E} \left[ \left\| \bar{y}^t \right\|^2 \right] \\
&\quad + 24 m L^2 \eta^3 \rho_t^2 \sum_{i=1}^m \frac{\sigma_i^2}{B_i} \\
&= \mathbb{E} \left[ f(\bar{x}^t) \right] - \frac{\eta}{2} \mathbb{E} \left[ \left\| \nabla f(\bar{x}^t) \right\|^2 \right]
- \left( \frac{\eta}{2} - \frac{L \eta^2}{2} - 36 m^4 L^4 \eta^5 \rho_t^2 \right) \mathbb{E} \left[ \left\| \bar{y}^t \right\|^2 \right] \\
&\quad + \left( 4 m L^2 \eta \left( 27 m^2 L^2 \eta^2 + 2 \right) \rho_t^2 + m L^2 \eta \right) \mathbb{E} \left[ C(t) \right] \notag\\
&\quad + \left( \frac{\eta}{m^2} + 24 m L^2 \eta^3 \rho_t^2 \right) \sum_{i=1}^m \frac{\sigma_i^2}{B_i} \\
&\le \mathbb{E} \left[ f(\bar{x}^t) \right] + \frac{3 m L^2 \eta}{2} \, \mathbb{E} \left[ C(t) \right] 
- \frac{\eta}{2} \mathbb{E} \left[ \left\| \nabla f(\bar{x}^t) \right\|^2 \right]
+ \frac{2 \eta}{m^2} \sum_{i=1}^m \frac{\sigma_i^2}{B_i} \\
&= \mathbb{E} \left[ \Phi(t) \right]
- \frac{\eta}{2} \mathbb{E} \left[ \left\| \nabla f(\bar{x}^t) \right\|^2 \right]
+ \frac{2 \eta}{m^2} \sum_{i=1}^m \frac{\sigma_i^2}{B_i} - \frac{m L^2 \eta}{2} \, \mathbb{E} \left[ C(t) \right],
\end{align*}
where the last inequality follows from the choice of $R_t$. 
Specifically, by Lemma~\ref{lemma_1.1}, letting
\begin{equation*}
\begin{aligned}
\label{eq_b5.4}
R_t = \left\lceil \frac{2+\sqrt{2}}{2\sqrt{\chi}}\log\left( 14 \max\left\{ 9m^4, 70m^2, 6m^3\right\}\right)\right\rceil,
\end{aligned}
\end{equation*}
for $t \ge 1$, it holds that
\begin{align*}
& \frac{\eta}{2} - \frac{L \eta^2}{2} - 36 m^4 L^4 \eta^5 \rho_t^2 \ge 0, \\
& 4 m L^2 \eta \left( 27 m^2 L^2 \eta^2 + 2 \right) \rho_t^2 \le \frac{m L^2 \eta}{2}, \\
&24 m L^2 \eta^3 \rho_t^2 \le \frac{\eta}{m^2}.
\end{align*}

From the recursive relation of the Lyapunov function, we obtain the following estimate for the average gradient norm
\begin{align*}
\frac{1}{T} \sum_{t=0}^{T-1} \mathbb{E} \left[ \left\| \nabla f(\bar{x}^t) \right\|^2 \right] 
&\le \mathbb{E} \left[ \frac{2 \left( \Phi(0) - \Phi(T) \right)}{\eta T} \right] 
+ \frac{4}{m^2} \sum_{i=1}^m \frac{\sigma_i^2}{B_i} -\frac{1}{T} \sum_{t=0}^{T-1} mL^2 \mathbb{E} \left[ C(t) \right].
\end{align*}
The difference $\Phi(0)-\Phi(T)$ of the Lyapunov function in the above inequality can be bounded as
\begin{align*}
\mathbb{E} \left[ \Phi(0) - \Phi(T) \right] 
&\le f(\bar{x}^0) - f^* + 2 m L^2 \eta^3 \, \mathbb{E} \left[ \left\| S^0 - \mathbf{1} \bar{s}^0 \right\|^2 \right] \le \Delta + \frac{\epsilon^2 \eta T}{16},
\end{align*}
where the inequality holds by the parameter setting of 
\begin{align*}
R_0 = \left\lceil \frac{2+\sqrt{2}}{2\sqrt{\chi}}\log\left( 1 + \frac{448mL^2\eta^2 \sum_{i=1}^m \|y_i^0\|^2}{T \epsilon^2} \right) \right\rceil.
\end{align*}

Then the expectation of the averaged gradient norm satisfies
\begin{align*}
\frac{1}{T} \sum_{t=0}^{T-1} \mathbb{E} \left[ \left\| \nabla f(\bar{x}^t) \right\|^2 \right] 
&\le \mathbb{E} \left[ \frac{2 \left( \Phi(0) - \Phi(T) \right)}{\eta T} \right] 
+ \frac{4}{m^2} \sum_{i=1}^m \frac{\sigma_i^2}{B_i} -\frac{1}{T} \sum_{t=0}^{T-1} mL^2 \, \mathbb{E} \left[ C(t) \right] \\
&\le \frac{2 \Delta}{\eta T} + \frac{\epsilon^2}{8} +\frac{4}{m^2} \sum_{i=1}^m \frac{\sigma_i^2}{B_i} -\frac{1}{T} \sum_{t=0}^{T-1} mL^2 \mathbb{E} \left[ C(t) \right].\\
\end{align*}

Therefore, the final output of the algorithm satisfies the following error bound:
\begin{align*}
& \quad \mathbb{E} \left[ \left\|\nabla f\left(x_{i, \text{out}}\right)\right\|^2 \right] \\
&= \frac{1}{T} \sum_{t=0}^{T-1} \mathbb{E}\left\|\nabla f\left(x_i^t\right)\right\|^2 \\
&\leq \frac{2}{T} \sum_{t=0}^{T-1} \mathbb{E}\left[\| \nabla f(\bar{x}^t) \|^2 + \| \nabla f(x_i^t) - \nabla f(\bar{x}^t) \|^2\right] \\
&\leq \frac{2}{T} \sum_{t=0}^{T-1} \mathbb{E}\left[\|\nabla f(\bar{x}^t)\|^2 + L^2 \left\|x_i^t - \bar{x}^t\right\|^2\right] \\
&\leq \frac{2}{T} \sum_{t=0}^{T-1} \mathbb{E} \|\nabla f(\bar{x}^t)\|^2 + \frac{2 L^2}{T} \sum_{t=0}^{T-1} \mathbb{E} \|X^t - \mathbf{1} \bar{x}^t\|^2 \\
&\leq \frac{4 \Delta}{\eta T} + \frac{\epsilon^2}{4} +\frac{8}{m^2} \sum_{i=1}^m \frac{\sigma_i^2}{B_i} \\
&\leq \frac{\epsilon^2}{4} + \frac{\epsilon^2}{4} + \frac{\epsilon^2}{2} \\
& = \epsilon^2,
\end{align*}
where the last inequality is based on the parameter settings in Theorem~\ref{thm_2.1}.

The sampling complexity of the algorithm is bounded by
\begin{align*}
\left( \sum_{i=1}^m B_i \right) T 
&= \left( \sum_{i=1}^m \left\lceil \frac{16\sigma_i \sum_{j=1}^{m}\sigma_j}{m^2 \epsilon^2} \right\rceil \right) \cdot \left\lceil \frac{32 \Delta L}{\epsilon^2} \right\rceil \\
&= O \left( \frac{\Delta L\, \bar{\sigma}_{\mathrm{AM}}^2}{\epsilon^4} + \frac{m \Delta L}{\epsilon^2} \right),
\end{align*}
and the communication complexity is bounded by
\begin{align*}
\sum_{i=0}^{T-1} R_i
&= \tilde{O} \left( \frac{\Delta L}{\sqrt{\chi}\epsilon^2}\right).
\end{align*}

\section{The Proof of Theorem~\ref{thm_3.1}}
In this section, we establish the complexity lower bound stated in Theorem~\ref{thm_3.1}. 
\citet{arjevani2023lower} establish the sampling complexity lower bound for stochastic optimization algorithms in the single‑machine setting, with the proof based on the construction of the following function:
\begin{align}
\label{eq_FD}
    F_D(x) &:= -\psi(1) \phi([x]_1) + \sum_{i=2}^D \left[ \psi(-[x]_{i-1}) \phi(-[x]_i) - \psi([x]_{i-1}) \phi([x]_i) \right],
\end{align}
where $D$ is the dimension of $x$, $[x]_i$ denotes the $i$‑th coordinate of $x$, and the functions $\psi(\cdot)$, $\phi(\cdot)$ are defined as
\begin{align*}
   \psi(t) = \begin{cases}
        0, & t \leq \frac{1}{2} \\
        \exp\left(1 - \frac{1}{(2t-1)^2}\right), & t > \frac{1}{2}
    \end{cases}, \qquad \phi(t) = \int_{-\infty}^t e^{-\frac{1}{2}\tau^2} d\tau. 
\end{align*}

This function has the following properties.
\begin{lemma}[{\citet{carmon2020lower}}]
\label{lemma_2.1}
The function $F_D$ satisfies the following properties:
\begin{enumerate}
\item $F_D(0) - \inf_x F_D(x) \leq \Delta_0 D$, where $\Delta_0 = 12$;
\item $F_D$ is $\ell_1$‑Lipschitz continuous with $\ell_1 = 152$;
\item If $[x]_D = 0$, then $\|\nabla F_D(x)\| > 1$.
\end{enumerate}
\end{lemma}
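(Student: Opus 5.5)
The three properties are inherited from the construction of \citet{carmon2020lower}, and I would verify them directly from elementary facts about $\psi$ and $\phi$. First I would record the following facts. $\psi$ is $C^\infty$, and $\psi(t)=\psi'(t)=0$ for $t\le \tfrac12$. We have $0\le \psi< e$, $\psi$ is nondecreasing with $\psi(t)\ge \psi(1)=1$ for $t\ge 1$, and $\psi'$, $\psi''$ are bounded by absolute constants (roughly $\sqrt{54/e}$ and $32.5$). For $\phi$, we have $0<\phi<\sqrt{2\pi}$, $0<\phi'(t)=e^{-t^2/2}\le 1$, $|\phi''|\le 1$, and $\phi'(t)>e^{-1/2}$ for $|t|<1$. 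Everything below is bookkeeping with these bounds.

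For property 1, I would note that $\psi(0)=0$ kills every summand with $i\ge 2$ at $x=0$, so $F_D(0)=-\psi(1)\phi(0)=-\sqrt{\pi/2}$. For the infimum, each of the $D$ bracketed summands (and the first term) is bounded below by $-\sup\psi\cdot\sup\phi\ge -e\sqrt{2\pi}$, because the positive parts only help. This gives $\inf F_D\ge -e\sqrt{2\pi}\,D$, so the gap is at most $(\sqrt{\pi/2}+e\sqrt{2\pi})D\le 12D$.

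For property 2, I would compute the Hessian. $F_D$ couples only consecutive coordinates, so $\nabla^2F_D$ is tridiagonal. Its diagonal entry at $j$ collects terms of the form $\psi(\pm x_{j-1})\phi''(\pm x_j)$ and $\psi''(\pm x_j)\phi(\pm x_{j+1})$, and its off-diagonal entries are of the form $\psi'(\pm x_j)\phi'(\pm x_{j+1})$. Using that only one of $\psi(t),\psi(-t)$ is nonzero for any given $t$, each entry is bounded by products of the constants above. Bounding the operator norm by the maximal absolute row sum (Gershgorin) then yields $\ell_1=152$.

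Property 3 is the real content, and I expect it to be the main obstacle. Given $[x]_D=0$, I would let $j$ be the smallest index with $|[x]_j|<1$, which exists since $j\le D$. Then either $j=1$ or $|[x]_{j-1}|\ge 1$. Writing out $\partial_jF_D$ gives
\begin{align*}
\partial_j F_D(x) &= -\psi(-[x]_{j-1})\phi'(-[x]_j)-\psi([x]_{j-1})\phi'([x]_j) -\psi'(-[x]_j)\phi(-[x]_{j+1})-\psi'([x]_j)\phi([x]_{j+1}),
\end{align*}
with the convention $\psi(-[x]_0):=0$ and $\psi([x]_0):=\psi(1)$. All four terms are nonpositive, so no cancellation occurs. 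Because $|[x]_{j-1}|\ge1$, one of the first two terms has $\psi\ge1$, which gives a contribution of at least $e^{-[x]_j^2/2}$.

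This alone gives only $>e^{-1/2}$, so the delicate step is to combine it with the $\psi'$ terms when $\tfrac12<|[x]_j|<1$. I would first try a case split. If $|[x]_j|$ is small, $e^{-[x]_j^2/2}$ is already close to $1$. If $|[x]_j|$ is close to $1$, the relevant $\psi'(|[x]_j|)$ is large, and I would need to lower-bound $\phi(\pm[x]_{j+1})$, possibly by passing to the coordinate $j+1$ when that $\phi$ factor is small. If the constant $1$ cannot be reached this way, I would fall back on the precise bound stated in \citet{carmon2020lower} (their Lemma on $F_D$, which asserts $|\partial_j F_D(x)|>1$ for this $j$) and cite it, since the lemma is attributed there.
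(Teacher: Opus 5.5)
Your outline for properties 1 and 2 is fine, but property 3 has a genuine gap, and no cleverer case split will close it. With $\phi(t)=\int_{-\infty}^t e^{-\tau^2/2}\,d\tau$ taken literally, as you use it, the claim is false.

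At $x=0$ every $\psi(0)$ and $\psi'(0)$ vanishes, so $\nabla F_D(0)=-\psi(1)\phi'(0)\,e_1=-e_1$ and $\|\nabla F_D(0)\|=1$, not $>1$. At $x=\tfrac{1}{2}e_1$ with $D\ge 2$, both $\psi$ and $\psi'$ vanish at $\pm\tfrac{1}{2}$, so $\|\nabla F_D(x)\|=e^{-1/8}<1$ even though $[x]_D=0$.

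In these examples $|[x]_j|\le\tfrac{1}{2}$, where the $\psi'$ terms are identically zero. So there is nothing to combine $e^{-[x]_j^2/2}$ with, and ``close to $1$'' never becomes ``$>1$''. Since all derivatives of $\psi$ vanish at $\tfrac{1}{2}$, the same failure persists slightly above $\tfrac{1}{2}$. Your fallback citation does not rescue this: the function in \citet{carmon2020lower} is $\sqrt{e}$ times yours, so their bound only gives $\|\nabla F_D(x)\|>e^{-1/2}$ for the $F_D$ you are working with.

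The missing ingredient is the normalization used in \citet{carmon2020lower} and \citet{arjevani2023lower}, which the paper's display drops: $\phi(t)=\sqrt{e}\int_{-\infty}^t e^{-\tau^2/2}\,d\tau$. Then $\phi'(t)=e^{(1-t^2)/2}>1$ for $|t|<1$. The term you already isolated now finishes the proof in one line, which is exactly the cited argument and needs no case split. Take $j$ the smallest index with $|[x]_j|<1$. All terms of $\partial_j F_D(x)$ are nonpositive, and $|\partial_j F_D(x)|\ge\psi(|[x]_{j-1}|)\,\phi'(\pm[x]_j)>1$.

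You must then redo the bookkeeping for properties 1 and 2 with $\sup\phi=\sqrt{2\pi e}$, $\sup\phi'=\sqrt{e}$ and $\sup|\phi''|=1$:
\begin{itemize}
\item For property 1, use $F_D(0)<0$, which bounds the gap by $e\sqrt{2\pi e}\,D\approx 11.2D$. Your bound $(|F_D(0)|+\sup\psi\sup\phi)D$ would become about $13.3D>12D$.
\item For property 2, your Gershgorin row sum becomes $e+32.5\sqrt{2\pi e}+2\sqrt{54}\approx 151.7$. This is precisely where $\ell_1=152$ comes from, and it is consistent with reading the property as $\nabla F_D$ being $152$-Lipschitz.
\end{itemize}
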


The stochastic gradient in the single‑machine setting \cite{arjevani2023lower} is constructed as
\begin{align}
\label{eq_G1}
\left[ G_D(x, \xi; p) \right]_i := \nabla_i F_D(x) \cdot 
\left( 
1 + \mathbb{I}\left\{ i > \mathrm{prog}_{0}(x) \right\} \cdot \left( \frac{\xi}{p} - 1 \right)
\right),
\end{align}
where $\xi \sim \mathrm{Bernoulli}(p)$ and $\mathrm{prog}_{0}(x) := \max\{i \ge 0| |[x]_i| > 0\}$.
The stochastic gradient $G_D$ is unbiased and has bounded variance.
\begin{lemma}[{\citet{arjevani2023lower}}]
\label{lemma_2.2}
The stochastic gradient $G_D(x,\xi;p)$ satisfies the following properties:
\begin{enumerate}
\item $\mathbb{E}_{\xi}\left[G_D(x,\xi;p)\right] = \nabla F_D(x)$;
\item The variance satisfies
\begin{align*}
\mathbb{E}_{\xi} \left[\left\| G_D(x,\xi;p) - \nabla F_D(x) \right\|^2\right]
\leq a^2 \cdot \frac{1-p}{p},
\end{align*}
for all $x \in \mathbb{R}^D$, where $a = 23$.
\end{enumerate}
\end{lemma}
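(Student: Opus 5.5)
The plan is a direct computation coordinate by coordinate. It relies on two structural facts about $F_D$ from \eqref{eq_FD}: the \emph{zero-chain} property, and a uniform bound on the entries of $\nabla F_D$. Write $k := \mathrm{prog}_0(x)$. For coordinates $i \le k$ the indicator in \eqref{eq_G1} vanishes, so $[G_D(x,\xi;p)]_i = \nabla_i F_D(x)$ deterministically. For $i > k$ we have $[G_D]_i = \nabla_i F_D(x)\,\xi/p$.

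\textbf{Unbiasedness.} Since $\mathbb{E}[\xi/p] = 1$ for $\xi\sim\mathrm{Bernoulli}(p)$, every coordinate has expectation $\nabla_i F_D(x)$. This gives the first claim immediately.

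\textbf{Variance.} The error is
\[
\left\|G_D - \nabla F_D\right\|^2 = \sum_{i>k} |\nabla_i F_D(x)|^2 \left(\xi/p - 1\right)^2 .
\]
Its expectation equals $\frac{1-p}{p}\sum_{i>k}|\nabla_i F_D(x)|^2$, using $\mathbb{E}[(\xi/p-1)^2] = \frac{1-p}{p}$. The key step is to show that only the single coordinate $i = k+1$ can contribute. Take $i \ge k+2$, so that $[x]_{i-1} = [x]_i = 0$. The only terms of $F_D$ involving $[x]_i$ are
\[
\psi(\mp[x]_{i-1})\phi(\mp[x]_i) \quad\text{and}\quad \psi(\mp[x]_i)\phi(\mp[x]_{i+1}).
\]
Differentiating in $[x]_i$ produces factors $\psi(\pm[x]_{i-1})=\psi(0)$ and $\psi'(\pm [x]_i)=\psi'(0)$. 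Both vanish because $\psi\equiv 0$ on $(-\infty,1/2]$. Hence $\nabla_i F_D(x) = 0$ for $i \ge k+2$. (If $k = D$ the sum is empty and the bound is trivial.) The variance is therefore at most $\frac{1-p}{p}\,\|\nabla F_D(x)\|_\infty^2$.

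\textbf{Sup-norm bound.} It remains to show $|\nabla_i F_D(x)| \le a = 23$ for all $x$ and $i$. I would use the elementary bounds
\[
0\le\psi\le e,\qquad 0\le\psi'\le\sqrt{54/e},\qquad 0\le\phi\le\sqrt{2\pi},\qquad 0\le\phi'\le 1,
\]
which are all standard from \citet{carmon2020lower}. Writing out $\nabla_i F_D$ gives two pieces.
\begin{itemize}
\item The piece from the $(i-1,i)$ term is $-\psi(-[x]_{i-1})\phi'(-[x]_i) - \psi([x]_{i-1})\phi'([x]_i)$. At most one of $\psi(\pm[x]_{i-1})$ is nonzero, so this piece is bounded by $e$.
\item The piece from the $(i,i+1)$ term is $-\psi'(-[x]_i)\phi(-[x]_{i+1}) - \psi'([x]_i)\phi([x]_{i+1})$. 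Again at most one summand is nonzero, so it is bounded by $\sqrt{54/e}\cdot\sqrt{2\pi}$.
\end{itemize}
The total is about $2.72 + 11.2 < 23$. The first coordinate has $\psi(1)=1$ in place of $\psi(\pm[x]_0)$ and is handled identically.

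The main obstacle is the verification of these scalar bounds, mainly $\sup\psi' = \sqrt{54/e}$, which requires maximizing an explicit one-variable function. If needed I would simply cite these bounds from \citet{carmon2020lower} rather than re-deriving them.
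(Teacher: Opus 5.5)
Your proposal is correct and follows essentially the standard argument behind the cited result; the paper gives no proof of its own and defers to \citet{arjevani2023lower}. That argument is: unbiasedness from $\mathbb{E}[\xi/p]=1$, the zero-chain structure confining all noise to the single coordinate $\mathrm{prog}_0(x)+1$, and the uniform bound $\|\nabla F_D\|_\infty\le 23$ from \citet{carmon2020lower}. One small point: Carmon et al.\ scale $\phi$ by an extra factor $\sqrt{e}$, so their stated bounds are $\phi<\sqrt{2\pi e}$ and $\phi'\le\sqrt{e}$, which is where $e^{3/2}+\sqrt{108\pi}\approx 22.9<23$ comes from; your bounds $\sqrt{2\pi}$ and $1$ are the correct ones for this paper's unscaled $\phi$, and either way $a=23$ holds.
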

Moreover, the stochastic gradient $G_D(x,\xi;p)$ forms a probability‑$p$ zero‑chain. Informally, this means that starting from $x^{(0)} = 0$, the number of nonzero coordinates can increase by at most one at each iteration with probability $p$. This yields the following key lemma.
\begin{lemma}[{\citet[Lemma 1]{arjevani2023lower}}]
\label{lemma_2.3}
If the total number of samples used during the first $t$ iterations does not exceed $(D-1)/(2p)$, then the probability that $[x^t]_D = 0$ is at least $1/2$.
\end{lemma}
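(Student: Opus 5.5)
The plan is a coupling/counting argument on the \emph{global progress} of the algorithm, followed by Markov's inequality. For a set $\mathcal{M}\subset\mathbb{R}^D$ define $\mathrm{prog}_0(\mathcal{M}) := \max_{x\in\mathcal{M}} \mathrm{prog}_0(x)$. Let $P_t := \max_i \mathrm{prog}_0(\mathcal{M}_{i,t})$ be the largest index of a nonzero coordinate in any node's memory at time $t$. Initially all memories are spanned by $x^{(0)}=0$, so $P_0=0$. The event $[x^t]_D\neq 0$ for some output $x^t\in\mathcal{M}_{i,t}$ forces $P_t\ge D$. It therefore suffices to show $\Pr(P_t\ge D)\le 1/2$ whenever the total number of oracle queries $N$ up to time $t$ satisfies $N\le (D-1)/(2p)$.

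\textbf{Step 1: structural invariants of $F_D$ and $G_D$.} From \eqref{eq_FD}, $\nabla_j F_D(x)=0$ for every $j>\mathrm{prog}_0(x)+1$, because $\psi$ and $\psi'$ vanish at $0$ while $\phi'>0$ enters only through the adjacent coordinate; this is the zero-chain property of $F_D$. In \eqref{eq_G1}, every coordinate $j>\mathrm{prog}_0(x)$ is multiplied by $\xi/p$. Hence:
\begin{itemize}
\item if $\xi=0$, then $\mathrm{prog}_0(G_D(x,\xi;p))\le \mathrm{prog}_0(x)$;
\item if $\xi=1$, then $\mathrm{prog}_0(G_D(x,\xi;p))\le \mathrm{prog}_0(x)+1$.
\end{itemize}
Taking spans (computation memory) and unions of neighbours' memories (communication memory, including delayed ones) never creates a nonzero coordinate beyond the current maximum. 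So in Definition~\ref{def_3.1} the quantity $P_t$ changes only through oracle queries, and each query raises it by at most one. A query can raise $P_t$ only if its fresh sample has $\xi=1$.

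\textbf{Step 2: domination by a Bernoulli count.} Order all oracle queries made across all nodes up to time $t$ as $k=1,\dots,N$, breaking ties within a time step arbitrarily. Let $\xi^{(k)}$ be the associated samples. Each $\xi^{(k)}$ is drawn independently of the history $\mathcal{F}_{k-1}$ generated by the previous queries, so $\Pr(\xi^{(k)}=1\mid\mathcal{F}_{k-1})=p$. By Step 1, $P_t\le S_N:=\sum_{k=1}^N \xi^{(k)}$. If $N$ is itself random, we only use $N\le N_{\max}:=\lfloor (D-1)/(2p)\rfloor$: pad with dummy independent Bernoulli$(p)$ draws up to $N_{\max}$, which only enlarges the sum. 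Linearity of expectation then gives $\mathbb{E}[S]\le pN_{\max}\le (D-1)/2$.

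\textbf{Step 3: Markov.} Reaching coordinate $D$ from $P_0=0$ requires $D$ unit increments, so
\[
\Pr([x^t]_D\ne 0)\le \Pr(S\ge D)\le \frac{\mathbb{E}[S]}{D}\le \frac{D-1}{2D}<\frac12 .
\]
This is the claim.

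The main obstacle is Step 1 in the decentralized setting. One must check that arbitrary span operations, delayed communication and the freedom of adaptive query points cannot ``jump'' progress, and that the per-query success probability stays at most $p$ conditionally on the whole network history. The natural way to handle this is to state the invariant ``every memory lies in $\mathrm{span}(e_1,\dots,e_{P})$'' and verify it inductively over the three memory-update rules of Definition~\ref{def_3.1}.
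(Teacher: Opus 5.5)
Your proof is correct. It shares the skeleton of the argument in Arjevani et al.\ that the paper cites, but closes it with a different tail bound. The shared part: each oracle call advances $\mathrm{prog}_0$ by at most one coordinate, and only on a fresh success $\xi=1$, so progress is dominated by a sum $S$ of conditionally Bernoulli$(p)$ variables. They bound the tail of this sum through its exponential moment, $\Pr(S\ge D)\le e^{-D}\,\mathbb{E}[e^{S}]\le e^{2pN-D}$. That gives failure probability $\delta$ whenever $N\le (D-\log(1/\delta))/(2p)$, and the stated version follows with $\delta=1/2$ since $\log 2<1$. You instead apply Markov's inequality to $S$ directly.

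The Chernoff route gives high-probability guarantees at only a $\log(1/\delta)$ cost. Markov gives only constant probability, but that is all the lemma asks for, and it has a real advantage. Because the decision to make the $k$-th query is $\mathcal{F}_{k-1}$-measurable, $\mathbb{E}[S_N]=\sum_k\mathbb{E}[\mathbb{I}\{N\ge k\}\,\mathbb{E}[\xi^{(k)}\mid\mathcal{F}_{k-1}]]=p\,\mathbb{E}[N]$. So if you drop the padding step, your argument needs only a bound on $\mathbb{E}[N]$, not an almost-sure bound on $N$.

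That is exactly what the proof of Theorem~\ref{thm_3.1} has available, since it controls only $\mathbb{E}[\#\mathrm{SFO}_i]$. It would let one replace the paper's conditioning on $\{\#\mathrm{SFO}_i<(D_i-1)/(2p_i)\}$ with the direct bound $\Pr([U_ix]_{D_i}\neq 0)\le p_i\,\mathbb{E}[\#\mathrm{SFO}_i]/D_i$. The conditioning step is not literally covered by Lemma~\ref{lemma_2.3}, because the sample count may depend on the oracle outcomes.

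You also handle the memory model of Definition~\ref{def_3.1} explicitly, which the paper leaves implicit. There, state the invariant via the success count: all memories lie in $\mathrm{span}(e_1,\dots,e_{S})$, with $S$ the number of successes so far. Do not use $\max_i\mathrm{prog}_0(\mathcal{M}_{i,t})$, because memories may discard vectors that delayed communication later restores. That maximum therefore need not change only through oracle calls.
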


Building on the above function, we extend the construction to the distributed setting. Existing lower bounds for distributed optimization \cite{lu2021optimal,lu2023decentralized,yuan2022revisiting} commonly set $f_1 = f_2 = \dots = f_m = F_D$, which is insufficient to capture the difficulties introduced by heterogeneous variance. To overcome this limitation, we construct orthogonal local functions satisfying $\langle \nabla f_i(x), \nabla f_j(x) \rangle = 0$ for all $i \neq j$. Furthermore, to address the asymmetry caused by heterogeneous variance, we explicitly account for the sampling differences across nodes.

\begin{proof}[Proof of Theorem~\ref{thm_3.1}]
Let $\#\text{SFO}$ denote the total number of stochastic first-order oracle calls of the algorithm, and let $\#\text{SFO}_i$ denote the number of oracle calls performed at node $i$. For any algorithm $\mathcal{A}$, we have
\begin{align*}
\mathbb{E} \left[ \#\text{SFO}_i \right] = c_i \cdot \#\text{SFO}, \quad \text{where} \quad c_i > 0 \quad \text{and} \quad \sum_{i=1}^m c_i = 1.
\end{align*}
Letting $D = \sum_i D_i$, we define a matrix sequence $\{U_i\}_{i=1}^m$ such that $U_i \in \mathbb{R}^{D_i\times D}$, $U_i U_i^\top = I$, and $U_i U_j^\top = \mathbf{0}$ for any $1\le i \ne j \le m$.
The functions are constructed as
\begin{align*}
f_i(x) &= \frac{m L \lambda_i^2}{\ell} \, F_{D_i} \left( \frac{U_i x}{\lambda_i} \right), \quad
g_i(x, \xi; p_i) = \frac{m L \lambda_i}{\ell} \, U_i^\top G_{D_i} \left( \frac{U_i x}{\lambda_i}, \xi; p_i \right), \\
\text{where} & \quad \ell = \ell_1, \quad \lambda_i = \frac{\ell}{\sqrt{m}L} \sqrt{\frac{\sigma_i}{\bar{\sigma}_{\mathrm{AM}}}} \cdot 2 \epsilon, \quad
\frac{1}{p_i} = \frac{\sigma_i \bar{\sigma}_{\mathrm{AM}}}{4 m a^2 \epsilon^2} + 1.
\end{align*}

First, we verify that the functions constructed above satisfy Assumption~\ref{ass_2.3} on smoothness. For the global function $f$, we have
\begin{align*}
\left\| \nabla f(x) - \nabla f(y) \right\|^2 
&= \left\| \sum_{i=1}^m \frac{L \lambda_i}{\ell} \, U_i^\top \left( \nabla F_{D_i} \left( \frac{U_i x}{\lambda_i} \right) - \nabla F_{D_i} \left( \frac{U_i y}{\lambda_i} \right) \right) \right\|^2 \\
&= \sum_{i=1}^m \frac{L^2 \lambda_i^2}{\ell^2} \left\| \nabla F_{D_i} \left( \frac{U_i x}{\lambda_i} \right) - \nabla F_{D_i} \left( \frac{U_i y}{\lambda_i} \right) \right\|^2 \\
&\le L^2 \sum_{i=1}^m \left\| U_i (x - y) \right\|^2 \\
&\le L^2 \left\| x - y \right\|^2.
\end{align*}
For the local function $f_i$, it holds that
\begin{align*}
\left\| \nabla f_i(x) - \nabla f_i(y) \right\|^2 
&= \left\| \frac{m L \lambda_i}{\ell} \, U_i^\top \left( \nabla F_{D_i} \left( \frac{U_i x}{\lambda_i} \right) - \nabla F_{D_i} \left( \frac{U_i y}{\lambda_i} \right) \right) \right\|^2 \\
&= \frac{m^2 L^2 \lambda_i^2}{\ell^2} \left\| \nabla F_{D_i} \left( \frac{U_i x}{\lambda_i} \right) - \nabla F_{D_i} \left( \frac{U_i y}{\lambda_i} \right) \right\|^2 \\
&\le m^2 L^2 \left\| U_i (x - y) \right\|^2 \\
&\le m^2 L^2 \left\| x - y \right\|^2.
\end{align*}

Next, we verify that the stochastic gradients satisfy Assumption~\ref{ass_2.2}. By Property 1 of Lemma~\ref{lemma_2.2}, the unbiased property of $g_i(x,\xi; p_i)$ follows directly. Moreover, the variance has the following upper bound:
\begin{align*}
\mathbb{E} \left[ \left\| g_i(x, \xi) - \nabla f_i(x) \right\|^2 \right]
&= \mathbb{E} \left[ \left\| \frac{m L \lambda_i}{\ell} \, U_i^\top 
\left( G_{D_i} \left( \frac{U_i x}{\lambda_i}, \xi; p_i \right)
- \nabla F_{D_i} \left( \frac{U_i x}{\lambda_i} \right) \right) \right\|^2 \right] \\
&= \frac{m^2 L^2 \lambda_i^2}{\ell^2} 
\mathbb{E} \left[ \left\| G_{D_i} \left( \frac{U_i x}{\lambda_i}, \xi; p_i \right) 
- \nabla F_{D_i} \left( \frac{U_i x}{\lambda_i} \right) \right\|^2 \right] \\
&\le 4 m \epsilon^2 \cdot \frac{\sigma_i}{\bar{\sigma}_{\mathrm{AM}}} \cdot 
\frac{a^2 (1 - p_i)}{p_i} \\
&= \sigma_i^2,
\end{align*}
where the inequality follows from Property 2 of Lemma~\ref{lemma_2.2}.

Letting $D_i = \left\lfloor \dfrac{\Delta L}{4 \Delta_0 \ell \epsilon^2} \cdot \dfrac{\bar{\sigma}_{\mathrm{AM}}}{\sigma_i} \cdot m c_i \right\rfloor$, we have
\begin{align*}
f(0) - \inf_x f(x) 
&= \frac{1}{m} \sum_{i=1}^m f_i(0) - \inf_x \frac{1}{m} \sum_{i=1}^m f_i(x) \\
&\le \frac{1}{m} \sum_{i=1}^m f_i(0) - \frac{1}{m} \sum_{i=1}^m \inf_x f_i(x) \\
&= \sum_{i=1}^m \frac{4 \epsilon^2 \ell}{m L} \cdot \frac{\sigma_i}{\bar{\sigma}_{\mathrm{AM}}} 
\left( F_{D_i}(0) - \inf_x F_{D_i}(x) \right) \\
&\le \sum_{i=1}^m \frac{4 \epsilon^2 \ell}{m L} \cdot \frac{\sigma_i}{\bar{\sigma}_{\mathrm{AM}}} \cdot \Delta_0 D_i \\
&\le \sum_{i=1}^m \frac{4 \epsilon^2 \ell}{m L} \cdot \frac{\sigma_i}{\bar{\sigma}_{\mathrm{AM}}} \cdot 
\frac{\Delta L}{4 \Delta_0 \ell \epsilon^2} \cdot \frac{\bar{\sigma}_{\mathrm{AM}}}{\sigma_i} \cdot m c_i \\
&= \Delta \sum_{i=1}^m c_i \\
&= \Delta.
\end{align*}
Therefore, under this choice of $D_i$, Assumption~\ref{ass_2.1} is satisfied. 
This completes the verification that the constructed functions satisfy Assumptions 1-3.

If $\#\text{SFO} \le \dfrac{1}{128 \Delta_0 \ell a^2}\cdot \dfrac{\Delta L \bar{\sigma}_{\mathrm{AM}}^2}{\epsilon^4}$, and $\epsilon \le \min_i \sqrt{\dfrac{\Delta L \bar{\sigma}_{\mathrm{AM}}}{8 \Delta_0 \ell \sigma_i} m c_i}$, then we have
\begin{align*}
\mathbb{E} \left[ \left\| \nabla f(x) \right\|^2 \right]
&= \mathbb{E} \left[ \left\| \frac{1}{m} \sum_{i=1}^m \nabla f_i(x) \right\|^2 \right] \\
&= \mathbb{E} \left[ \left\| \sum_{i=1}^m \frac{L \lambda_i}{\ell} 
U_i^\top \nabla F_{D_i} \left( \frac{U_i x}{\lambda_i} \right) \right\|^2 \right] \\
&= \frac{4 \epsilon^2}{m} \sum_{i=1}^m \frac{\sigma_i}{\bar{\sigma}_{\mathrm{AM}}} 
\mathbb{E} \left[ \left\| \nabla F_{D_i} \left( \frac{U_i x}{\lambda_i} \right) \right\|^2 \right] \\
&> \frac{4 \epsilon^2}{m} \sum_{i=1}^m \frac{\sigma_i}{\bar{\sigma}_{\mathrm{AM}}} 
\mathbb{P} \left( \left[ U_i x \right]_{D_i} = 0 \right) \\
&= \frac{4 \epsilon^2}{m} \sum_{i=1}^m \frac{\sigma_i}{\bar{\sigma}_{\mathrm{AM}}} 
\mathbb{P} \left( \left[ U_i x \right]_{D_i} > 0 \,\middle|\, 
\#\text{SFO}_i < \frac{D_i - 1}{2 p_i} \right) 
\mathbb{P} \left( \#\text{SFO}_i < \frac{D_i - 1}{2 p_i} \right) \\
&\ge \frac{4 \epsilon^2}{m} \sum_{i=1}^m \frac{\sigma_i}{\bar{\sigma}_{\mathrm{AM}}} \cdot \frac{1}{2} \cdot \frac{1}{2} \\
&= \epsilon^2.
\end{align*}
The final inequality above follows from Lemma~\ref{lemma_2.3} and the following probability lower bound:
\begin{align*}
\mathbb{P} \left( \#\text{SFO}_i < \frac{D_i - 1}{2 p_i} \right) 
&= 1 - \mathbb{P} \left( \#\text{SFO}_i \ge \frac{D_i - 1}{2 p_i} \right) \\
&\ge 1 - \frac{\mathbb{E} \left[ \#\text{SFO}_i \right]}{\dfrac{D_i - 1}{2 p_i}} \\
&\ge 1 - \frac{\dfrac{1}{128 \Delta_0 \ell a^2} \cdot \dfrac{\Delta L \bar{\sigma}_{\mathrm{AM}}^2}{\epsilon^4} \cdot c_i}
{\dfrac{\Delta L}{8 \Delta_0 \ell \epsilon^2} \cdot \dfrac{\bar{\sigma}_{\mathrm{AM}}}{\sigma_i} \cdot m c_i \cdot \dfrac{\sigma_i \bar{\sigma}_{\mathrm{AM}}}{8 m a^2 \epsilon^2}} \\
&= \frac{1}{2},
\end{align*}
where the first inequality follows from Markov’s inequality.
Therefore, to ensure $\mathbb{E}\left[\|\nabla f(x)\|^{2}\right] \le \epsilon^{2}$, the number of samples must be at least $\Omega(\Delta L \bar{\sigma}_{\mathrm{AM}}^2 \epsilon^{-4})$.

For the second term $\Omega(m \Delta L \epsilon^{-2})$ in Theorem~\ref{thm_3.1}, which is independent of the stochastic gradients, we construct the following function:
\begin{align*}
f_i(x) &= \frac{m L \lambda_i^2}{\ell} \, F_{D_i} \left( \frac{U_i x}{\lambda_i} \right), \quad
g_i(x, \xi; p_i) = \nabla f_i(x),
\end{align*}
where $\ell = \ell_1$, $\lambda_i = 2\epsilon \ell/(\sqrt{m}L)$. 
It is straightforward to verify that it satisfies Assumptions 2 and 3.

Letting $D_i = D = \left\lfloor \dfrac{\Delta L}{4 \Delta_0 \ell \epsilon^2} \right\rfloor$, it holds that $f(0) - \inf_x f(x) \le \Delta$. If $\#\text{SFO} \le mD/16$, at least half of the nodes satisfy $[U_i x]_{D} = 0$, then we have
\begin{align*}
\mathbb{E} \left[ \left\| \nabla f(x) \right\|^2 \right]
&= \mathbb{E} \left[ \left\| \frac{1}{m} \sum_{i=1}^m \nabla f_i(x) \right\|^2 \right] \\
&= \mathbb{E} \left[ \left\| \sum_{i=1}^m \frac{L \lambda_i}{\ell} 
U_i^\top \nabla F_{D_i} \left( \frac{U_i x}{\lambda_i} \right) \right\|^2 \right] \\
&= \frac{4 \epsilon^2}{m} \sum_{i=1}^m 
\mathbb{E} \left[ \left\| \nabla F_{D_i} \left( \frac{U_i x}{\lambda_i} \right) \right\|^2 \right] \\
&> \frac{4 \epsilon^2}{m} \cdot \frac{m}{2} \\
&> \epsilon^2.
\end{align*}
Therefore, the sample complexity is lower bounded by $\Omega(m \Delta L \epsilon^{-2})$.

Combining the two terms completes the proof of Theorem~\ref{thm_3.1}.
\end{proof}

\section{The Proof of Theorem~\ref{thm_4.1}}
For the proof of Theorem~\ref{thm_4.1}, we first introduce the Lyapunov function
\begin{equation*}
\Phi(t)=f(\bar{x}^t)+\frac{\eta}{p} U(t)+\frac{\eta}{m p} V(t)+\frac{1}{\eta} C(t),
\end{equation*}
where the terms are defined as 
\begin{itemize}
    \item Global gradient estimation error: $U(t) = \left\| \frac{1}{m} \sum_{i=1}^m \left( y_i^t - \nabla f_i(x_i^t) \right) \right\|^2$; 
    \item average local gradient estimation error: $V(t) = \frac{1}{m} \left\| Y^t - \nabla \mathbf{f}(X^t) \right\|^2$; 
    \item consensus error: $C(t) = \left\| X^t - \mathbf{1} \bar{x}^t \right\|^2 + \eta^2 \left\| S^t - \mathbf{1} \bar{s}^t \right\|^2$. 
\end{itemize}
The overall proof strategy involves leveraging the smoothness of the function to derive the descent lemma, then bounding $U(t)$, $V(t)$, and $C(t)$ respectively, ultimately yielding an upper bound on the gradient
\begin{equation*}
\frac{\eta}{2} \mathbb{E} \left[\|\nabla f(\bar{x}^t)\|^2 \right] \le \mathbb{E}\left[\Phi(t)-\Phi(t+1)-\frac{1}{2 \eta} C(t)+\sum_{i=1}^m \frac{3 \eta \sigma_i^2}{m^2 B_i}\right].
\end{equation*}
Averaging the above inequality over $t = 0, \dots, T-1$ yields
\begin{equation*}
\frac{\eta}{2 T} \sum_{t=0}^{T-1} \mathbb{E}\left\|\nabla f\left(\bar{x}^t\right)\right\|^2 
\leq \mathbb{E}\left[\frac{\Phi(0)-\Phi(T)}{T}-\frac{1}{2 \eta T} \sum_{t=0}^{T-1} C(t)\right] 
+ \frac{3 \eta \sigma_i^2}{m^2 B_i}.
\end{equation*}
Under the parameter choices specified in Theorem~\ref{thm_4.1}, the resulting upper bound on $\sum_{t=0}^{T-1} \mathbb{E}\left\|\nabla f(\bar{x}^t)\right\|^2$ ensures that the output satisfies
$\mathbb{E}\left[\left\|\nabla f\left(x_{i,\mathrm{out}}\right)\right\|^2\right] \leq \epsilon^2$. 

We first present several key lemmas.

\begin{lemma}[Descent Lemma]
\label{lemma_descent}
For Algorithm~\ref{algo:d-nss-vr}, the following descent relation holds:
\begin{equation*}
f(\bar{x}^{t+1})
\leq f(\bar{x}^t)
- \frac{\eta}{2} \| \nabla f(\bar{x}^t)\|^2
- \left(\frac{1}{2\eta} - \frac{\bar{L}}{2} \right) \| \bar{x}^{t+1} - \bar{x}^t \|^2
+ \eta U(t)
+ \bar{L}^2 \eta C(t).
\end{equation*}
\end{lemma}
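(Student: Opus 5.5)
We prove the descent relation from $\bar{L}$-smoothness of $f$ (Appendix~\ref{app_a}) together with the averaging identities for FastMix. By Lemma~\ref{lemma_1.1}, FastMix preserves averages, so $\bar{x}^{t+1} = \bar{x}^t - \eta \bar{s}^t$. By \eqref{eq_tracking}, $\bar{s}^t = \bar{y}^t$. Hence $\bar{x}^{t+1} - \bar{x}^t = -\eta \bar{y}^t$ for every $t$, including $t=0$ through lines 3--4 of Algorithm~\ref{algo:d-nss-vr}.

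The first step is the quadratic upper bound
\[
f(\bar{x}^{t+1}) \le f(\bar{x}^t) + \langle \nabla f(\bar{x}^t), \bar{x}^{t+1}-\bar{x}^t\rangle + \tfrac{\bar{L}}{2}\|\bar{x}^{t+1}-\bar{x}^t\|^2 .
\]
Substituting $\bar{x}^{t+1}-\bar{x}^t = -\eta\bar{y}^t$ and using the polarization identity $-\langle a,b\rangle = \tfrac12(\|a-b\|^2-\|a\|^2-\|b\|^2)$, as in \eqref{eq_2.4}, gives
\[
f(\bar{x}^{t+1}) \le f(\bar{x}^t) - \tfrac{\eta}{2}\|\nabla f(\bar{x}^t)\|^2 - \Bigl(\tfrac{1}{2\eta}-\tfrac{\bar{L}}{2}\Bigr)\|\bar{x}^{t+1}-\bar{x}^t\|^2 + \tfrac{\eta}{2}\|\bar{y}^t - \nabla f(\bar{x}^t)\|^2 .
\]
Here we used $\eta\|\bar{y}^t\|^2/2 = \|\bar{x}^{t+1}-\bar{x}^t\|^2/(2\eta)$.

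The second step splits the estimation error deterministically, with no expectation needed:
\[
\bar{y}^t - \nabla f(\bar{x}^t) = \frac{1}{m}\sum_i \bigl(y_i^t - \nabla f_i(x_i^t)\bigr) + \frac{1}{m}\sum_i\bigl(\nabla f_i(x_i^t) - \nabla f_i(\bar{x}^t)\bigr).
\]
Young's inequality gives $\|\bar{y}^t-\nabla f(\bar{x}^t)\|^2 \le 2U(t) + 2\|\frac1m\sum_i(\nabla f_i(x_i^t)-\nabla f_i(\bar{x}^t))\|^2$, so the factor $\eta/2$ becomes $\eta U(t)$ plus $\eta$ times the second term.

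The main obstacle is bounding this second term by $\bar{L}^2\|X^t-\mathbf{1}\bar{x}^t\|^2$ using only Assumption~\ref{ass_2.5}, which does not give smoothness of individual $f_i$ at the rate $\bar{L}$. The plan is to write $\nabla f_i = \mathbb{E}_{\xi_i} g_i$ and apply Jensen's inequality twice, once over the average in $i$ and once over $\xi_i$:
\[
\Bigl\|\frac1m\sum_i \mathbb{E}_{\xi_i}\bigl[g_i(x_i^t;\xi_i)-g_i(\bar{x}^t;\xi_i)\bigr]\Bigr\|^2 \le \frac1m\sum_i \mathbb{E}_{\xi_i}\|g_i(x_i^t;\xi_i)-g_i(\bar{x}^t;\xi_i)\|^2 .
\]
The right-hand side has different points $x_i^t$ for different $i$, but Assumption~\ref{ass_2.5} is stated at a common pair $(x,y)$. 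To get around this, use the per-node bound from the first lemma of Appendix B, $\mathbb{E}\|g_i(x;\xi_i)-g_i(y;\xi_i)\|^2\le m\bar{L}^2\|x-y\|^2$. This gives
\[
\frac1m\sum_i m\bar{L}^2\|x_i^t-\bar{x}^t\|^2 = \bar{L}^2\|X^t-\mathbf{1}\bar{x}^t\|^2 \le \bar{L}^2 C(t).
\]
Multiplying by $\eta$ yields exactly the term $\bar{L}^2\eta C(t)$, and combining the two steps gives the statement of Lemma~\ref{lemma_descent}.
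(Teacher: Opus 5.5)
Your proposal is correct and follows essentially the same route as the paper. Both arguments use the $\bar{L}$-smoothness quadratic bound with $\bar{x}^{t+1}-\bar{x}^t=-\eta\bar{s}^t=-\eta\bar{y}^t$, the polarization identity, and a Young split into $2U(t)$ plus the node-drift term; the drift is then bounded by $\bar{L}^2\|X^t-\mathbf{1}\bar{x}^t\|^2\le \bar{L}^2 C(t)$ via the per-node $\sqrt{m}\bar{L}$-smoothness of each $f_i$, which you derive explicitly from Jensen over $\xi_i$ and the first lemma of Appendix B, whereas the paper simply invokes it.
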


\begin{proof}
By the $\bar{L}$‑smoothness of $f$, we have
\begin{equation}
\label{eq_b1_1}
f(\bar{x}^{t+1}) \leq f(\bar{x}^t) + \left\langle \nabla f(\bar{x}^t), \bar{x}^{t+1} - \bar{x}^t \right\rangle + \frac{\bar{L}}{2} \|\bar{x}^{t+1} - \bar{x}^t\|^2.
\end{equation}
Noting that $\bar{x}^{t+1} = \bar{x}^t - \eta \bar{s}^t$, substituting this into \eqref{eq_b1_1} yields
\begin{equation}
\begin{aligned}
f(\bar{x}^{t+1}) 
\leq\; & f(\bar{x}^t) - \eta \left\langle \nabla f(\bar{x}^t), \bar{s}^t \right\rangle + \frac{\bar{L} \eta^2}{2} \|\bar{s}^t\|^2 \\
=\; & f(\bar{x}^t) - \frac{\eta}{2} \| \nabla f(\bar{x}^t) \|^2 - \left( \frac{\eta}{2} - \frac{\bar{L} \eta^2}{2} \right) \|\bar{s}^t\|^2 + \frac{\eta}{2} \| \bar{s}^t - \nabla f(\bar{x}^t) \|^2 \\
=\; & f(\bar{x}^t) - \frac{\eta}{2} \|\nabla f(\bar{x}^t)\|^2 - \left( \frac{1}{2\eta} - \frac{\bar{L}}{2} \right) \|\bar{x}^{t+1} - \bar{x}^t\|^2 + \frac{\eta}{2} \| \bar{s}^t - \nabla f(\bar{x}^t) \|^2.
\end{aligned}
\end{equation}
We next bound the last term, which gives
\begin{equation}
\label{eq_b1_2}
\begin{aligned}
\left\| \bar{s}^t - \nabla f(\bar{x}^t) \right\|^2
= \; & \left\| \frac{1}{m} \sum_{i=1}^m y_i^t - \nabla f_i(\bar{x}^t) \right\|^2 \\
\le \; & 2 \left\| \frac{1}{m} \sum_{i=1}^m \left( y_i^t - \nabla f_i(x_i^t) \right) \right\|^2 
+ 2 \left\| \frac{1}{m} \sum_{i=1}^m \left( \nabla f_i(x_i^t) - \nabla f_i(\bar{x}^t) \right) \right\|^2 \\
\le \; & 2 \left\| \frac{1}{m} \sum_{i=1}^m \left( y_i^t - \nabla f_i(x_i^t) \right) \right\|^2 
+ \frac{2}{m} \sum_{i=1}^m \left\| \nabla f_i(x_i^t) - \nabla f_i(\bar{x}^t) \right\|^2 \\
\le \; & 2 \left\| \frac{1}{m} \sum_{i=1}^m \left( y_i^t - \nabla f_i(x_i^t) \right) \right\|^2 
+ 2 \bar{L}^2 \left\| X^t - \mathbf{1} \bar{x}^t \right\|^2 \\
\le \; & 2 U(t) + 2 \bar{L}^2 C(t),
\end{aligned}
\end{equation}
where the third inequality follows from the $\sqrt{m}\bar{L}$‑smoothness of each $f_i$, and the last inequality follows directly from the definitions of $U(t)$ and $C(t)$.

Combining \eqref{eq_b1_1} and \eqref{eq_b1_2} completes the proof of Lemma~\ref{lemma_descent}.
\end{proof}

\begin{lemma}
\label{lemma_C}
Under the parameter choices specified in Theorem~\ref{thm_4.1}, for Algorithm~\ref{algo:d-nss-vr}, we have
\begin{equation*}
\mathbb{E}[C(t+1)]
\leq 38 c m \rho_t^2 \, \mathbb{E}[C(t)]
+ 6 \rho_t^2 \eta^2 m \, \mathbb{E}[V(t)]
+ 24 c \rho_t^2 m^2 \, \mathbb{E}\|\bar{x}^{t+1} - \bar{x}^t\|^2
+ \sum_{i=1}^m \frac{6 p \rho_t^2 \eta^2 \sigma_i^2}{B_i},
\end{equation*}
where $c = \max\{1/(bq),\, 1\}$.
\end{lemma}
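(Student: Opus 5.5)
The argument follows the same template as the consensus bounds \eqref{eq_2.6}--\eqref{eq_2.10} in the proof of Theorem~\ref{thm_2.1}. The new ingredient is a separate bound for $\mathbb{E}\|Y^{t+1}-Y^t\|^2$ that accounts for the three branches of Algorithm~\ref{algo:d-nss-vr}.

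\textbf{Contraction of the two consensus terms.} By Lemma~\ref{lemma_1.1} and Young's inequality, exactly as in \eqref{eq_2.6},
\[
\|X^{t+1}-\mathbf{1}\bar{x}^{t+1}\|^2\le 2\rho_t^2\|X^t-\mathbf{1}\bar{x}^t\|^2+2\rho_t^2\eta^2\|S^t-\mathbf{1}\bar{s}^t\|^2 ,
\]
and, as in \eqref{eq_2.7},
\[
\|S^{t+1}-\mathbf{1}\bar{s}^{t+1}\|^2\le 2\rho_t^2\|S^t-\mathbf{1}\bar{s}^t\|^2+2\rho_t^2\|Y^{t+1}-Y^t\|^2 .
\]
Multiplying the second inequality by $\eta^2$ and adding, I get $\mathbb{E}[C(t+1)]\le 2\rho_t^2\,\mathbb{E}[C(t)]+2\rho_t^2\eta^2\,\mathbb{E}\|Y^{t+1}-Y^t\|^2$. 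Everything then reduces to bounding $\mathbb{E}\|Y^{t+1}-Y^t\|^2$.

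\textbf{Bounding $\mathbb{E}\|Y^{t+1}-Y^t\|^2$ by conditioning on $\zeta_{t+1}$.}
\begin{itemize}
\item \emph{Case $\zeta_{t+1}=1$ (probability $p$).} I write $y_i^{t+1}-y_i^t=(y_i^{t+1}-\nabla f_i(x_i^{t+1}))+(\nabla f_i(x_i^{t+1})-\nabla f_i(x_i^t))+(\nabla f_i(x_i^t)-y_i^t)$ and split with a factor $3$.
 \begin{itemize}
 \item The first term is bounded by $\sigma_i^2/B_i$ using Assumption~\ref{ass_2.2}.
 \item The third term sums to $m V(t)$.
 \item The middle term uses smoothness of $f_i$ (constant $\sqrt{m}\bar{L}$, via the lemma following Assumption~\ref{ass_2.5}) and gives $m\bar{L}^2\|X^{t+1}-X^t\|^2$.
 \end{itemize}
 Weighted by $p$, the noise contribution is $3p\sum_i\sigma_i^2/B_i$, which produces the last term in the statement.
\item \emph{Case $\zeta_{t+1}=0$.} Either $y_i^{t+1}=y_i^t$, or, with probability $q$, $y_i^{t+1}-y_i^t=\frac{1}{bq}\sum_j\big(g_i(x_i^{t+1};\xi_{ij})-g_i(x_i^t;\xi_{ij})\big)$. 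Its second moment is at most $\frac{1}{bq}\mathbb{E}\|g_i(x_i^{t+1};\xi)-g_i(x_i^t;\xi)\|^2+\|\nabla f_i(x_i^{t+1})-\nabla f_i(x_i^t)\|^2$, from variance plus mean squared over the $b$ i.i.d.\ samples. Summing over $i$ with Assumption~\ref{ass_2.5} gives at most $(1/(bq)+1)\,m\bar{L}^2\|X^{t+1}-X^t\|^2\le 2c\,m\bar{L}^2\|X^{t+1}-X^t\|^2$. This is where $c$ enters.
\end{itemize}
In both cases,
\[
\mathbb{E}\|Y^{t+1}-Y^t\|^2\le 3pmV(t)+3p\sum_i\sigma_i^2/B_i+O(c\,m\bar{L}^2)\,\mathbb{E}\|X^{t+1}-X^t\|^2 .
\]
Using $p\le1$ gives the $V$ coefficient $6\rho_t^2\eta^2 m$ after the factor $2\rho_t^2\eta^2$.

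\textbf{Closing the recursion.} I decompose
\[
\|X^{t+1}-X^t\|^2\le 3\|X^{t+1}-\mathbf{1}\bar{x}^{t+1}\|^2+3\|X^t-\mathbf{1}\bar{x}^t\|^2+3m\|\bar{x}^{t+1}-\bar{x}^t\|^2 .
\]
The first piece is again bounded by $2\rho_t^2 C(t)\le 2C(t)$, since $\rho_t\le1$ by the choice of $R_t$. The resulting term $\eta^2\bar{L}^2$ times $C(t)$ and times $\|\bar{x}^{t+1}-\bar{x}^t\|^2$ is absorbed using $\eta=1/(48\bar{L})$, so $\eta^2\bar{L}^2\le 1$.

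Collecting constants should give coefficients $O(cm\rho_t^2)$ on $C(t)$ and $O(c\rho_t^2m^2)$ on $\|\bar{x}^{t+1}-\bar{x}^t\|^2$. The extra factor $m$ comes from $\|\mathbf{1}(\bar{x}^{t+1}-\bar{x}^t)\|^2=m\|\cdot\|^2$ together with the $m\bar{L}^2$ local smoothness. I expect this bookkeeping, landing on the specific constants $38$ and $24$, to be the main nuisance rather than a conceptual obstacle. The only subtle point is the conditional-variance step in the $\zeta_{t+1}=0$ branch, which needs independence of $\omega_i^{t+1}$ and the minibatch from the past.
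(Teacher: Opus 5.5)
Your proposal is correct and follows the paper's proof essentially step for step. The shared ingredients are: FastMix contraction of the $X$- and $S$-consensus errors; a branch-wise bound on $\mathbb{E}\|Y^{t+1}-Y^t\|^2$, using a three-term Young split when $\zeta_{t+1}=1$ and the per-node constant $m\bar{L}^2$ from the lemma after Assumption~\ref{ass_2.5}; and the three-way split of $\|X^{t+1}-X^t\|^2$, closed with $\rho_t\le 1$ and the small step size.

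Two small remarks. First, in the $\zeta_{t+1}=0$ branch the exact second moment is $\frac{1}{bq}\mathbb{E}\|g_i(x_i^{t+1};\xi)-g_i(x_i^t;\xi)\|^2+\frac{b-1}{bq}\|\nabla f_i(x_i^{t+1})-\nabla f_i(x_i^t)\|^2$. The paper drops the second term, and its slack constant $4c$ absorbs it. Your coefficient $1$ on that mean term therefore relies on $b-1\le bq$, which holds because $b=\lceil bq\rceil$ under the theorem's parameters. Second, to land on $38cm\rho_t^2$ you should use $\eta\bar{L}\le 1/2$, as the paper does (or the actual value $1/48$). With only $\eta^2\bar{L}^2\le 1$, your splitting yields a $C(t)$ coefficient of about $(2+54cm)\rho_t^2$, which exceeds $38cm\rho_t^2$.
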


\begin{proof}
For the consensus error, we have
\begin{equation*}
\begin{aligned}
\left\| X^{t+1} - \mathbf{1} \bar{x}^{t+1} \right\| 
&= \left\| \mathtt{FastMix}(X^t - \eta S^t, W, R_t) - \frac{\mathbf{1} \mathbf{1}^\top}{m} \mathtt{FastMix}(X^t - \eta S^t, W, R_t) \right\| \\
&\leq \rho_t \left\| X^t - \eta S^t - \mathbf{1}(\bar{x}^t - \eta \bar{s}^t) \right\| \\
&\leq \rho_t \left\| X^t - \mathbf{1} \bar{x}^t \right\| + \rho_t \eta \left\| S^t - \mathbf{1} \bar{s}^t \right\|,
\end{aligned}
\end{equation*}
where the first inequality follows from Lemma \ref{lemma_1.1}, and the second inequality follows from the triangle inequality. By Young's inequality, we have
\begin{equation}
\label{eq_b1_5}
\begin{aligned}
\|X^{t+1} - \mathbf{1} \bar{x}^{t+1}\|^2 
&\leq 2 \rho_t^2 \|X^t - \mathbf{1} \bar{x}^t\|^2 + 2 \rho_t^2 \eta^2 \|S^t - \mathbf{1} \bar{s}^t\|^2 \\
&= 2 \rho_t^2 C(t).
\end{aligned}
\end{equation}

Similarly, from the update rule of the algorithm, we have
\begin{equation}
\label{eq_b1_6}
\begin{aligned}
\|S^{t+1} - \mathbf{1} \bar{s}^{t+1}\|^2 
&= \left\| \mathtt{FastMix}(S^t + Y^{t+1} - Y^t, W, R_t) - \frac{\mathbf{1} \mathbf{1}^\top}{m} \mathtt{FastMix}(S^t + Y^{t+1} - Y^t, W, R_t) \right\|^2 \\
&\leq \rho_t^2 \left\| S^t + Y^{t+1} - Y^t - \mathbf{1}(\bar{s}^t + \bar{y}^{t+1} - \bar{y}^t) \right\|^2 \\
&\leq 2 \rho_t^2 \|S^t - \mathbf{1} \bar{s}^t\|^2 + 2 \rho_t^2 \|Y^{t+1} - Y^t - \mathbf{1}(\bar{y}^{t+1} - \bar{y}^t)\|^2 \\
&\leq 2 \rho_t^2 \|S^t - \mathbf{1} \bar{s}^t\|^2 + 2 \rho_t^2 \|Y^{t+1} - Y^t\|^2,
\end{aligned}
\end{equation}
where the second inequality follows from the Young's inequality, and the last inequality uses the fact that
$\|A - \frac{1}{m} \mathbf{1} \mathbf{1}^\top A\| \leq \|A\|$ for any $\mathbf{z} \in \mathbb{R}^{m \times d}$.

We next bound the expectation of the term $\rho_t^2 \|Y^{t+1} - Y^t\|^2$. Noting that
\begin{equation*}
\mathbb{E} \left[ \rho_t^2 \|Y^{t+1} - Y^t\|^2 \right] = \sum_{i=1}^m \mathbb{E} \left[ \rho_t^2 \| y_i^{t+1} - y_i^t \|^2 \right],
\end{equation*}
and for each $i$, it holds that
\begin{equation*}
\begin{aligned}
\mathbb{E} \left[ \rho_t^2 \| y_i^{t+1} - y_i^t \|^2 \right] 
\leq\; & p \rho_t^2 \, \mathbb{E} \left\| \frac{1}{B_i} \sum_{\xi_{i,j} \in \mathcal{S}_i(t)} g_i(x_i^{t+1}; \xi_{i,j}) - y_i^t \right\|^2 \\
& + (1 - p) \frac{\rho_t^2}{b q} \, \mathbb{E} \left\| g_i(x_i^{t+1}; \xi_{i,j}) - g_i(x_i^t; \xi_{i,j}) \right\|^2.
\end{aligned}
\end{equation*}
By Young's inequality, we have
\begin{equation*}
\begin{aligned}
& \mathbb{E} \left[ \rho_t^2 \| y_i^{t+1} - y_i^t \|^2 \right] \\
\leq\; & 3 p \rho_t^2 \, \mathbb{E} \left\| \frac{1}{B_i} \sum_{\xi_{i,j} \in \mathcal{S}_i(t)} g_i(x_i^{t+1}; \xi_{i,j}) - \nabla f_i(x_i^{t+1}) \right\|^2 \\
& + 3 p \rho_t^2 \, \mathbb{E} \left\| \nabla f_i(x_i^{t+1}) - \nabla f_i(x_i^t) \right\|^2 \\
& + 3 p \rho_t^2 \, \mathbb{E} \left\| \nabla f_i(x_i^t) - y_i^t \right\|^2 
+ \frac{(1 - p) \rho_t^2}{b q} \mathbb{E} \left\| g_i(x_i^{t+1}; \xi_{i,j}) - g_i(x_i^t; \xi_{i,j}) \right\|^2.
\end{aligned}
\end{equation*}
Combining with Assumption~\ref{ass_2.2} and the mean‑square smoothness, we obtain
\begin{align*}
& \mathbb{E} \left[ \rho_t^2 \| Y^{t+1} - Y^t \|^2 \right] \notag\\
\leq\; & \sum_{i=1}^m \frac{3 p \rho_t^2 \sigma_i^2}{B_i} + 3 p \rho_t^2 \, \mathbb{E} \| \nabla \mathbf{f}(X^t) - Y^t \|^2 \notag\\
& + \left(3 p \rho_t^2 + \frac{(1 - p) \rho_t^2}{b q} \right) m\bar{L}^2 \, \mathbb{E} \| X^{t+1} - X^t \|^2 \notag\\
\leq\; & \sum_{i=1}^m \frac{3 p \rho_t^2 \sigma_i^2}{B_i} + 3 p \rho_t^2 \, \mathbb{E} \| \nabla \mathbf{f}(X^t) - Y^t \|^2 \notag\\
& + 3\left(3 p \rho_t^2 + \frac{(1 - p) \rho_t^2}{b q} \right) m\bar{L}^2 \left( 2 \mathbb{E} \| X^t - \mathbf{1} \bar{x}^t \|^2 + 2 \eta^2 \mathbb{E} \| S^t - \mathbf{1} \bar{s}^t \|^2 \right) \notag\\
& + 3\left(3 p \rho_t^2 + \frac{(1 - p) \rho_t^2}{b q} \right) m\bar{L}^2 \left[ m \, \mathbb{E} \| \bar{x}^{t+1} - \bar{x}^t \|^2 + \mathbb{E} \| X^t - \mathbf{1} \bar{x}^t \|^2 \right] \notag\\
=\; & \sum_{i=1}^m \frac{3 p \rho_t^2 \sigma_i^2}{B_i} + 3 p \rho_t^2 \, \mathbb{E} \| \nabla \mathbf{f}(X^t) - Y^t \|^2 \notag\\
& + 9 \left(3 p \rho_t^2 + \frac{(1 - p) \rho_t^2}{b q} \right) m\bar{L}^2 \mathbb{E} \| X^t - \mathbf{1} \bar{x}^t \|^2 \notag\\
& + 6 \left(3 p \rho_t^2 + \frac{(1 - p) \rho_t^2}{b q} \right) m\bar{L}^2 \eta^2 \mathbb{E} \| S^t - \mathbf{1} \bar{s}^t \|^2 \notag\\
& + 3 \left(3 p \rho_t^2 + \frac{(1 - p) \rho_t^2}{b q} \right) \bar{L}^2 m^2 \, \mathbb{E} \| \bar{x}^{t+1} - \bar{x}^t \|^2.
\end{align*}
Letting $c = \max\left\{ \frac{1}{b q}, 1 \right\}$, then we have
\begin{align}
\label{eq_b1_7}
\mathbb{E} \left[ \rho_t^2 \| Y^{t+1} - Y^t \|^2 \right]
\leq\; & \sum_{i=1}^m \frac{3 p \rho_t^2 \sigma_i^2}{B_i}
+ 3 p \rho_t^2 \, \mathbb{E} \left\| \nabla \mathbf{f}(X^t) - Y^t \right\|^2 \notag\\
& + 36 c \rho_t^2 m\bar{L}^2 \, \mathbb{E} \left\| X^t - \mathbf{1} \bar{x}^t \right\|^2 \notag\\
& + 24 c \rho_t^2 m\bar{L}^2 \eta^2 \, \mathbb{E} \left\| S^t - \mathbf{1} \bar{s}^t \right\|^2 \notag\\
& + 12 c \rho_t^2 \bar{L}^2 m^2 \, \mathbb{E} \left\| \bar{x}^{t+1} - \bar{x}^t \right\|^2.
\end{align}

Combining inequalities (\ref{eq_b1_5}), (\ref{eq_b1_6}), and (\ref{eq_b1_7}) yields
\begin{align*}
& \mathbb{E} \left[ \|X^{t+1} - \mathbf{1} \bar{x}^{t+1}\|^2 + \eta^2 \|S^{t+1} - \mathbf{1} \bar{s}^{t+1}\|^2 \right] \\
\leq\; & 2 \rho_t^2 \, \mathbb{E} \|X^t - \mathbf{1} \bar{x}^t\|^2 
+ 4 \rho_t^2 \eta^2 \, \mathbb{E} \|S^t - \mathbf{1} \bar{s}^t\|^2 
+ 2 \eta^2 \, \mathbb{E} \left[ \rho_t^2 \|Y^{t+1} - Y^t\|^2 \right] \\
\leq\; & 38 c m \rho_t^2 \, \mathbb{E} \left[ \|X^t - \mathbf{1} \bar{x}^t\|^2 + \eta^2 \|S^t - \mathbf{1} \bar{s}^t\|^2 \right] 
+ 6 \rho_t^2 \eta^2 \, \mathbb{E} \| \nabla \mathbf{f}(X^t) - Y^t \|^2 \\
& + 24 c \rho_t^2 m^2 \, \mathbb{E} \| \bar{x}^{t+1} - \bar{x}^t \|^2 
+ \sum_{i=1}^m \frac{6 p \rho_t^2 \eta^2 \sigma_i^2}{B_i},
\end{align*}
where the last inequality uses the stepsize condition $\eta \le 1/(2\bar{L})$. Therefore,
\begin{equation*}
\mathbb{E}[C(t+1)] \leq 38 c m \rho_t^2 \, \mathbb{E}[C(t)] 
+ 6 \rho_t^2 \eta^2 m \, \mathbb{E}[V(t)] 
+ 24 c \rho_t^2 m^2 \, \mathbb{E} \| \bar{x}^{t+1} - \bar{x}^t \|^2 
+ \sum_{i=1}^m \frac{6 p \rho_t^2 \eta^2 \sigma_i^2}{B_i},
\end{equation*}
which completes the proof of the lemma.
\end{proof}

\begin{lemma}
\label{lemma_V}
Under the parameter choices specified in Theorem~\ref{thm_4.1}, for Algorithm~\ref{algo:d-nss-vr}, we have
\begin{equation*}
\mathbb{E}[V(t+1)] 
\leq (1 - p) \, \mathbb{E}[V(t)] 
+ \frac{9(1 - p)\bar{L}^2}{b q} \, \mathbb{E}[C(t)] 
+ \frac{3(1 - p)\bar{L}^2}{b q} \, \mathbb{E}\|\bar{x}^{t+1} - \bar{x}^t\|^2 
+ \sum_{i=1}^m \frac{p \sigma_i^2}{mB_i}.
\end{equation*}
\end{lemma}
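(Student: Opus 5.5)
The bound follows by conditioning on the coin $\zeta_{t+1}$ and handling each node $i$ separately. Since $V(t+1)=\frac1m\sum_i\|y_i^{t+1}-\nabla f_i(x_i^{t+1})\|^2$, it suffices to bound $\mathbb{E}\|y_i^{t+1}-\nabla f_i(x_i^{t+1})\|^2$ and then average over $i$.

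\emph{Large-batch branch} ($\zeta_{t+1}=1$, probability $p$). Here $y_i^{t+1}$ is an average of $B_i$ i.i.d. unbiased samples at $x_i^{t+1}$. Assumption~\ref{ass_2.2} gives a conditional error of at most $\sigma_i^2/B_i$. Averaging over $i$ and weighting by $p$ produces the term $\sum_i p\sigma_i^2/(mB_i)$.

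\emph{Recursive branch} ($\zeta_{t+1}=0$, probability $1-p$). Write
\[
y_i^{t+1}-\nabla f_i(x_i^{t+1}) = \bigl(y_i^t-\nabla f_i(x_i^t)\bigr) + \Bigl(\tfrac{\omega_i}{bq}\textstyle\sum_{j}\delta_{ij} - (\nabla f_i(x_i^{t+1})-\nabla f_i(x_i^t))\Bigr),
\]
where $\delta_{ij}=g_i(x_i^{t+1};\xi_{ij})-g_i(x_i^t;\xi_{ij})$. Condition on all randomness up to $x^{t+1}$; this fixes $x_i^{t+1}$, $x_i^t$ and $y_i^t$, since $x^{t+1}$ is computed before the new samples. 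The second bracket then has conditional mean zero, because $\mathbb{E}[\omega_i]=q$ and the $\xi_{ij}$ are unbiased. The cross term therefore vanishes, and
\[
\mathbb{E}\|y_i^{t+1}-\nabla f_i(x_i^{t+1})\|^2 \le \|y_i^t-\nabla f_i(x_i^t)\|^2+\mathbb{E}\Bigl\|\tfrac{\omega_i}{bq}\textstyle\sum_j\delta_{ij}\Bigr\|^2 .
\]
Expanding the last term (the variance bounded by the second moment) gives $\frac{q}{b^2q^2}\bigl(b\,\mathbb{E}\|\delta_{i1}\|^2+b(b-1)\|\nabla f_i(x_i^{t+1})-\nabla f_i(x_i^t)\|^2\bigr)$. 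I would bound it crudely by $\frac{1}{bq}\mathbb{E}\|\delta_{i1}\|^2$ plus a $\frac{1}{q}$-scaled deterministic difference term. To reach the stated constant $1/(bq)$, the cleanest route is to bound the variance of $\frac{\omega_i}{bq}\sum_j\delta_{ij}$ by $\frac{1}{bq}\mathbb{E}\|\delta\|^2$, which holds when $\omega_i$ and the $b$ samples are treated as an effective minibatch of expected size $bq$. Nailing this constant is the main technicality.

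Averaging over $i$ and applying Assumption~\ref{ass_2.5} in its averaged form (not the per-node $m\bar L^2$ form) gives
\[
\frac1m\sum_i\mathbb{E}\|\delta_i\|^2\le \frac{\bar L^2}{m}\|X^{t+1}-X^t\|^2 .
\]
Next decompose
\[
X^{t+1}-X^t=(X^{t+1}-\mathbf 1\bar x^{t+1})+\mathbf 1(\bar x^{t+1}-\bar x^t)-(X^t-\mathbf 1\bar x^t)
\]
and use Young's inequality with factor 3:
\[
\|X^{t+1}-X^t\|^2\le 3\|X^{t+1}-\mathbf 1\bar x^{t+1}\|^2+3m\|\bar x^{t+1}-\bar x^t\|^2+3\|X^t-\mathbf 1\bar x^t\|^2 .
\]
By \eqref{eq_b1_5}, the first term is at most $6\rho_t^2C(t)\le 6C(t)$, since $\rho_t\le1$ under the chosen $R_t$. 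Together with the third term, $\|X^t-\mathbf 1\bar x^t\|^2\le C(t)$, this gives at most $9C(t)$. Dividing by $m$ gives coefficients $9\bar L^2/(bq)$ on $C(t)$, where $1/m\le1$ was used to drop the extra factor, and $3\bar L^2/(bq)$ on $\|\bar x^{t+1}-\bar x^t\|^2$. Multiplying this branch by $1-p$ and adding the large-batch branch yields the claimed recursion.
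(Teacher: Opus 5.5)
\paragraph{Gap: the averaged smoothness step.} The proposal breaks at the step
\[
\frac1m\sum_i\mathbb{E}\|\delta_i\|^2\le \frac{\bar L^2}{m}\|X^{t+1}-X^t\|^2 .
\]
Assumption~\ref{ass_2.5} bounds $\frac1m\sum_i\mathbb{E}\|g_i(x;\xi_i)-g_i(y;\xi_i)\|^2$ only for a \emph{common} pair $(x,y)$ shared by all nodes. In your bound, node $i$ is evaluated at its own pair $(x_i^{t+1},x_i^t)$, and the averaged form says nothing about that.

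The inequality is in fact false in general. Take $m=2$, $d=1$, $g_1(x;\xi)=\sqrt2\,\bar L x$ and $g_2\equiv 0$; this satisfies Assumption~\ref{ass_2.5} with equality. Let only node $1$ move, by one unit. Your left side is $\bar L^2$, while your right side is $\bar L^2/2$. With $m$ nodes the same construction loses a factor $m$.

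What the assumption does give at node-dependent points is the per-node bound $\mathbb{E}\|g_i(x;\xi_i)-g_i(y;\xi_i)\|^2\le m\bar L^2\|x-y\|^2$. That yields $\frac1m\sum_i\mathbb{E}\|\delta_i\|^2\le \bar L^2\|X^{t+1}-X^t\|^2$. Feeding this into your three-way split of $X^{t+1}-X^t$ gives the correct coefficient $9(1-p)\bar L^2/(bq)$ on $C(t)$. It gives $3m(1-p)\bar L^2/(bq)$ on $\|\bar x^{t+1}-\bar x^t\|^2$, which is a factor $m$ too large.

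This is not cosmetic. With the $p,q,b$ of Theorem~\ref{thm_4.1}, it would put a term $3m\eta\bar L^2$ into the coefficient of $\|\bar x^{t+1}-\bar x^t\|^2$ in the Lyapunov recursion. The choice $\eta=1/(48\bar L)$ cannot absorb that once $m$ is large. Your need to discard a spare $1/m$ on the $C(t)$ coefficient is a symptom: the invalid step made every term a factor $m$ too good.

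\paragraph{The missing idea.} Split \emph{before} invoking smoothness, at the level of the stochastic gradients with the same sample:
\[
\delta_{ij}=\bigl(g_i(x_i^{t+1};\xi_{ij})-g_i(\bar x^{t+1};\xi_{ij})\bigr)+\bigl(g_i(\bar x^{t+1};\xi_{ij})-g_i(\bar x^{t};\xi_{ij})\bigr)+\bigl(g_i(\bar x^{t};\xi_{ij})-g_i(x_i^{t};\xi_{ij})\bigr),
\]
then apply Young's inequality with factor $3$.

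The middle piece is evaluated at $(\bar x^{t+1},\bar x^t)$, a pair common to all nodes. The averaged Assumption~\ref{ass_2.5} then applies and gives $3\bar L^2\|\bar x^{t+1}-\bar x^t\|^2$ with no factor $m$.

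The two outer pieces sit at node-dependent points, so use the per-node $m\bar L^2$ bound. Summed over $i$ and divided by $m$, they give $3\bar L^2\bigl(\|X^{t+1}-\mathbf 1\bar x^{t+1}\|^2+\|X^t-\mathbf 1\bar x^t\|^2\bigr)$. Using $\|X^{t+1}-\mathbf 1\bar x^{t+1}\|^2\le 2\rho_t^2C(t)\le 2C(t)$, this is at most $9\bar L^2C(t)$. This is the paper's route, and it produces both stated constants.

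\paragraph{The $1/(bq)$ constant.} The factor you flagged as the main technicality can be made exact. Let $\mu=\nabla f_i(x_i^{t+1})-\nabla f_i(x_i^t)$. Since $\omega_i\in\{0,1\}$ is independent of the i.i.d.\ samples, the variance of $\frac{\omega_i}{bq}\sum_j\delta_{ij}$ equals
\[
\frac{1}{bq}\mathbb{E}\|\delta_{i1}\|^2-\frac{1+bq-b}{bq}\|\mu\|^2 .
\]
Under the theorem's choice, $b=\lceil bq\rceil<bq+1$, so the variance is at most $\frac{1}{bq}\mathbb{E}\|\delta_{i1}\|^2$.
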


\begin{proof}
From the update rule of $y_i(t)$, we have
\begin{align}
\label{eq_b1_8}
& \mathbb{E} \left\| y_i^{t+1} - \nabla f_i(x_i^{t+1}) \right\|^2 \notag\\
=\; & p \, \mathbb{E} \left\| \frac{1}{B_i} \sum_{\xi_{i,j} \in \mathcal{S}_i^{\prime}(t)} g_i(x_i^{t+1}; \xi_{i,j}) - \nabla f_i(x_i^{t+1}) \right\|^2 \notag\\
& + (1 - p) \, \mathbb{E} \left\| y_i^t + \frac{\omega_i^t}{b q} \sum_{\xi_{i,j} \in \mathcal{S}_i(t)} \left( g_i(x_i^{t+1}; \xi_{i,j}) - g_i(x_i^t; \xi_{i,j}) \right) - \nabla f_i(x_i^{t+1}) \right\|^2 \notag\\
\leq\; & \frac{p \sigma_i^2}{B_i} + (1 - p) \, \mathbb{E} \left\| y_i^t + \frac{\omega_i^t}{b q} \sum_{\xi_{i,j} \in \mathcal{S}_i(t)} \left( g_i(x_i^{t+1}; \xi_{i,j}) - g_i(x_i^t; \xi_{i,j}) \right) - \nabla f_i(x_i^{t+1}) \right\|^2 \notag\\
=\; & \frac{p \sigma_i^2}{B_i} + (1 - p) \, \mathbb{E} \left\| y_i^t - \nabla f_i(x_i^t) \right\|^2 \notag\\
& + (1 - p) \, \mathbb{E} \left\| \frac{\omega_i^t}{b q} \sum_{\xi_{i,j} \in \mathcal{S}_i(t)} \left( g_i(x_i^{t+1}; \xi_{i,j}) - g_i(x_i^t; \xi_{i,j}) \right) - \left( \nabla f_i(x_i^{t+1}) - \nabla f_i(x_i^t) \right) \right\|^2 \notag\\
\leq\; & \frac{p \sigma_i^2}{B_i} + (1 - p) \, \mathbb{E} \left\| y_i^t - \nabla f_i(x_i^t) \right\|^2 
+ \frac{1 - p}{b q} \, \mathbb{E} \left\| g_i(x_i^{t+1}; \xi_{i,j}) - g_i(x_i^t; \xi_{i,j}) \right\|^2 \notag\\
\leq\; & \frac{p \sigma_i^2}{B_i} + (1 - p) \, \mathbb{E} \left\| y_i^t - \nabla f_i(x_i^t) \right\|^2 
+ \frac{3(1 - p)}{b q} \, \mathbb{E} \left\| g_i(x_i^{t+1}; \xi_{i,j}) - g_i(\bar{x}^{t+1}; \xi_{i,j}) \right\|^2 \notag \\
& + \frac{3(1 - p)}{b q} \, \mathbb{E} \left\| g_i(\bar{x}^{t+1}; \xi_{i,j}) - g_i(\bar{x}^{t}; \xi_{i,j}) \right\|^2
+ \frac{3(1 - p)}{b q} \, \mathbb{E} \left\| g_i(\bar{x}^{t}; \xi_{i,j}) - g_i(x_i^{t}; \xi_{i,j}) \right\|^2,
\end{align}
where the first equality follows from the update rule of the algorithm, the second step uses the variance bound in Assumption~\ref{ass_2.2}, the third step and the subsequent inequality apply variance decomposition and independence, and the last inequality follows from Young’s inequality.

Averaging the above result for $y_i(t+1)$ over $i = 1,\dots,m$ and using Assumption~\ref{ass_2.5} yields
\begin{align*}
& \mathbb{E} \| Y^{t+1} - \nabla \mathbf{f}(X^{t+1}) \|^2 \\
\leq\; & \sum_{i=1}^m \frac{p \sigma_i^2}{B_i} 
+ (1 - p) \, \mathbb{E} \| Y^t - \nabla \mathbf{f}(X^t) \|^2 \\
& + \frac{3(1 - p) m \bar{L}^2}{b q} \, \mathbb{E} \left[ \|X^{t+1} - \mathbf{1} \bar{x}^{t+1}\|^2 
+ \|\bar{x}^{t+1} - \bar{x}^t\|^2 
+ \|X^t - \mathbf{1} \bar{x}^t\|^2 \right] \\
\leq\; & \sum_{i=1}^m \frac{p \sigma_i^2}{B_i} 
+ (1 - p) \, \mathbb{E} \| Y^t - \nabla \mathbf{f}(X^t) \|^2 \\
& + \frac{3(1 - p) m \bar{L}^2}{b q} \, \mathbb{E} \left[ 
3 \|X^t - \mathbf{1} \bar{x}^t\|^2 
+ \|\bar{x}^{t+1} - \bar{x}^t\|^2 
+ 2 \eta^2 \|S^t - \mathbf{1} \bar{s}^t\|^2 \right],
\end{align*}
where the last inequality follows from \eqref{eq_b1_5} and the condition $\rho_t \le 1$.
Recalling the definition of $V(t)$, this completes the proof of the lemma.
\end{proof}

\begin{lemma}
\label{lemma_U}
Under the parameter choices specified in Theorem~\ref{thm_4.1}, for Algorithm~\ref{algo:d-nss-vr}, we have
\begin{equation*}
\mathbb{E}[U(t+1)] 
\leq (1 - p) \, \mathbb{E}[U(t)] 
+ \frac{9(1 - p)\bar{L}^2}{m b q} \, \mathbb{E}[C(t)] 
+ \frac{3(1 - p)\bar{L}^2}{m b q} \, \mathbb{E}\|\bar{x}^{t+1} - \bar{x}^t\|^2 
+ \sum_{i=1}^m \frac{p \sigma_i^2}{m^2 B_i}.
\end{equation*}
\end{lemma}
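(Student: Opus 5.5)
We mirror the proof of Lemma~\ref{lemma_V}, but apply the decomposition to the averaged error $e^t := \frac{1}{m}\sum_{i=1}^m (y_i^t - \nabla f_i(x_i^t))$ instead of to each node separately. The gain is that cross-node terms vanish by independence, which produces the extra factor $1/m$.

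First we condition on $\zeta_t$. With probability $p$ a large batch is drawn at every node, and then $e^{t+1} = \frac{1}{m}\sum_i \big(\frac{1}{B_i}\sum_j g_i(x_i^{t+1};\xi_{i,j}) - \nabla f_i(x_i^{t+1})\big)$. The samples at different nodes are independent and zero-mean given the past, so exactly as in \eqref{eq_2.1} this contributes $p\cdot\frac{1}{m^2}\sum_i \sigma_i^2/B_i$. With probability $1-p$ the recursive update is used, and
\begin{align*}
e^{t+1} = e^t + \frac{1}{m}\sum_{i=1}^m \Big(\Delta_i^t - (\nabla f_i(x_i^{t+1}) - \nabla f_i(x_i^t))\Big),
\end{align*}
where $\Delta_i^t = \frac{\omega_i^t}{bq}\sum_j (g_i(x_i^{t+1};\xi_{i,j}) - g_i(x_i^t;\xi_{i,j}))$. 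Each bracket has conditional mean zero, since $\mathbb{E}[\omega_i^t]=q$ and $g_i$ is unbiased. The brackets are also mutually independent across $i$ given $\mathcal{F}_t$ and $x^{t+1}$; note that $x^{t+1}$ depends only on $s^t$, so it is determined before the new samples are drawn. Hence the cross term with $e^t$ vanishes, and the cross terms between distinct nodes vanish as well. This gives
\begin{align*}
\mathbb{E}\|e^{t+1}\|^2 \le (1-p)\,\mathbb{E}\|e^t\|^2 + \frac{1-p}{m^2}\sum_{i=1}^m \mathbb{E}\|\Delta_i^t\|^2\ \text{(centered)} + \frac{p}{m^2}\sum_i \frac{\sigma_i^2}{B_i}.
\end{align*}

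Next we bound the variance of $\Delta_i^t$ by its second moment. With $\omega\in\{0,1\}$ and $b$ i.i.d.\ samples, this is at most $\frac{1}{bq}\mathbb{E}\|g_i(x_i^{t+1};\xi)-g_i(x_i^t;\xi)\|^2$, exactly as in \eqref{eq_b1_8}. We then split $x_i^{t+1}-x_i^t$ through $\bar{x}^{t+1}$ and $\bar{x}^t$ with Young's inequality (factor $3$). Summing over $i$ and invoking Assumption~\ref{ass_2.5} at the three pairs of points bounds $\sum_i$ of these terms by $m\bar{L}^2$ times the squared distances, giving
\begin{align*}
\frac{3(1-p)\bar{L}^2}{m\,bq}\Big(\|X^{t+1}-\mathbf{1}\bar{x}^{t+1}\|^2 + m\|\bar{x}^{t+1}-\bar{x}^t\|^2/m' + \|X^t-\mathbf{1}\bar{x}^t\|^2\Big).
\end{align*}
The middle term needs careful bookkeeping. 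Assumption~\ref{ass_2.5} with $x=\bar{x}^{t+1}$, $y=\bar{x}^t$ gives $\sum_i \mathbb{E}\|g_i(\bar{x}^{t+1})-g_i(\bar{x}^t)\|^2 \le m\bar{L}^2\|\bar{x}^{t+1}-\bar{x}^t\|^2$. After the $1/m^2$ prefactor this is $\frac{3(1-p)\bar{L}^2}{m bq}\|\bar{x}^{t+1}-\bar{x}^t\|^2$, which matches the statement.

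Finally we use \eqref{eq_b1_5} together with $\rho_t\le 1$ to bound $\|X^{t+1}-\mathbf{1}\bar{x}^{t+1}\|^2 \le 2\|X^t-\mathbf{1}\bar{x}^t\|^2 + 2\eta^2\|S^t-\mathbf{1}\bar{s}^t\|^2$. The consensus contributions then total at most $3C(t)$, which yields the coefficient $9(1-p)\bar{L}^2/(mbq)$. The main obstacle is justifying the vanishing of the cross terms: we must check that $x^{t+1}$ is measurable before the step-$(t+1)$ samples and the $\omega_i^t$ are drawn, and that the node-level noises are conditionally independent. Once that holds, the rest is the same computation as in Lemma~\ref{lemma_V}, scaled by $1/m$.
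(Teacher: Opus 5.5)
Your proof is correct and gives exactly the stated constants. It places the orthogonality argument differently from the paper.

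\textbf{The paper's route.} The paper first asserts $\mathbb{E}[U(t)] = \frac{1}{m^2}\sum_{i=1}^m \mathbb{E}\|y_i^t - \nabla f_i(x_i^t)\|^2$, i.e.\ $\mathbb{E}[U(t)] = \mathbb{E}[V(t)]/m$, on the grounds that the $y_i^t$ are unbiased and independent across nodes. It then obtains the lemma by dividing the per-node bound \eqref{eq_b1_8} (equivalently Lemma~\ref{lemma_V}) by $m$. This is a one-line reduction once Lemma~\ref{lemma_V} is available, but the stated justification is loose. The $y_i^t$ are not independent across nodes, because each $x_i^t$ mixes information from all nodes through \texttt{FastMix}. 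The identity is still true, but proving it requires the induction $\mathbb{E}\langle e_i^{t+1}, e_j^{t+1}\rangle = (1-p)\,\mathbb{E}\langle e_i^t, e_j^t\rangle$ for $i\neq j$, where $e_i^t := y_i^t - \nabla f_i(x_i^t)$. That induction is essentially your one-step computation.

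\textbf{Your route.} You run the recursion directly on the averaged error $e^t$. You only use that the fresh step-$(t+1)$ innovations are conditionally mean-zero and conditionally independent across nodes given $\mathcal{F}_t$, and $\mathcal{F}_t$ does determine $x^{t+1}$. This makes your argument self-contained: it never needs a claim about the accumulated errors.

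Two steps need a line of justification, in your write-up as in the paper's.
\begin{itemize}
\item For the node-dependent pairs $(x_i^{t+1},\bar{x}^{t+1})$ and $(\bar{x}^t, x_i^t)$, Assumption~\ref{ass_2.5} does not apply directly, since it is stated for a common pair of points. Use its per-node consequence $\mathbb{E}\|g_i(x;\xi)-g_i(y;\xi)\|^2 \le m\bar{L}^2\|x-y\|^2$ and sum over $i$; this gives the same constant.
\item Bounding the variance of $\Delta_i^t$ by $\frac{1}{bq}\mathbb{E}\|d\|^2$, with $d := g_i(x_i^{t+1};\xi)-g_i(x_i^t;\xi)$, is not automatic for a Bernoulli-thinned minibatch. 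The exact variance is $\frac{1}{bq}\mathbb{E}\|d\|^2 + \big(\frac{b-1}{bq}-1\big)\|\mathbb{E} d\|^2$, so the bound needs $q \ge 1 - 1/b$. The choices $b = \lceil \sqrt{\sum_i B_i}/m\rceil$ and $q = \sqrt{\sum_i B_i}/(bm)$ in Theorem~\ref{thm_4.1} guarantee this. This is precisely where the lemma's parameter hypothesis is used.
\end{itemize}
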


\begin{proof}
Noting that
\begin{equation*}
\mathbb{E}[U(t)] = \mathbb{E} \left\| \frac{1}{m} \sum_{i=1}^m y_i^t - \nabla f_i(x_i^t) \right\|^2 
= \frac{1}{m^2} \sum_{i=1}^m \mathbb{E} \left\| y_i^t - \nabla f_i(x_i^t) \right\|^2,
\end{equation*}
where the last equality holds since $\mathbb{E}[y_i^t] = \nabla f_i(x_i^t)$ for each $i$ and the variables $y_i^t$ are independent.
Combining this with the upper bound of $\mathbb{E}\| y_i^{t+1} - \nabla f_i(x_i^{t+1}) \|^2$ in \eqref{eq_b1_8} yields
\begin{equation*}
\mathbb{E}[U(t+1)] 
\leq (1 - p) \, \mathbb{E}[U(t)] 
+ \frac{9(1 - p)\bar{L}^2}{m b q} \, \mathbb{E}[C(t)] 
+ \frac{3(1 - p)\bar{L}^2}{m b q} \, \mathbb{E}\|\bar{x}^{t+1} - \bar{x}^t\|^2 
+ \sum_{i=1}^m \frac{p \sigma_i^2}{m^2 B_i},
\end{equation*}
which completes the proof of Lemma~\ref{lemma_U}.
\end{proof}

With the above lemmas established, we are now ready to prove Theorem~\ref{thm_4.1}.
\begin{proof}[Proof of Theorem~\ref{thm_4.1}]
Combining the results of Lemmas~\ref{lemma_descent}, \ref{lemma_C}, \ref{lemma_V}, and~\ref{lemma_U}, we obtain the following inequality:
\begin{align*}
& \mathbb{E}[\Phi(t+1)] \\
= \; & \mathbb{E}\left[f(\bar{x}^{t+1})+\frac{\eta}{p} U(t+1)+\frac{\eta}{m p} V(t+1)+\frac{1}{\eta} C(t+1)\right] \\
\leq \; & \mathbb{E}\left[f(\bar{x}^t)-\frac{\eta}{2}\|\nabla f(\bar{x}^t)\|^2-\left(\frac{1}{2 \eta}-\frac{\bar{L}}{2}\right)\|\bar{x}^{t+1}-\bar{x}^t\|^2+\eta U(t)+\bar{L}^2 \eta C(t)\right] \\
& +\frac{\eta}{p}\left[(1-p) \mathbb{E}[U(t)]+\frac{9(1-p) \bar{L}^2}{m b q} \mathbb{E}[C(t)]+\frac{3(1-p) \bar{L}^2}{m b q} \mathbb{E}\|\bar{x}^{t+1}-\bar{x}^t\|^2+\sum_{i=1}^m\frac{\eta \sigma_i^2}{m^2 B_i}\right] \\
& +\frac{\eta}{m p}\left[(1-p) \mathbb{E}[V(t)]+\frac{9(1-p) \bar{L}^2}{bq} \mathbb{E}[C(t)]+\frac{3(1-p) \bar{L}^2}{bq} \mathbb{E}\|\bar{x}^{t+1}-\bar{x}^t\|^2+\sum_{i=1}^m\frac{\eta \sigma_i^2}{m B_i}\right] \\
& +\frac{1}{\eta}\left[38 c m \rho_t^2 \mathbb{E}[C(t)]+6 \rho_t^2 \eta^2 m V(t) +24 c \rho_t^2 m^2\mathbb{E}\|\bar{x}^{t+1}-\bar{x}^t\|^2+\sum_{i=1}^m\frac{6 p \rho_t^2 \eta^2 \sigma_i^2}{B_i}\right] \\
= \; & \mathbb{E}\left[f(\bar{x}^t)-\frac{\eta}{2}\|\nabla f(\bar{x}^t)\|^2+\left(\eta+\frac{\eta(1-p)}{p}\right) U(t)+\left(\frac{\eta(1-p)}{m p}+6 \rho_t^2 \eta m \right) V(t)\right. \\
& +\left(\bar{L}^2 \eta+\frac{18(1-p) \eta \bar{L}^2}{m b q p}+\frac{38 c m \rho_t^2}{ \eta}\right) C(t)+\sum_{i=1}^m\left(\frac{2 \eta \sigma_i^2}{m^2 B_i}+\frac{6 p \rho_t^2 \eta \sigma_i^2}{B_i}\right) \\
& \left.-\left(\frac{1}{2 \eta}-\frac{\bar{L}}{2}-\frac{6(1-p) \bar{L}^2 \eta}{m b q p}-\frac{24 c \rho_t^2 m^2}{\eta}\right)\|\bar{x}^{t+1}-\bar{x}^t\|^2\right] \\
\leq \; & \mathbb{E}\left[\Phi(t)-\frac{\eta}{2}\|\nabla f(\bar{x}^t)\|^2-\frac{1}{2 \eta} C(t)+\sum_{i=1}^m\frac{3 \eta \sigma_i^2}{m^2 B_i}\right],
\end{align*}
where the last inequality follows from the parameter settings in Theorem~\ref{thm_4.1}. Specifically, letting
\begin{align*}
& \eta = \frac{1}{48\bar{L}}, \quad
B_i = \max\left\{ \left\lceil \frac{64 \sigma_i \sum_{j=1}^m \sigma_j}{m^2\epsilon^2} \right\rceil, 1 \right\}, \quad b = \left\lceil \frac{\sqrt{\sum_{i=1}^m B_i}}{m} \right\rceil,\quad
q = \frac{\sqrt{\sum_{i=1}^m B_i}}{b m},\quad p = \frac{b q}{b q + \sum_{i=1}^m B_i / m},\\
& T = \left\lceil \frac{384 \Delta \bar{L}}{\epsilon^2} + \frac{2}{p} \right\rceil,\quad R_0 = \left\lceil \frac{2+\sqrt{2}}{2\sqrt{\chi}}\log\left( 1 + \frac{16m \sum_{i=1}^m \|y_i^0\|^2}{T \epsilon^2} \right) \right\rceil, \quad R_t = \left\lceil \frac{2+\sqrt{2}}{2\sqrt{\chi}}\log\left( 1344cm^2\right)\right\rceil,
\end{align*}
we have 
\begin{align*}
& \frac{\eta(1-p)}{m p}+6 \rho_t^2 \eta m \le \frac{\eta}{mp} \\
& \bar{L}^2 \eta+\frac{18(1-p) \eta \bar{L}^2}{m b q p}+\frac{38 c m \rho_t^2}{ \eta} \le \frac{1}{2\eta} \\
& \frac{6 p \rho_t^2 \eta \sigma_i^2}{B_i} \le \frac{\eta \sigma_i^2}{m^2 B_i} \\
& \frac{1}{2 \eta}-\frac{\bar{L}}{2}-\frac{6(1-p) \bar{L}^2 \eta}{m b q p}-\frac{24 c \rho_t^2 m^2}{\eta} \ge 0.
\end{align*}

Summing over $t = 0,\dots,T-1$ yields
\begin{align}
\label{eq_b2_1}
\mathbb{E}\left[\frac{1}{T} \sum_{t=0}^{T-1}\|\nabla f(\bar{x}^t)\|^2\right] 
\leq\; & \frac{2}{\eta T} \sum_{t=0}^{T-1} \mathbb{E}\left[\Phi(t)-\Phi(t+1)-\frac{1}{2 \eta} C(t)+\sum_{i=1}^m \frac{3 \eta \sigma_i^2}{m^2 B_i}\right] \notag\\
\leq\; & \frac{2}{\eta T} \sum_{t=0}^{T-1} \mathbb{E}\left[\Phi(t)-\Phi(t+1)-\frac{1}{2 \eta}\|X^t - \mathbf{1} \bar{x}^t\|^2+\sum_{i=1}^m \frac{3 \eta \sigma_i^2}{m^2 B_i}\right] \notag\\
=\; & \frac{2 \mathbb{E}[\Phi(0)-\Phi(T)]}{\eta T} - \frac{1}{\eta^2 T} \sum_{t=0}^{T-1} \|X^t - \mathbf{1} \bar{x}^t\|^2 + \sum_{i=1}^m \frac{6 \sigma_i^2}{m^2 B_i},
\end{align}
where the second inequality follows from the definition of $C(t)$.

For $\mathbb{E}[\Phi(0)-\Phi(T)]$, we have
\begin{align}
\label{eq_b2_2}
& \mathbb{E}[\Phi(0)-\Phi(T)] \notag\\
=\; & \mathbb{E}\left[f(\bar{x}^0)-f(\bar{x}^T)+\frac{\eta}{p} U^0+\frac{\eta}{m p} V^0+\frac{1}{m \eta} C^0 - \frac{\eta}{p} U(T) - \frac{\eta}{m p} V(T) - \frac{1}{m \eta} C(T)\right] \notag\\
\leq\; & f(\bar{x}^0)-f^* + \sum_{i=1}^m \frac{2 \eta \sigma_i^2}{m^2 B_i p} + \frac{\eta}{m} \mathbb{E}\|S^0 - \mathbf{1} \bar{s}^0\|^2,
\end{align}
where the inequality follows from the nonnegativity of $U(T)$, $V(T)$, and $C(T)$, together with Assumption~\ref{ass_2.2}.

From the parameter choice we have $Tp \ge 2$. Substituting \eqref{eq_b2_2} into \eqref{eq_b2_1} gives
\begin{align*}
\mathbb{E}\left[\frac{1}{T} \sum_{t=0}^{T-1} \|\nabla f(\bar{x}^t)\|^2\right] 
\leq\; \frac{2 \Delta}{\eta T} + \frac{2}{m T} \mathbb{E} \|S^0 - \mathbf{1} \bar{s}^0\|^2 + \sum_{i=1}^m \frac{8 \sigma_i^2}{m^2 B_i} - \frac{1}{\eta^2 T} \sum_{t=0}^{T-1} \mathbb{E} \|X^t - \mathbf{1} \bar{x}^t\|^2.
\end{align*}

Therefore, the final output satisfies the following error bound:
\begin{align*}
& \quad \mathbb{E} \left[ \left\|\nabla f\left(x_{i,\text{out}}\right)\right\|^2 \right]\\
&= \frac{1}{T} \sum_{t=0}^{T-1} \mathbb{E}\left\|\nabla f\left(x_i^t\right)\right\|^2 \\
&\leq \frac{2}{T} \sum_{t=0}^{T-1} \mathbb{E}\left[\| \nabla f(\bar{x}^t) \|^2 + \| \nabla f(x_i^t) - \nabla f(\bar{x}^t) \|^2\right] \\
&\leq \frac{2}{T} \sum_{t=0}^{T-1} \mathbb{E}\left[\|\nabla f(\bar{x}^t)\|^2 + \bar{L}^2 \left\|x_i^t - \bar{x}^t\right\|^2\right] \\
&\leq \frac{2}{T} \sum_{t=0}^{T-1} \mathbb{E} \|\nabla f(\bar{x}^t)\|^2 + \frac{2 \bar{L}^2}{T} \sum_{t=0}^{T-1} \mathbb{E} \|X^t - \mathbf{1} \bar{x}^t\|^2 \\
&\leq \frac{4 \Delta}{\eta T} + \frac{4}{m T} \mathbb{E}\|S^0 - \mathbf{1} \bar{s}^0\|^2 + \sum_{i=1}^m \frac{16 \sigma_i^2}{m^2 B_i} - \left( \frac{2}{\eta^2 T} - \frac{2 \bar{L}^2}{T} \right) \sum_{t=0}^{T-1} \mathbb{E} \|X^t - \mathbf{1} \bar{x}^t\|^2 \\
&\leq \frac{4 \Delta}{\eta T} + \frac{4}{m T} \mathbb{E}\|S^0 - \mathbf{1} \bar{s}^0\|^2 + \sum_{i=1}^m \frac{16 \sigma_i^2}{m^2 B_i} \\
& \leq \epsilon^2,
\end{align*}
where the last inequality holds by the selection of parameters.

The sampling complexity of the algorithm is bounded by
\begin{align*}
& \quad \sum_{i=1}^m B_i + \left(p\sum_{i=1}^m B_i + (1-p)qbm\right)T \\
& = O \left( m + \frac{\bar{\sigma}_{\text{AM}}^2}{\epsilon^2} + \left( \sqrt{m} + \frac{\bar{\sigma}_{\text{AM}}}{\epsilon} \right) \left( \frac{\Delta \bar{L}}{\epsilon^2} + \sqrt{m} + \frac{\bar{\sigma}_{\text{AM}}}{\epsilon}\right) \right)\\
& = O \left( \frac{\Delta \bar{L} \bar{\sigma}_{\text{AM}}}{\epsilon^3} + \frac{\bar{\sigma}_{\text{AM}}^2}{\epsilon^2}  + \sqrt{m}\frac{\Delta \bar{L}}{\epsilon^2} + m \right),
\end{align*}
and the communication complexity is bounded by
\begin{align*}
    \sum_{i=0}^{T-1} R_i = \tilde{O}\left( \dfrac{\Delta \bar{L}}{\sqrt{\chi} \epsilon^2} + \dfrac{\bar{\sigma}_{\text{AM}}}{\sqrt{\chi} \epsilon}\right).
\end{align*}
This completes the proof of Theorem~\ref{thm_4.1}.
\end{proof}

\section{The Proof of Theorem~\ref{thm_4.2}}
For the lower bound on the complexity of variance reduction methods, we continue to use the function $F_D$ defined in \eqref{eq_FD}. However, the stochastic gradient constructed in \eqref{eq_G1} does not satisfy the mean-square smoothness Assumption~\ref{ass_2.5}. Therefore, a new stochastic gradient needs to be constructed. Consider the following stochastic gradient \cite{arjevani2023lower}:
\begin{align*}
    [\tilde{G}_D(x, \xi; p)]_i &:= \nabla_i F_D(x) \left( 1 + \Theta_i(x) \left( \frac{a}{p} - 1 \right) \right),
\end{align*}
where $\xi \sim \text{Bernoulli}(p)$,
\begin{align*}
    \Theta_i(x) &= \Gamma \left( 1 - \left( \frac{1}{T} \sum_{k=i}^T \Gamma^2 (|x_k|) \right)^{\frac{1}{2}} \right), \quad \Gamma(t) = \frac{\int_0^t \Lambda(\tau) d\tau}{\int_0^{1/2} \Lambda(\tau') d\tau'},
\end{align*}
and 
\begin{align*}
    \Lambda(t) = \begin{cases}
        0, & t \geq \frac{1}{2}; \\
        \exp\left( -\frac{1}{100 + t (1/2 - t)} \right), & 0 < t < \frac{1}{2}.
    \end{cases}
\end{align*}
The stochastic gradient $\tilde{G}_D$ satisfies the properties stated in the following lemma.
\begin{lemma}[{\citet{arjevani2023lower}}]
\label{lemma_2.8}
The stochastic gradient $\tilde{G}_D(x,\xi;p)$ satisfies:
\begin{enumerate}
    \item $\mathbb{E}_\xi [\tilde{G}_D(x, \xi; p)] = \nabla F_D(x)$.
    \item For any $x \in \mathbb{R}^D$, it holds that 
    \begin{align*}
        \mathbb{E}_\xi \|\tilde{G}_D(x, \xi) - \nabla F_D(x)\|^2 \leq \frac{a^2 (1 - p)}{p},
    \end{align*}
    where $a = 23$. 
    \item For any $x, y \in \mathbb{R}^D$,
    \begin{align*}
    \mathbb{E}_\xi \|\tilde{G}_D(x, \xi; p) - \tilde{G}_D(y, \xi; p)\|^2 \leq \frac{\ell_2^2}{p} \|x - y\|^2,
    \end{align*}
    where $\ell_2 = 328$. 
\end{enumerate}
\end{lemma}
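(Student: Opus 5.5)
The three properties come from the variance decomposition of $\tilde G_D$. I read the multiplier $\frac{a}{p}$ in the definition as the Bernoulli ratio $\frac{\xi}{p}$, by analogy with \eqref{eq_G1}; otherwise unbiasedness fails. I would follow the construction of \citet{arjevani2023lower}, using the structural facts about $F_D$ from \citet{carmon2020lower}. Write $h_i(x) := \nabla_i F_D(x)\,\Theta_i(x)$, so that
\begin{align*}
\tilde G_D(x,\xi;p) = \nabla F_D(x) + \Big(\frac{\xi}{p}-1\Big) h(x).
\end{align*}
The random factor is a single scalar, independent of $x$, with mean $0$ and variance $\frac{1-p}{p}$. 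Property~1 is then immediate from $\mathbb{E}[\xi/p]=1$.

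\textbf{Variance.} The decomposition gives $\mathbb{E}\|\tilde G_D - \nabla F_D\|^2 = \frac{1-p}{p}\|h(x)\|^2$, so it suffices to show $\|h(x)\|\le 23$ uniformly in $x$ and $D$. I would use three facts.
\begin{itemize}
\item $\Theta_i\in[0,1]$, and $\Theta_i$ is nonincreasing in $i$.
\item $\Theta_i(x)>0$ forces the normalized tail sum $\frac{1}{D}\sum_{k\ge i}\Gamma^2(|x_k|)$ to be small (I read the $T$ in the definition of $\Theta_i$ as the dimension $D$). Because $\Gamma$ saturates at $1$ for arguments $\ge 1/2$, this puts all but a controlled portion of the coordinates $k\ge i$ below $1/2$.
\item By the chain structure of $F_D$, $\nabla_k F_D$ vanishes when $|x_{k-1}|<1/2$ and $|x_k|<1$. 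This follows from $\psi\equiv 0$ on $(-\infty,1/2]$ together with $\psi'(t),\psi(t)=0$ there.
\end{itemize}
Together these confine the support of $h$ to an $O(1)$ block of coordinates near the progress frontier. Combined with $\|\nabla F_D\|_\infty\le 23$, this gives $\|h\|^2\le 23^2$.

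\textbf{Mean-squared smoothness.} By the same decomposition,
\begin{align*}
\mathbb{E}\|\tilde G_D(x,\xi;p)-\tilde G_D(y,\xi;p)\|^2 = \|\nabla F_D(x)-\nabla F_D(y)\|^2 + \frac{1-p}{p}\|h(x)-h(y)\|^2 .
\end{align*}
The first term is at most $\ell_1^2\|x-y\|^2$ by Lemma~\ref{lemma_2.1}. It remains to show that $h$ is Lipschitz with a dimension-free constant; then $\ell_1^2+\frac{1-p}{p}L_h^2\le \frac{\ell_2^2}{p}$ once $\ell_2^2 \ge \ell_1^2+L_h^2$. I would split
\begin{align*}
h(x)-h(y) = \big(\nabla F_D(x)-\nabla F_D(y)\big)\odot\Theta(x) + \nabla F_D(y)\odot\big(\Theta(x)-\Theta(y)\big).
\end{align*}
The first piece is at most $\ell_1\|x-y\|$ because $|\Theta_i|\le 1$. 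For the second piece, $\Gamma$ is smooth with bounded derivative, and the map $x\mapsto \big(\frac1D\sum_{k\ge i}\Gamma^2(|x_k|)\big)^{1/2}$ is a normalized $\ell_2$ tail norm of a Lipschitz coordinatewise transform. Its gradient is therefore controlled, since the square-root normalization cancels the growth in the number of terms. The derivative of the outer $\Gamma$ vanishes near $0$, and this removes the singularity of the square root there.

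\textbf{Main obstacle.} The hard step is bounding $\sum_i (\nabla_i F_D(y))^2\,(\Theta_i(x)-\Theta_i(y))^2$ without picking up a factor of $D$. Each $\Theta_i$ depends on all tail coordinates, so a naive bound sums $D$ Lipschitz terms. I would first try to reuse the support argument from the variance step: $\nabla_i F_D(y)\,\Theta_i$ is nonzero only on an $O(1)$ window around the frontier, treating the cases where $x$ and $y$ have different frontiers by symmetry. Combining this with the coordinatewise Lipschitz bound $|\Theta_i(x)-\Theta_i(y)|\lesssim\|x-y\|$ would give $L_h$ as an absolute constant. I would not re-derive the numerical constants; the exact value $\ell_2=328$ would be taken from \citet{arjevani2023lower}.
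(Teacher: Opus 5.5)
The paper does not prove this lemma; it imports it from \citet{arjevani2023lower}. Your skeleton matches theirs: one scalar factor $\xi/p-1$ multiplies $h=\nabla F_D\odot\Theta$, so the variance equals $\frac{1-p}{p}\|h(x)\|^2$, and Property~3 reduces to a dimension-free Lipschitz bound on $h$. Reading $\frac{a}{p}$ as $\frac{\xi}{p}$ is also right.

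\textbf{The gap: your reading of $\Theta_i$ makes Property~2 false.} The support-confinement step carries both Properties~2 and~3. You keep the factor $\frac1T$ inside $\Theta_i$ and read it as $\frac1D$. Then $\Theta_i(x)>0$ only says $\sum_{k\ge i}\Gamma^2(|x_k|)<D$, which confines nothing, and the conclusion you draw is false. Take $x$ whose first $\lfloor D/4\rfloor$ coordinates equal $1$ and whose other coordinates vanish. Every normalized tail sum is at most $\frac14$, so $\Theta\equiv1$ and $h(x)=\nabla F_D(x)$. Each interior coordinate of $\nabla F_D(x)$ equals $-e^{-1/2}-4\phi(1)<-9$. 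Hence $\|h(x)\|^2\ge 81(\lfloor D/4\rfloor-2)$, and no dimension-free $a$ exists under your reading. The normalization is a transcription slip of the same kind as $\frac{a}{p}$: \citet{arjevani2023lower} use the unnormalized tail norm $\Theta_i(x)=\Gamma\bigl(1-\|\Gamma(|x_{\ge i}|)\|\bigr)$.

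Two auxiliary facts are also misstated.
\begin{itemize}
\item $\Theta_i$ is nondecreasing in $i$, not nonincreasing, since shorter tails have smaller norm.
\item $\nabla_kF_D(x)=0$ needs $|x_{k-1}|\le\frac12$ and $|x_k|\le\frac12$, not $|x_k|<1$. The reason is that $\psi'>0$ on $(\frac12,\infty)$ and $\phi>0$ everywhere.
\end{itemize}

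\textbf{The repair.} With the correct $\Theta_i$, the confinement is exact rather than an $O(1)$ window. If $|x_k|\ge\frac12$ for some $k\ge i$, then $\Gamma(|x_k|)=1$, the tail norm is at least $1$, and $\Theta_i(x)=0$. So $\Theta_i(x)\ne0$ forces $|x_k|<\frac12$ for all $k\ge i$. Then $\nabla_iF_D(x)\ne0$ forces $|x_{i-1}|>\frac12$ (reading $x_0:=1$, matching the term $-\psi(1)\phi([x]_1)$), i.e.\ $i=\mathrm{prog}_{1/2}(x)+1$. Thus $h(x)$ has at most one nonzero entry, which gives Property~2 with $a=23\ge\|\nabla F_D\|_\infty$.

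This also removes your ``main obstacle,'' which comes from applying the support argument at the wrong stage. In your global split, the piece $\nabla F_D(y)\odot(\Theta(x)-\Theta(y))$ is not confined near any frontier. For $x=0$ and $y$ with its first $K$ coordinates equal to $1$, it is nonzero on all of $1,\dots,K$, so the window argument does not apply to it. Instead, apply confinement to $h(x)$ and $h(y)$ before splitting. Then $h(x)-h(y)$ has at most two nonzero entries, so $\|h(x)-h(y)\|^2\le 2\max_i|h_i(x)-h_i(y)|^2$. For each $i$,
\[
|h_i(x)-h_i(y)|\le|\nabla_iF_D(x)-\nabla_iF_D(y)|+23\,|\Theta_i(x)-\Theta_i(y)|\le(\ell_1+23L_\Gamma^2)\|x-y\|.
\]
This holds because $\Theta_i$ composes the $L_\Gamma$-Lipschitz maps $\Gamma$ and $x\mapsto\Gamma(|x_{\ge i}|)$ with the $1$-Lipschitz Euclidean norm. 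No differentiability of the square root is needed. You obtain $\mathbb{E}\|\tilde G_D(x,\xi;p)-\tilde G_D(y,\xi;p)\|^2\le\frac{\ell_1^2+2(\ell_1+23L_\Gamma^2)^2}{p}\|x-y\|^2$, an absolute constant, with no case analysis on frontiers.
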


For variance reduction methods, we can establish the sample complexity lower bound in Theorem~\ref{thm_4.2}.
\begin{proof}[Proof of Theorem~\ref{thm_4.2}]
Let $\#\text{SFO}$ denote the total number of stochastic first-order oracle calls of the algorithm, and $\#\text{SFO}_i$ denote the number of oracle calls performed at node $i$. For any algorithm $\mathcal{A}$, we have
\begin{align*}
\mathbb{E} \left[ \#\text{SFO}_i \right] = c_i \cdot \#\text{SFO}, \quad \text{where} \quad c_i > 0 \quad \text{and} \quad \sum_{i=1}^m c_i = 1.
\end{align*}
Letting $D = \sum_i D_i$, we define a matrix sequence $\{U_i\}_{i=1}^m$ such that $U_i \in \mathbb{R}^{D_i\times D}$, $U_i U_i^\top = I$, and $U_i U_j^\top = \mathbf{0}$ for any $1\le i \ne j \le m$.

The functions are constructed as
\begin{align*}
f_i(x) &= \frac{\sqrt{m p_i} \, \bar{L} \lambda_i^2}{\ell} \, F_{D_i} \left( \frac{U_i x}{\lambda_i} \right), \;
g_i(x, \xi; p_i) = \frac{\sqrt{m p_i} \, \bar{L} \lambda_i}{\ell} \, U_i^\top \tilde{G}_{D_i} \left( \frac{U_i x}{\lambda_i}, \xi; p_i \right), \\
\text{where} & \quad \ell = \max\{\ell_1, \ell_2\}, \quad \lambda_i  = \frac{\ell}{\sqrt{p_i} \bar{L}} 
\left( \frac{\sigma_i}{\bar{\sigma}_{2/3}} \right)^{\tfrac{1}{3}} 
\cdot 2 \epsilon, \quad
\frac{1}{p_i} = \frac{\left( \sigma_i^2 \bar{\sigma}_{2/3} \right)^{2/3}}{4 m a^2 \epsilon^2} + 1. 
\end{align*}
First, we verify that the constructed functions satisfy the mean-square smoothness Assumption~\ref{ass_2.5}. It holds that
\begin{align*}
& \quad \frac{1}{m} \sum_{i=1}^m \mathbb{E} \left[ \left\| g_i(x, \xi; p_i) - g_i(y, \xi; p_i) \right\|^2 \right] \\
&= \frac{1}{m} \sum_{i=1}^m \mathbb{E} \left[ \left\| \frac{\sqrt{m p_i} \bar{L} \lambda_i}{\ell} 
U_i^\top \left( \tilde{G}_{D_i} \left( \frac{U_i x}{\lambda_i}, \xi; p_i \right) 
- \tilde{G}_{D_i} \left( \frac{U_i y}{\lambda_i}, \xi; p_i \right) \right) \right\|^2 \right] \\
&= \sum_{i=1}^m \frac{p_i \bar{L}^2 \lambda_i^2}{\ell^2} 
\mathbb{E} \left[ \left\| \tilde{G}_{D_i} \left( \frac{U_i x}{\lambda_i}, \xi; p_i \right) 
- \tilde{G}_{D_i} \left( \frac{U_i y}{\lambda_i}, \xi; p_i \right) \right\|^2 \right] \\
&\le \sum_{i=1}^m \frac{p_i \bar{L}^2 \lambda_i^2}{\ell^2} \cdot 
\frac{\ell^2}{p_i} \left\| \frac{U_i x}{\lambda_i} - \frac{U_i y}{\lambda_i} \right\|^2 \\
&= \bar{L}^2 \sum_{i=1}^m \left\| U_i (x - y) \right\|^2 \\
&= \bar{L}^2 \left\| x - y \right\|^2.
\end{align*}
Then we verify that the stochastic gradients satisfy Assumption~\ref{ass_2.2}. By Property~1 of Lemma~\ref{lemma_2.8}, $g_i(x,\xi;p_i)$ is unbiased, and the variance has the following upper bound:
\begin{align*}
& \quad \mathbb{E} \left[ \left\| g_i(x, \xi; p_i) - \nabla f_i(x) \right\|^2 \right] \\
&= \mathbb{E} \left[ \left\| \frac{\sqrt{m p_i} \bar{L} \lambda_i}{\ell} 
U_i^\top \left( \tilde{G}_{D_i} \left( \frac{U_i x}{\lambda_i}, \xi; p_i \right) 
- \nabla F_{D_i} \left( \frac{U_i x}{\lambda_i} \right) \right) \right\|^2 \right] \\
&= \frac{m p_i \bar{L}^2 \lambda_i^2}{\ell^2} 
\mathbb{E} \left[ \left\| \tilde{G}_{D_i} \left( \frac{U_i x}{\lambda_i}, \xi; p_i \right) 
- \nabla F_{D_i} \left( \frac{U_i x}{\lambda_i} \right) \right\|^2 \right] \\
&\le 4 m \epsilon^2 
\left( \frac{\sigma_i}{\bar{\sigma}_{2/3}} \right)^{2/3} \cdot 
\frac{a^2 (1 - p_i)}{p_i} \\
&= 4 m \epsilon^2
\left( \frac{\sigma_i}{\bar{\sigma}_{2/3}} \right)^{2/3} \cdot 
\frac{a^2 \left( \sigma_i^2 \bar{\sigma}_{2/3} \right)^{2/3}}{4 m a^2 \epsilon^2} \\
&= \sigma_i^2,
\end{align*}
where the inequality holds by the Property~2 of Lemma~\ref{lemma_2.8}.

Letting $D_i = \left\lfloor \dfrac{\sqrt{m} \Delta \sqrt{p_i} \ell}{4 \Delta_0 \ell \epsilon^2} 
\left( \dfrac{\bar{\sigma}_{2/3}}{\sigma_i} \right)^{2/3} c_i \right\rfloor$, we have
\begin{align*}
f(0) - \inf_x f(x)
&= \frac{1}{m} \sum_{i=1}^m f_i(0) - \inf_x \frac{1}{m} \sum_{i=1}^m f_i(x) \\
&\le \frac{1}{m} \sum_{i=1}^m f_i(0) - \frac{1}{m} \sum_{i=1}^m \inf_x f_i(x) \\
&= \frac{1}{\sqrt{m}} \sum_{i=1}^m \frac{\sqrt{p_i} \bar{L} \lambda_i^2}{\ell} 
\left( F_{D_i}(0) - \inf_x F_{D_i} \left( \frac{U_i x}{\lambda_i} \right) \right) \\
&= \frac{4 \epsilon^2 \ell}{ \sqrt{m} \bar{L}} 
\sum_{i=1}^m \left( \frac{\sigma_i}{\bar{\sigma}_{2/3}} \right)^{2/3} \cdot 
\frac{1}{\sqrt{p_i}} 
\left( F_{D_i}(0) - \inf_x F_{D_i} \left( \frac{U_i x}{\lambda_i} \right) \right) \\
&\le \frac{4 \epsilon^2 \ell}{ \sqrt{m} \bar{L}} 
\sum_{i=1}^m \left( \frac{\sigma_i}{\bar{\sigma}_{2/3}} \right)^{2/3} \cdot 
\frac{1}{\sqrt{p_i}} \cdot \Delta_0 \cdot 
\frac{ \sqrt{m} \Delta \sqrt{p_i} \bar{L}}{4 \Delta_0 \ell \epsilon^2} 
\left( \frac{\bar{\sigma}_{2/3}}{\sigma_i} \right)^{2/3} c_i \\
&= \Delta \sum_{i=1}^m c_i \\
&= \Delta.
\end{align*}
Therefore, under this choice of $D_i$, Assumption~\ref{ass_2.1} is satisfied. 
This completes the verification that the constructed functions satisfy Assumptions 1, 2, and 5.

If $\#\mathrm{SFO} \le \dfrac{1}{64 \Delta_0 \ell a} \cdot \dfrac{\Delta \bar{L} \bar{\sigma}_{2/3}}{\epsilon^3}$, and $\epsilon \le \min_i \sqrt{\dfrac{\sqrt{m} \Delta \sqrt{p_i} \ell}{8 \Delta_0 \ell \epsilon^2} 
\left( \dfrac{\bar{\sigma}_{2/3}}{\sigma_i} \right)^{2/3} c_i}$, then we have
\begin{align*}
& \quad \mathbb{E} \left[ \left\| \nabla f(x) \right\|^2 \right] \\
&= \mathbb{E} \left[ \left\| \frac{1}{m} \sum_{i=1}^m \nabla f_i(x) \right\|^2 \right] \\
&= \mathbb{E} \left[ \left\| \frac{1}{\sqrt{m}} \sum_{i=1}^m \frac{\sqrt{p_i} \bar{L} \lambda_i^2}{\ell} U_i^\top \nabla F_{D_i} \left( \frac{U_i x}{\lambda_i} \right) \right\|^2 \right] \\
&= \frac{4 \epsilon^2}{m} \sum_{i=1}^m \left( \frac{\sigma_i}{\bar{\sigma}_{2/3}} \right)^{2/3} 
\mathbb{E} \left[ \left\| \nabla F_{D_i} \left( \frac{U_i x}{\lambda_i} \right) \right\|^2 \right] \\
&> \frac{4 \epsilon^2}{m} \sum_{i=1}^m \left( \frac{\sigma_i}{\bar{\sigma}_{2/3}} \right)^{2/3} 
\mathbb{P} \left( [U_i x]_{D_i} = 0 \right) \\
&= \frac{4 \epsilon^2}{m} \sum_{i=1}^m \left( \frac{\sigma_i}{\bar{\sigma}_{2/3}} \right)^{2/3}
\mathbb{P} \left( [U_i x]_{D_i} = 0 \,\middle|\, \#\mathrm{SFO}_i < \frac{D_i - 1}{2 p_i} \right)
\mathbb{P} \left( \#\mathrm{SFO}_i < \frac{D_i - 1}{2 p_i} \right) \\
&\ge \frac{4 \epsilon^2}{m} \sum_{i=1}^m \left( \frac{\sigma_i}{\bar{\sigma}_{2/3}} \right)^{2/3} \cdot \frac{1}{2} \cdot \frac{1}{2} \\
&= \epsilon^2.
\end{align*}
The final inequality above follows from Lemma~\ref{lemma_2.3} and the following probability lower bound:
\begin{align*}
\mathbb{P} \left( \#\mathrm{SFO}_i < \frac{D_i - 1}{2p_i} \right)
&= 1 - \mathbb{P} \left( \#\mathrm{SFO}_i \ge \frac{D_i - 1}{2p_i} \right) \\
&\ge 1 - \frac{ \mathbb{E}[\#\mathrm{SFO}_i] }{ \dfrac{D_i - 1}{2p_i} } \\
&= 1 - \frac{ \dfrac{1}{64 \Delta_0 \ell a} \cdot \dfrac{\Delta \bar{L} \bar{\sigma}_{2/3}}{\epsilon^3} \cdot c_i }
{ \dfrac{ \sqrt{m} \Delta \bar{L} }{ 8 \Delta_0 \ell \epsilon^2 } 
\left( \dfrac{ \bar{\sigma}_{2/3} }{ \sigma_i } \right)^{2/3}
c_i \cdot \dfrac{ (\sigma_i^2 \bar{\sigma}_{2/3})^{1/3}  }
{ 4 \sqrt{m} a \epsilon } } \\
&= \frac{1}{2},
\end{align*}
where the first inequality follows from Markov’s inequality.
Therefore, to ensure $\mathbb{E}\left[\|\nabla f(x)\|^{2}\right] \le \epsilon^{2}$, the number of samples must be at least $\Omega(\Delta \bar{L} \bar{\sigma}_{2/3} \epsilon^{-3})$.

For the second term in Theorem~\ref{thm_4.2}, we construct the following functions:
\begin{align*}
    f_i(x, \xi_i) = \frac{\sqrt{m} \bar{L}}{2} \left( \left\| U_i x \right\|^2 
- 2 \xi_i \left[ U_i x \right]_1 + r^2 \right),
\end{align*}
where $r \in \left( 0, \sqrt{ \frac{2\Delta }{\sqrt{m} \bar{L}} } \right), \quad 
\xi_i \sim \mathcal{P}^{s}_{\xi_i} := \mathcal{N} \left( r s, \frac{ \sigma_i^2}{m \bar{L}^2} \right), \quad
s \in \{-1, 1\}.$

Let $\theta_s = [r s, 0, \ldots, 0]^\top \in \mathbb{R}^d$, and $f_i^s(x) = \mathbb{E}_{\xi_i} \left[ f_i(x, \xi_i) \right] = \frac{\sqrt{m} \bar{L}}{2} \left\| U_i x - \theta_s \right\|^2$. 
It is straightforward to verify that
\begin{align*}
f(0) - \inf_x f(x) 
&= \frac{1}{m} \sum_{i=1}^m f_i(0) - \inf_x \frac{1}{m} \sum_{i=1}^m f_i(x) \\
&\le \frac{1}{m} \sum_{i=1}^m \left( f_i(0) - \inf_x f_i(x) \right) \\
&= \frac{1}{m} \sum_{i=1}^m \frac{\sqrt{m} \bar{L}}{2} r^2 \\
&\le \Delta.
\end{align*}
Since
\begin{align*}
\mathbb{E}_{\xi_i} \left[ \nabla_x f_i(x, \xi_i) - \nabla f_i(x) \right]
&= \sqrt{m} \bar{L} \, \mathbb{E}_{\xi_i} \left[ (\xi_i - rs) \right] \, U_i^\top e_i = 0, 
\end{align*}
and
\begin{align*}
\mathbb{E}_{\xi_i} \left[ \left\| \nabla_x f_i(x, \xi_i) - \nabla f_i(x) \right\|^2 \right]
&= m \bar{L}^2 \, \mathbb{E}_{\xi_i} \left[ (\xi_i - rs)^2 \right] \left\| U_i^\top e_i \right\|^2 = \sigma_i^2,
\end{align*}
the constructed function satisfies Assumption~\ref{ass_2.2}. 
Moreover, 
\begin{align*}
\frac{1}{m} \sum_{i=1}^m \mathbb{E}_{\xi_i} \left[ \left\| \nabla f_i(x, \xi_i) - \nabla f_i(y, \xi_i) \right\|^2 \right]
&= \frac{1}{m} \sum_{i=1}^m \mathbb{E}_{\xi_i} \left[ \left\| \sqrt{m} \bar{L} U_i^\top (U_i x - U_i y) \right\|^2 \right] \\
&= \sum_{i=1}^m \bar{L}^2 \left\| U_i^\top (U_i x - U_i y) \right\|^2 \\
&= \sum_{i=1}^m \bar{L}^2 \left\| U_i x - U_i y \right\|^2 \\
&= \bar{L}^2 \left\| x - y \right\|^2.
\end{align*}
so it also satisfies Assumption~\ref{ass_2.5}.

For node $i$, define $\hat{s} = \arg\min_{s' \in \{1, -1\}} \left\| \nabla f_i^{s'}(x) \right\|$. When $\hat{s} \ne s$, we have
\begin{align*}
\left\| \nabla f_i^{s}(x) \right\| 
&\ge \frac{1}{2} \inf_x \left\{ \left\| \nabla f_i^{1}(x) \right\| + \left\| \nabla f_i^{-1}(x) \right\| \right\} \\
&= \sqrt{m} \bar{L} \inf_x \left\{ \left\| U_i x -  \theta_1 \right\| + \left\| U_i x -  \theta_{-1} \right\| \right\} \\
&\ge \sqrt{m} \bar{L} \left\|  \theta_1 - \theta_{-1} \right\| \\
&= \sqrt{m} \bar{L} r.
\end{align*}
By Markov's inequlity, we further have
\begin{align}
\label{eq_vrlb_1}
\mathbb{E} \left[ \left\| \nabla f_i^s(x) \right\| \right] 
&\ge \sqrt{m} \bar{L} r \cdot \mathbb{P} \left( \left\| \nabla f_i^s(x) \right\| \ge \sqrt{m} L r \right) \notag\\
&\ge \sqrt{m} \bar{L} r \cdot \mathbb{P} \left( \hat{s} \ne s \right)
\end{align}

For node $i$, letting $\mathbb{P}_s^D = \mathcal{N}^{\otimes D} \left( r s, \frac{ \sigma_i^2}{ m \bar{L}^2} \right)$, it holds that
\begin{align*}
\mathbb{P}(\hat{s} \ne s) &\ge \frac{1}{2} - \frac{1}{2} \left\| \mathbb{P}_1^D - \mathbb{P}_{-1}^D \right\|_{\mathrm{TV}} \\
&\ge \frac{1}{2} - \frac{1}{2} \sqrt{ \frac{1}{2} D_{\mathrm{KL}}(\mathbb{P}_1^D \, \| \, \mathbb{P}_{-1}^D) } \\
&= \frac{1}{2} \left( 1 - \frac{\sqrt{m} \bar{L} r \sqrt{D}}{\sigma_i} \right)
\end{align*}

Combining with \eqref{eq_vrlb_1}, this yields
\begin{align*}
\mathbb{E} \left[ \left\| \nabla f_i^s(x) \right\| \right]
\ge \frac{\sqrt{m}}{2} \, \bar{L} r \left( 1 - \frac{ \sqrt{m} \bar{L} r \sqrt{D} }{ \sigma_i } \right).
\end{align*}

Letting $r = \min \left\{ \frac{ \sigma_i }{ 2 \sqrt{m} \, \bar{L} \sqrt{D} }, \; \sqrt{ \frac{ 2\Delta }{ \sqrt{m} \bar{L} } } \right\}$ and
$\epsilon \le \sqrt{\frac{\sqrt{m}\Delta \bar{L}}{8}} \frac{\bar{\sigma}_{\text{AM}}}{\sigma_i}\sqrt{c_i}$, we have
\begin{align*}
\mathbb{E} \left[ \left\| \nabla f_i^s(x) \right\| \right] 
&\ge \min \left\{ \frac{\sigma_i}{8 \sqrt{D}}, \; \sqrt{\frac{ \sqrt{m} \Delta \bar{L} }{8}} \right\} \\
&\ge \min \left\{ \frac{\sigma_i}{8 \sqrt{D}}, \; \frac{\sigma_i}{\bar{\sigma}_{\text{AM}}\sqrt{ c_i }} \, \epsilon \right\}.
\end{align*}

Therefore, for node $i$, at least $\dfrac{\bar{\sigma}_{\text{AM}^2}}{64 m \epsilon^2} c_i$ samples are required to ensure $\mathbb{E} \left[ \left\| \nabla f_i^s(x) \right\| \right] \le \dfrac{\sigma_i}{\bar{\sigma}_{\text{AM}}\sqrt{ c_i }} \epsilon$.

If the number of total samples $\#\text{Sample}\le \dfrac{ \bar{\sigma}_{\text{AM}}^2}{128\epsilon^2}$, then
\begin{align*}
\mathbb{E} \left[ \left\| \nabla f(x) \right\|^2 \right] 
&= \mathbb{E} \left[ \left\| \frac{1}{m} \sum_{i=1}^m \nabla f_i(x) \right\|^2 \right] \\
&= \frac{1}{m^2} \sum_{i=1}^m \mathbb{E} \left[ \left\| \nabla f_i(x) \right\|^2 \right] \\
&\ge \frac{1}{m^2} \sum_{i=1}^m \left( \mathbb{E} \left[ \left\| \nabla f_i(x) \right\| \right] \right)^2 \\
&\ge \frac{1}{m^2} \sum_{i=1}^m \frac{\sigma_i^2}{\bar{\sigma}_{\text{AM}}^2 c_i } \epsilon^2 \\
&\ge \epsilon^2,
\end{align*}
where the last inequality holds by Cauchy–Schwarz inequality.

Therefore, we obtain the second term $\Omega\left( \bar{\sigma}_{\text{AM}}^2\epsilon^{-2}\right)$ in the lower bound on sample complexity in Theorem~\ref{thm_4.2}.

For the third and fourth terms in Theorem~\ref{thm_4.2}, which are independent of the variance, the construction of stochastic gradients is not required. These terms reduce to the lower bounds for variance reduction methods in the deterministic setting. According to the deterministic lower bounds for nonconvex finite-sum problems established by \citet{zhou2019lower}, the sample complexity is lower bounded by $\Omega\left(\sqrt{m}\Delta\bar{L}\epsilon^{-2} + m\right)$.
\end{proof}

\end{document}